\date{}
\def\dim{\operatorname{dim}}
\theoremstyle{plain}
\newtheorem{theorem} {Theorem} [section]
\newtheorem{lemma} [theorem]{Lemma}
\newtheorem{proposition}[theorem]{Proposition}
\newtheorem{corollary} [theorem]{Corollary}
\theoremstyle{definition}
\newtheorem{definition} [theorem] {Definition}
\newtheorem{example}[theorem] {Example}
  \title[Local holomorphic mappings respecting homogeneous subspaces]{Local holomorphic mappings respecting homogeneous subspaces on rational homogeneous spaces}
 \author[J. Hong]{Jaehyun Hong}
 \address{Center for Mathematical Challenges, Korea Institute for  Advanced  Study, 85 Hoegiro, Dongdaemun-gu, Seoul 02455, Korea}
  \email{jhhong00@kias.re.kr}
  \author[S.-C. Ng]{Sui-Chung Ng}
 \address{School of Mathematical Sciences, Shanghai Key Laboratory of PMMP, East China Normal University, Shanghai, China}
  \email{scng@math.ecnu.edu.cn}
\keywords{analytic extension, rational homogeneous spaces, cycles, real group orbits, flag domains}
\begin{document}

\begin{abstract}
Let $G/P$ be a rational homogeneous space (not necessarily irreducible) and $x_0\in G/P$ be the point at which the isotropy group is $P$. The $G$-translates of the orbit $Qx_0$ of a parabolic subgroup $Q\subsetneq G$ such that $P\cap Q$ is parabolic are called \textit{$Q$-cycles}. We established an extension theorem for local biholomorphisms on $G/P$ that map local pieces of $Q$-cycles into $Q$-cycles. We showed that such maps extend to global biholomorphisms of $G/P$ if $G/P$ is $Q$-cycle-connected, or equivalently, if there does not exist a non-trivial parabolic subgroup containing $P$ and $Q$. Then we applied this to the study of local biholomorphisms preserving the real group orbits on $G/P$ and showed that such a map extend to a global biholomorphism if the real group orbit admits a \textit{non-trivial holomorphic cover} by the $Q$-cycles. The non-closed boundary orbits of a bounded symmetric domain embedded in its compact dual are examples of such real group orbits. Finally, using the results of Mok-Zhang on Schubert rigidity, we also established a Cartan-Fubini type extension theorem pertaining to $Q$-cycles, saying that if a local biholomorphism preserves the variety of tangent spaces of $Q$-cycles, then it extends to a global biholomorphism when the $Q$-cycles are positive dimensional and $G/P$ is of Picard number 1. This generalizes a well-known theorem of Hwang-Mok on minimal rational curves.
\end{abstract}

  \maketitle

\section{Introduction}

\subsection{Extension theorem for $Q$-cycles-respecting maps} $\,$

Starting from the late 90s, Hwang and Mok has developed a theory to study Fano manifolds of Picard number 1 using their rational curves. A number of difficult problems in Algebraic Geometry have been solved using the theory, for example, those related to deformation rigidity of rational homogeneous spaces~\cite{HM98, HM05}, Lazarsfeld's problem~\cite{HM99} and target rigidity~\cite{FuHwang}.
The theory is based on a special kind of rational curves, called   \textit{minimal rational curves}, in which the word ``minimal" means the degree of the rational curve with respect to some choice of ample line bundle is minimal among  free rational curves.
One of the basic themes of the theory is the rigidity phenomena of the holomorphic mappings that interact nicely with minimal rational curves. To make things more concrete, it suffices for us to mention the so-called \textit{Cartan-Fubini type extension}, which is a key ingredient in Hwang-Mok's theory and its application to geometric problems. In the case of rational homogeneous spaces of Picard number 1 other than $\mathbb P^n$, an equidimensional version of the extension says that if a local biholomorphism preserves the set of tangent directions of minimal rational curves, then it extends to a global biholomorphism. There are generalizations to general uniruled projective manifolds~\cite{HM01} and also non-equidimensional situations~\cite{HongMok10, Hwang15}.

Roughly speaking, there are two components in the proofs for Cartan-Fubini type extensions mentioned above. Firstly, one proves that if a local holomorphic map preserves the set of tangent directions of minimal rational curves, then it actually maps pieces of minimal rational curves into minimal rational curves. We will say that such maps \textit{respect minimal rational curves}. Then one proves an extension statement for the maps that respect minimal rational curves.
In this article, we are going to study such extension phenomena for local biholomorphisms that respect homogeneous submanifolds (or cycles) of arbitrary dimension, in the case of  rational homogeneous spaces of arbitrary Picard number. For maps that are only known to respect tangencies, our results together with those from the study of Schubert rigidity  will also lead to rigidity statements for spaces of Picard number 1, which are analogous to the usual Cartan-Fubini type extensions. This will be addressed at the end of this section.
Before stating our main results, we would like to point out that this kind of cycle-respecting properties appear naturally and ubiquitously in some   areas in Several Complex Variables, e.g. in the study of proper holomorphic mappings among symmetric domains and holomorphic mappings that preserves real group orbits on rational homogeneous spaces. For example, since the work of Mok-Tsai~\cite{MokTsai} it is now well known that any proper holomorphic map between bounded symmetric domains respects symmetric subspaces of a particular kind; and in~\cite{NG12} it was discovered that in many cases a holomorphic mappings preserving the closed orbits of $SU(p,q)$ on Grassmannians respect certain subgrassmannians.

From now on, we let $G/P$ be a rational homogeneous space, where $G$ is a semisimple complex Lie group and $P\subset G$ is a parabolic subgroup. Let $x\in G/P$ be the base point at which the isotropy group is $P$ and let $Q\subset G$ be another parabolic subgroup such that $P\cap Q$ is parabolic. Then the orbit $Qx$ of $Q$ (under canonical left action) is a rational homogeneous subspace of $G/P$. This orbit and any of its translations by   elements of $G$ will be called a \textit{$Q$-cycle}. A $Q$-cycle is non-trivial (not a point nor equal to $G/P$) if $Q\not\subset P$ and $Q\neq G$.
We remark that when $G/P$ is of Picard number 1 and is associated to a long root, then the minimal rational curves on $G/P$  are the $Q$-cycles for some $Q\subset G$. In the Cartan-Fubini type extensions established in the literature for minimal rational curves, the condition on Picard number (equal to 1) is always assumed for the source manifold, because otherwise two general points in the source manifold cannot be connected by a chain of minimal rational curves, which is certainly an obstruction for extension. In our situation regarding the $Q$-cycles in $G/P$, we say that $G/P$ is \textit{$Q$-cycle-connected} if any two points can be connected by a chain of $Q$-cycles and we have the following simple criterion for $Q$-cycle-connectivity which is very easy to check on the Dynkin diagram:

\noindent\textbf{$Q$-cycle-connectivity.} (see Proposition~\ref{connectivity})
\textit{
$G/P$ is $Q$-cycle-connected if and only if there does not exist any parabolic subgroup $R\subsetneq G$ containing $P$ and $Q$, or equivalently, when the set of marked nodes on the Dynkin diagram associated to $P$ and $Q$ are disjoint.
}

A local biholomorphism defined on $G/P$ will be said to be \textit{$Q$-cycle-respecting} if it maps pieces of $Q$-cycles into $Q$-cycles (for the precise definition, see Section~\ref{sheaf}). Now we are able to state our main result regarding the extension of the germs of such maps:

\begin{theorem}\label{mainthm1}
Let $G/P$ be a rational homogeneous space and $Q\subsetneq G$ be a parabolic subgroup such that $P\cap Q$ is parabolic. If $G/P$ is $Q$-cycle-connected, or equivalently, if there does not exist any parabolic subgroup $R\subsetneq G$ containing $P$ and $Q$, then every germ of $Q$-cycle-respecting map on $G/P$ extends to a biholomorphism of $G/P$.
\end{theorem}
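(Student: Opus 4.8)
The plan is to continue the given germ along $Q$-cycles until it is defined on all of $G/P$, and then to show that the resulting global map is a biholomorphism. Throughout, write $x_0$ for the base point and let $f\colon U\to U'$ be a biholomorphism between open neighborhoods of $x_0$ and $f(x_0)$ representing a $Q$-cycle-respecting germ. By definition $f$ carries the germ at each point of $U$ of every $Q$-cycle through that point into a uniquely determined $Q$-cycle; consequently $f$ induces an injective germ of holomorphic map $\widetilde f$ on the parameter space $\mathcal K$ of $Q$-cycles, which is itself a rational homogeneous space $G/\widehat Q$ for a parabolic $\widehat Q\supseteq Q$, and we have the double fibration $G/P\xleftarrow{\ \mu\ }\mathcal I\xrightarrow{\ \nu\ }\mathcal K$ through the incidence variety $\mathcal I=\{(x,[C])\in G/P\times\mathcal K:x\in C\}$, with $\mu(\nu^{-1}([C]))$ the cycle $C$ and $\nu(\mu^{-1}(x))$ the subvariety $\check x\subset\mathcal K$ of cycles passing through $x$.

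The technical core is a propagation statement: \emph{if a $Q$-cycle-respecting germ extends holomorphically to an open set meeting a $Q$-cycle $C$, then it extends holomorphically, and again as a $Q$-cycle-respecting local biholomorphism, to an open neighborhood of all of $C$}. The delicate point is that $f|_C$ is only a local biholomorphism between open pieces of the rational homogeneous spaces $C$ and $f(C)$, and such maps do not extend on their own; one has to use the whole moving family of $Q$-cycles that meet $C$. Since $f$ carries the cycles meeting $C$ to cycles meeting $f(C)$ and preserves the way they are attached along $C$ (their tangent spaces at the points of $C$), the rigidity established in the earlier sections forces $f|_C$ to extend to a biholomorphism $C\to f(C)$; once $f$ is defined along all of $C$, the transversal directions are filled in using the cycles through the points of $C$, and the identity theorem ensures the local extensions agree on overlaps. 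I expect this propagation statement — or rather the rigidity underlying it, where the homogeneity of $\mathcal K$ is used essentially — to be the main obstacle.

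Granting it, globalization goes as follows. Starting from $U$ and repeatedly applying the propagation statement to the $Q$-cycles meeting the current domain produces an increasing chain of open sets to which $f$ extends holomorphically. Here the hypothesis enters decisively: by Proposition~\ref{connectivity}, $Q$-cycle-connectivity means exactly that no proper parabolic subgroup contains both $P$ and $Q$, equivalently that chains of $Q$-cycles emanating from $x_0$ reach every point of $G/P$; since the $Q$-cycles through the points of a proper open subset of $G/P$ cannot all be contained in that subset, the exhaustion terminates with all of $G/P$. The continuation along a fixed chain of cycles is pinned down at each step by the identity theorem, so by a standard monodromy argument together with the simple-connectivity of $G/P$ (every smooth projective rational variety is simply connected), the continuation is single-valued. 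We thus obtain a holomorphic map $F\colon G/P\to G/P$ with $F|_U=f$.

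Finally, $F$ is a local biholomorphism at every point, being everywhere locally a continuation of the biholomorphism $f$, so $dF$ is nowhere degenerate and $F$ is an open map. As $G/P$ is compact, $F$ is a proper local diffeomorphism, hence a finite unramified covering of the connected manifold $G/P$ onto itself; and since $G/P$ is simply connected this covering has degree one. Therefore $F$ is a biholomorphism of $G/P$ extending the germ $f$, which gives the theorem.
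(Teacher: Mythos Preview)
Your outline is reasonable in spirit, but it leaves open precisely the two steps that carry all the weight, and the paper's argument differs from yours at exactly those points.

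\textbf{The propagation step.} You state the propagation lemma and immediately concede it is ``the main obstacle,'' offering only a heuristic (``rigidity established in the earlier sections'') in place of a proof. The paper's corresponding result (Proposition~\ref{firstextension}) is not established by any abstract rigidity; it proceeds by realizing $\mathcal U=G/(P\cap Q)$ simultaneously as a closed submanifold of $Gr(k,T(G/P))$ and of $Gr(\ell,T(G/Q))$, so that the differential $[df]$ of a representative $f$ induces a local biholomorphism $f^\sharp$ on $G/Q$, and then $[df^\sharp]$ extends $[df]$ to the full $\mathbf q$-saturated set $\mathbf q^{-1}(\mathbf q(\mathbf p^{-1}(V_1)))$. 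Moreover, what this produces is \emph{not} a single-valued holomorphic map on an open neighborhood of $C$ in $G/P$, as your formulation asserts, but a biholomorphism on an open subset of $\mathcal U$ together with a section of the inverse-image sheaf $\mathbf p^{-1}\mathscr B$. Descending this to $G/P$ already raises the single-valuedness problem, because the fibers of $\mathbf p$ over the enlarged domain need not be connected.

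\textbf{The monodromy step.} Even granting your propagation statement, the appeal to ``standard monodromy $+$ simple connectivity of $G/P$'' is not justified. The monodromy theorem requires that the germ admit analytic continuation along \emph{every} continuous path in $G/P$; only then does $\pi_1(G/P)=0$ force single-valuedness. Your propagation yields continuation along chains of $Q$-cycles, with extensions to open neighborhoods whose size depends on the germ at each step. You give no argument that these neighborhoods cover an arbitrary path, nor that two $Q$-cycle chains from $x_0$ to a point $x$ yield the same continued germ; simple connectivity of $G/P$ does not resolve the latter unless one already knows continuation along every path of a homotopy between the two chains.

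\textbf{How the paper handles this.} The paper avoids the descent-and-monodromy problem altogether by never working on $G/P$ until the last step. It iterates Proposition~\ref{firstextension} to build, for each $k$, a biholomorphism $F_k\colon\mathcal T^k_{x_0}\to\mathcal T^k_{\mathfrak f(x_0)}$ on the $k$-th $Q$-tower together with a global section $\mathfrak F_k\in(\mathbf p_k^{-1}\mathscr B)(\mathcal T^k_{x_0})$; these are single-valued \emph{by construction}. Once $\mathbf p_N\colon\mathcal T^N_{x_0}\to G/P$ is surjective, the map $\mathbf p_N\circ F_N$ descends to $G/P$ precisely because the fibers of $\mathbf p_N$ are connected (Proposition~\ref{fiber connectedness}). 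That fiber-connectivity is itself a non-trivial statement, proved via Stein factorization and the simple connectivity of the unique open $P$-orbit in $G/P$ (Lemma~\ref{simply connectedness}), which in turn requires a case analysis on the Dynkin diagram. This is the exact substitute for your monodromy argument, and it is where the genuine difficulty of the theorem resides.
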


Comparing with the previous works on Cartan-Fubini type extension, we first of all do not need to impose the Picard number 1 condition and in particular, our  theorem applies also to reducible rational homogeneous spaces. As mentioned, the usual Picard number 1 condition was assumed in order to get cycle-connectivity by minimal rational curves. Secondly, in these studies, after establishing the algebraic extension of the concerned local map, the Picard number 1 condition and the fact that the cycles are one-dimensional had also been used in an essential way to prove the univalence and holomorphicity of the extension. Here in our theorem, the cycle-connectivity is taken as the only hypothesis, which has an equivalent group-theoretic condition and can be easily verified.

The novelty of our method is that instead of establishing the extension of a $Q$-cycle-respecting germ $\mathfrak f$ directly on $G/P$ (or on the graph of $\mathfrak f$), in which one usually only obtains \textit{meromorphic} maps, we use the germ $\mathfrak f$ to construct a sequence of \textit{holomorphic} maps $\{F_k\}$ from a sequence of projective manifolds $\{\mathcal T^k_{x_0}\}$, called the \textit{$Q$-towers}, to the target space. We want to emphasize that the maps $\{F_k\}$ are holomorphic \textit{by construction} and thus we do not need to deal with indeterminacies nor essential singularities of meromorphic maps. Geometrically, the $Q$-towers can be interpreted as some sort of universal families of the chains of $Q$-cycles emanating from $x_0$ and there are evaluation maps ${\bf p}_k:\mathcal T^k_{x_0}\rightarrow G/P$ sending the towers to their images in $G/P$ (see~\cite{MokZhang} for the case of rational curves). The $Q$-cycle-connectivity hypothesis then implies that for some positive integer $N$, the evaluation map ${\bf p}_N:\mathcal T^N_{x_0}\rightarrow G/P$ is surjective. The second part of our proof is then to show that $F_N$ actually descends to a holomorphic map $F:G/P\rightarrow G/P$, by exploiting the theory of $P$-action on $G/P$.

\subsection{Local biholomorphisms preserving real group orbits on $G/P$} $\,$\

We then apply Theorem~\ref{mainthm1} to the study of holomorphic maps pertaining to the real group orbits on $G/P$. We recall first of all that a real Lie subgroup $G_0\subset G$ is called a real form of $G$ if the complexification of its Lie algebra $\mathfrak g_0$ is the Lie algebra $\mathfrak g$ of $G$. Wolf has laid down the foundation of the action of $G_0$ on $G/P$ in~\cite{W69}. It is now well known that $G_0$ has only a finite number of orbits on $G/P$, which will be called \textit{real group orbits}, including in particular open orbit(s) and a unique closed orbit. Any open real group orbit is also called a \textit{flag domain} in the literature. The bounded symmetric domains embedded in their compact duals are the most well known examples of flag domains. In Several Complex Variables, the study of proper holomorphic maps on domains is a classical topic and one is naturally led to consider local holomorphic maps that respect the boundary structures of the domains. In the case of open real group orbits  (i.e. flag domains), their boundary is again a union of real group orbits which are also homogeneous CR submanifolds in $G/P$. Thus, local holomorphic maps preserving real group orbits (or CR maps between real group orbits) arise very naturally in Several Complex Variables.

Our key observation here is that, in many cases a real group orbit $\mathcal O\subset G/P$ can be covered by a family of $Q$-cycles in a very special way that $\mathcal O$ and these $Q$-cycles are ``tangled" under the holomorphic mappings from $\mathcal O$ or into $\mathcal O$. For the precise definition, see Definition~\ref{holomorphic cover}. In such cases, we will say that $\mathcal O$ has a \textit{holomorphic cover of $Q$-type} and the cover is said to be \textit{non-trivial} if the covering $Q$-cycles are neither points nor the entire $G/P$.  Here we simply remark that
the non-closed boundary orbits of a bounded symmetric domain embedded in its compact dual are examples of real group orbits having such kind of non-trivial covers. For more examples, see Example~\ref{example}. Our main result for the real group orbits on a rational homogeneous space $G/P$ can now be stated as follows.

\begin{theorem} \label{theorem extension of subdiagram type}
Let $\mathcal O$ be a real group orbit on $G/P$ and $U\subset G/P$ be a connected open set such that $U\cap\mathcal O\neq\emptyset$. Suppose $\mathcal O$ has a non-trivial holomorphic cover of $Q$-type for some parabolic subgroup $Q\subset G$ and $G/P$ is $Q$-cycle-connected. If $f:U\rightarrow f(U)\subset G/P$ is a biholomorphism such that $f(U\cap\mathcal O)\subset\mathcal O$, then $f$ extends to a biholomorphism of $G/P$.
\end{theorem}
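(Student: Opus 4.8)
The plan is to reduce Theorem~\ref{theorem extension of subdiagram type} to Theorem~\ref{mainthm1} by showing that the hypothesis $f(U\cap\mathcal O)\subset\mathcal O$ forces $f$ to be $Q$-cycle-respecting. The guiding principle is the ``tangled'' nature of the holomorphic cover of $Q$-type: by Definition~\ref{holomorphic cover}, each covering $Q$-cycle meets $\mathcal O$ in a large set (generically, in a real hypersurface-type locus, or in any case a maximal totally real / maximally non-degenerate piece) in such a way that a $Q$-cycle is \emph{determined} by its intersection with $\mathcal O$, and conversely $\mathcal O$ is swept out by such $Q$-cycles. First I would fix a point $p\in U\cap\mathcal O$ and pick a $Q$-cycle $C$ from the cover passing through $p$; the intersection $C\cap\mathcal O$ near $p$ is a real-analytic (CR) submanifold of $C$ of the prescribed type. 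Since $f(U\cap\mathcal O)\subset\mathcal O$, the image $f(C\cap U\cap\mathcal O)$ lies in $\mathcal O$, and the point is that $f(C\cap U)$ is a germ of a complex submanifold of $G/P$ containing a piece of $\mathcal O$ of that same special type.

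The heart of the matter is then a local-to-global rigidity statement \emph{for a single $Q$-cycle}: a germ of complex submanifold of $G/P$ that contains a sufficiently non-degenerate real-analytic piece of $\mathcal O$ must in fact be (an open piece of) a $Q$-cycle. Concretely, one fixes the $Q$-cycle $C$, and uses that $C\cap\mathcal O$ is a CR submanifold whose CR structure, together with the embedding data, pins down $C$ among all complex submanifolds of the relevant dimension — this is exactly the content of ``$\mathcal O$ and the $Q$-cycles are tangled under holomorphic maps'' in Definition~\ref{holomorphic cover}. Granting this, $f(C\cap U)$ extends to a $Q$-cycle $C'$; letting $p$ and $C$ vary over the cover, we conclude that $f$ maps (pieces of) every $Q$-cycle of the cover into a $Q$-cycle. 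Since the $Q$-cycles of the cover are $G$-translates of $Qx$, and any $Q$-cycle meeting $U$ is (after shrinking) a limit/deformation within the family generated by those through points of $U\cap\mathcal O$ — here one uses that $\mathcal O$ is open in its real-analytic closure inside $U$, or more precisely that $U\cap\mathcal O$ is a totally real-rich set so that $Q$-cycles through $\mathcal O$-points are Zariski-dense in the family of $Q$-cycles through $U$ — we get that $f$ is $Q$-cycle-respecting on all of $U$ in the sense of Section~\ref{sheaf}.

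With $f$ shown to be a $Q$-cycle-respecting germ and $G/P$ assumed $Q$-cycle-connected, Theorem~\ref{mainthm1} applies directly and yields that $f$ extends to a biholomorphism $F$ of $G/P$. It remains only to observe that this $F$ is compatible with the original hypothesis, but that is automatic: $F$ agrees with $f$ on $U$, hence $F(U\cap\mathcal O)\subset\mathcal O$, and no further condition is imposed. One small technical point to dispatch is that $U\cap\mathcal O$ is nonempty and that one can choose covering $Q$-cycles through \emph{every} point of $U\cap\mathcal O$ (not merely through a dense subset), which follows from the homogeneity of $\mathcal O$ under $G_0$ and the definition of the cover; and a second point is to make sure $f(C\cap U)$ is positive-dimensional and genuinely contains enough of $\mathcal O$ to invoke the rigidity — this uses that $f$ is a biholomorphism, so dimensions and non-degeneracy are preserved.

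The main obstacle I anticipate is precisely the single-cycle rigidity step: proving that a germ of complex submanifold carrying a CR-piece of $\mathcal O$ must be a $Q$-cycle. For the motivating case — non-closed boundary orbits of a bounded symmetric domain in its compact dual — this should follow from the theory of such boundary components (each is foliated by boundary faces which are themselves lower-rank symmetric spaces sitting inside $Q$-cycles, as in Wolf's and Mok–Tsai's analysis), but in the general setting it must be \emph{built into} Definition~\ref{holomorphic cover} as the defining property of a ``holomorphic cover of $Q$-type''. Thus much of the real work is front-loaded into that definition, and the proof of Theorem~\ref{theorem extension of subdiagram type} becomes the clean implication: non-trivial holomorphic cover of $Q$-type $+$ orbit-preserving $\Rightarrow$ $Q$-cycle-respecting $\Rightarrow$ (by $Q$-cycle-connectivity and Theorem~\ref{mainthm1}) global biholomorphism.
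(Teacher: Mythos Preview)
Your overall strategy is correct and matches the paper's: show that $f$ is $Q$-cycle-respecting and then invoke Theorem~\ref{mainthm1}. You also correctly recognize that the ``single-cycle rigidity'' is essentially built into Definition~\ref{holomorphic cover}(ii): applying it to $h=f|_{\mathcal P_s\cap U}$ gives $f(\mathcal O_s\cap U)\subset\mathcal O_t\subset\mathcal P_t$ directly, and then the passage to $f(\mathcal P_s\cap U)\subset\mathcal P_t$ is a one-line identity-theorem argument (Proposition~\ref{holomorphic function vanishing on real points}, using that $\mathcal O_s$ is a real group orbit on $\mathcal P_s$ and hence a uniqueness set). Your CR-rigidity framing is more elaborate than necessary here; the paper simply uses that real group orbits are generic in the sense of Lemma~\ref{subvariety lemma}.

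The genuine gap is in your passage from ``$f$ respects the $Q$-cycles $\mathcal P_s$ for $s\in\mathcal S$'' to ``$f$ respects \emph{all} $Q$-cycles meeting $U$''. Your ``Zariski-dense / limit of deformations'' sketch is pointing in the right direction but is not a proof as stated: the $Q$-cycles through points of $U\cap\mathcal O$ are not, a priori, the same thing as the $Q$-cycles indexed by $\mathcal S$, and in any case density of a real-analytic family in a complex parameter space does not by itself propagate a closed holomorphic condition. The paper's mechanism is concrete: embed $\mathcal U=G/(P\cap Q)$ into the Grassmannian bundle $Gr(k,T(G/P))$ (Proposition~\ref{identification}), so that the $Q$-cycle-respecting property of $f$ becomes the pair of holomorphic conditions (a) $[df](\mathbf p^{-1}(U))\subset\mathbf p^{-1}(V)$ and (b) $[df]$ is $\mathbf q$-fiber-preserving. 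Both (a) and (b) are verified for parameters $s\in\mathcal S$ by the first step, and then extended to all $s$ by applying Proposition~\ref{holomorphic function vanishing on real points} a second time, now on $G/Q$, using that $\mathcal S$ is itself a real group orbit there. This double use of the uniqueness-set property --- once on each leg of the double fibration --- is the key technical device you are missing.
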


As an illustration of Theorem~\ref{theorem extension of subdiagram type}, we specialize it for the case of bounded symmetric domains (which are allowed to be reducible):

\begin{corollary}
Let $M$ be a compact Hermitian symmetric space and $\Omega\subset M$ be the Borel embedding of the bounded symmetric domain $\Omega$ dual to $M$. Let $G_0:=Aut(\Omega)\subset Aut(M)$ and $\mathcal O\subset\partial\Omega$ be $G_0$-orbit which is neither open nor closed, and $U\subset M$ be a connected open set such that $U\cap\mathcal O\neq\emptyset$. If $f:U\rightarrow f(U)\subset M$ is a biholomorphism such that $f(U\cap\mathcal O)\subset\mathcal O$, then $f$ extends to a biholomorphism of $M$.
\end{corollary}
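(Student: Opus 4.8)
The plan is to derive the corollary from Theorem~\ref{theorem extension of subdiagram type}. Concretely, given a $G_0$-orbit $\mathcal O\subset\partial\Omega$ that is neither open nor closed, I would produce a parabolic subgroup $Q\subset G$ for which $\mathcal O$ carries a non-trivial holomorphic cover of $Q$-type and $G/P$ is $Q$-cycle-connected; once these two facts are in place, Theorem~\ref{theorem extension of subdiagram type} applies verbatim. I describe the argument when $M$ is irreducible; the reducible case is similar.

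First I would fix the flag picture: write $M=G/P$, where $G$ is the connected complex automorphism group of $M$ and $P$ is the isotropy parabolic at a base point $x_0$; for $M$ irreducible, $P$ is the maximal parabolic with abelian nilradical, and its unique marked node $\alpha$ is the cominuscule one. Then $G_0=\Aut(\Omega)$ is a real form of $G$ of Hermitian type, $\Omega$ is its unique open orbit on $M$ (a flag domain), and by Wolf's theory~\cite{W69} the $G_0$-orbits contained in $\overline\Omega$ form a chain $\Omega=\mathcal O_0,\mathcal O_1,\dots,\mathcal O_r$ with $r=\rank\Omega$, in which $\mathcal O_r$ is the unique closed orbit (the Shilov boundary) and $\mathcal O_1,\dots,\mathcal O_{r-1}$ are exactly the orbits that are neither open nor closed. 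Thus $\mathcal O=\mathcal O_k$ for some $1\le k\le r-1$.

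The main step is producing the holomorphic cover. Through each point $p\in\mathcal O_k$ runs the boundary component $F_p$ of $\Omega$ containing $p$. By the structure theory of boundary components, $F_p$ is a bounded symmetric domain of rank $r-k\ge 1$, it is the holomorphic arc component of $\mathcal O_k$ through $p$, and inside $M$ it is realized as an open subset of its compact dual $\check F_p$, a rational homogeneous (Hermitian symmetric) submanifold of $M$. Since all boundary components lying in a fixed $\mathcal O_k$ are $G_0$-conjugate, their compact duals $\check F_p$ form a single $G$-orbit of submanifolds, hence are the $Q$-cycles for one parabolic $Q\subset G$ with $P\cap Q$ parabolic (a $Q$-cycle being, by construction, a rational homogeneous subspace); the marked nodes of $Q$ are read off the Dynkin diagram from the combinatorial data of the boundary components of $\mathcal O_k$. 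Then $\{\check F_p:p\in\mathcal O_k\}$ covers $\mathcal O_k$, each intersection $\check F_p\cap\mathcal O_k$ equals $F_p$ and is therefore open in $\check F_p$, and since boundary components are precisely the holomorphic arc components, any biholomorphism sending $U\cap\mathcal O_k$ into $\mathcal O_k$ sends open pieces of these $Q$-cycles onto open pieces of $Q$-cycles. This is exactly the assertion that $\mathcal O_k$ carries a holomorphic cover of $Q$-type in the sense of Definition~\ref{holomorphic cover}, and the cover is non-trivial because $r-k\ge 1$ makes $\check F_p$ positive-dimensional while $k\ge 1$ forces $\check F_p\ne M$.

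It then remains to verify that $G/P$ is $Q$-cycle-connected, which by Proposition~\ref{connectivity} amounts to the marked nodes of $P$ and of $Q$ being disjoint. For irreducible $M$ this is automatic: $P$ has the single marked node $\alpha$, and non-triviality of the $Q$-cycles gives $Q\not\subseteq P$, so $\alpha$ does not occur among the marked nodes of $Q$. With both hypotheses of Theorem~\ref{theorem extension of subdiagram type} in place, that theorem yields the extension of $f$ to a biholomorphism of $M$. I expect the real work to lie in the third step --- faithfully matching the classical features of boundary components (that they are bounded symmetric domains, that each sits as an open subset of its compact dual inside $M$, that they coincide with the holomorphic arc components of $\mathcal O_k$, and that they form a single $G$-orbit of submanifolds) with the precise conditions of Definition~\ref{holomorphic cover}, and in identifying $Q$ explicitly enough to confirm that $P\cap Q$ is parabolic; the connectivity check and the appeal to Theorem~\ref{theorem extension of subdiagram type} are then formal.
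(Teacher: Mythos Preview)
Your proposal is correct and follows essentially the same route as the paper. The paper does not write out a separate proof of this corollary; it is obtained by combining Example~\ref{example of flag type}(1) (the $G_0$-orbits in a compact Hermitian symmetric space are partially complex and of flag type, by Wolf), Proposition preceding Example~\ref{example} (such orbits have a holomorphic cover of subdiagram type, with $Q$ the complexification of the normalizer of a holomorphic arc component), Example~\ref{example}(1) (the cover is non-trivial precisely when the orbit is neither open nor closed), and then Theorem~\ref{theorem extension of subdiagram type}. Your argument is the same one made explicit in the Hermitian symmetric language of boundary components: the boundary components of $\Omega$ are exactly Wolf's holomorphic arc components, their compact duals in $M$ are the $Q$-cycles, and the non-triviality and the $Q$-cycle-connectivity (via Picard number~1 in the irreducible case) are checked as you indicate.

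Two small remarks. First, in verifying condition~(ii) of Definition~\ref{holomorphic cover} you phrase it for the given biholomorphism $f$, but the definition asks it for an \emph{arbitrary} holomorphic map $h:W\to G/P$ with $W\subset\mathcal P_s$ open and $h(W\cap\mathcal O_s)\subset\mathcal O$; the same reasoning works (since $\mathcal O_s$ is open in $\mathcal P_s$, each connected piece of $h(W\cap\mathcal O_s)$ is the holomorphic image of a complex manifold, hence lies in a single holomorphic arc component of $\mathcal O$), and you should say so. Second, your claim that ``the reducible case is similar'' deserves a sentence of care: when some factor $\mathcal O_i$ of $\mathcal O=\mathcal O_1\times\cdots\times\mathcal O_m$ is the closed (Shilov) orbit, the naive product $Q=Q_1\times\cdots\times Q_m$ coming from the arc components need not satisfy $\psi_P\cap\psi_Q=\emptyset$ in that factor, so one has to argue connectivity factorwise rather than by a single appeal to Picard number~1; the paper is equally brief on this point.
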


The closed $G_0$-orbit in $\partial\Omega$ is precisely the Shilov boundary of $\Omega$ and it is a classical result of Alexander~\cite{Alexander} and Khenkin-Tumanov~\cite{HT} that the same extension holds in this case for irreducible bounded symmetric domains of at least dimension 2. For certain $G_0$-orbits in a compact Hermitian symmetric space, similar extensions have also been established in the works including  \cite{KaZ00}, \cite{KaZ06}, \cite{KiZ15}, \cite{MN}.

In Theorem~\ref{theorem extension of subdiagram type}, if $G/P$ is of Picard number 1, then cycle connectivity is automatic whenever the $Q$-cycles are non-trivial, so we have

\begin{corollary}\label{picard number 1 corollary}
Let $\mathcal O$ be a real group orbit on $G/P$ of Picard number 1 and $U\subset G/P$ be a connected open set such that $U\cap\mathcal O\neq\emptyset$. Suppose $\mathcal O$ has a holomorphic cover of $Q$-type for some parabolic subgroup $Q\subsetneq G$ and $Q\not\subset P$. If $f:U\rightarrow f(U)\subset G/P$ is a biholomorphism such that $f(U\cap\mathcal O)\subset\mathcal O$, then $f$ extends to a biholomorphism of $G/P$.
\end{corollary}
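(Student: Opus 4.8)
The plan is to derive the statement directly from Theorem~\ref{theorem extension of subdiagram type}, by verifying that when $G/P$ has Picard number $1$ the two hypotheses of that theorem which are not explicitly assumed here --- namely that the holomorphic cover of $\mathcal O$ be \emph{non-trivial} and that $G/P$ be \emph{$Q$-cycle-connected} --- follow automatically from the conditions $Q\subsetneq G$ and $Q\not\subset P$.

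First I would note that, since $P\cap Q$ is parabolic, it contains a Borel subgroup $B$ of $G$, so $P$ and $Q$ may both be taken to be standard parabolic subgroups relative to $B$; their inclusion relations are then encoded by the sets of marked nodes on the Dynkin diagram of $G$, with a larger parabolic corresponding to a smaller set of marked nodes. Because the Picard number of $G/P$ equals the total number of marked nodes attached to $P$, the hypothesis that this Picard number is $1$ means that $P$ carries exactly one marked node, say $i_0$. The condition $Q\not\subset P$ says that the marked node set of $P$ is not contained in that of $Q$; as the former is the singleton $\{i_0\}$, this forces $i_0\notin\mathrm{marked}(Q)$, so the marked node sets of $P$ and $Q$ are disjoint, and Proposition~\ref{connectivity} then gives that $G/P$ is $Q$-cycle-connected. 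On the other hand, $Q\subsetneq G$ and $Q\not\subset P$ together say precisely that the $Q$-cycles are neither points nor all of $G/P$, i.e. they are non-trivial, whence any holomorphic cover of $\mathcal O$ of $Q$-type is non-trivial in the sense of Definition~\ref{holomorphic cover}. Having checked both hypotheses, I would simply invoke Theorem~\ref{theorem extension of subdiagram type} to conclude that $f$ extends to a biholomorphism of $G/P$.

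Since all the substantive work is already contained in Theorem~\ref{theorem extension of subdiagram type} and Proposition~\ref{connectivity}, I do not expect any serious obstacle; the only point needing a little care is the combinatorics of marked nodes --- comparing $P$ and $Q$ inside a common Borel subgroup, and making sure the (possibly reducible) case where the Dynkin diagram of $G$ has several components but only one marked node in total is treated uniformly.
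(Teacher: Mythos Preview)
Your proposal is correct and follows essentially the same route as the paper: the paper observes (in the sentence preceding the corollary, and in the corollary after Proposition~\ref{connectivity}) that when $G/P$ has Picard number~1 the parabolic $P$ is maximal, so condition~(4) of Proposition~\ref{connectivity} holds automatically once $Q\not\subset P$, and then Theorem~\ref{theorem extension of subdiagram type} applies. Your marked-node argument is just an explicit unpacking of the maximality of $P$, so the two arguments coincide.
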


Corollary~\ref{picard number 1 corollary} covers in particular the closed $SU(p,q)$-orbits on the Grassmannian $Gr(d,\mathbb C^{p+q})$, where $d<\min(p,q)$. These orbits include the boundaries of the so-called \textit{generalized balls}, which have been studied in~\cite{BH, NG12}, etc.

\subsection{Cartan-Fubini type extension for $Q$-cycles} $\,$

As mentioned at the beginning of the introduction, the original Cartan-Fubini type extensions for minimal rational curves are proven for local holomorphic maps which are only assumed to preserve the \textit{tangent directions} of minimal rational curves. Proving such maps actually respect minimal rational curves  is a non-trivial part of the extension. Now if minimal rational curves are replaced by more general $Q$-cycles, we apply the results obtained by Mok-Zhang~\cite{MokZhang} related to the so-called \textit{Schubert rigidity} on rational homogeneous spaces (see~\cite{MokZhang} for more background), which allow us to show that the preservation of tangencies imply the $Q$-cycle-respecting property. The detail will be given in Section~\ref{last section} and the following extension is what we can obtain for $Q$-cycles, which parallels the usual Cartan-Fubini type extension for minimal rational curves.

\begin{theorem} \label{cartan-fubini}
Let $G/P$ be a rational homogeneous space of Picard number 1 and $Q\subsetneq G$ be a parabolic subgroup such that $Q\not\subset P$ and $P\cap Q$ is parabolic. If $U\subset G/P$ is a connected open set and $f: U \rightarrow f(U)\subset G/P$ is a biholomorphism such that it sends the tangent space of any $Q$-cycle to the tangent space of some $Q$-cycle, then $f$ extends to a biholomorphism of $G/P$.
 \end{theorem}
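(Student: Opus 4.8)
The plan is to prove Theorem~\ref{cartan-fubini} in two stages, exactly parallel to the structure of the classical Cartan--Fubini type extension for minimal rational curves: first upgrade the hypothesis ``$f$ preserves tangencies of $Q$-cycles'' to ``$f$ is $Q$-cycle-respecting'', and then quote Theorem~\ref{mainthm1}. For the first stage, fix a $Q$-cycle $C$ with $C\cap U\neq\emptyset$ (such $C$ exist, as every point of $G/P$ lies on a $Q$-cycle) and look at the complex submanifold $M:=f(C\cap U)\subset f(U)$. For $x\in C\cap U$ and $y=f(x)$ we have $T_yM=df_x(T_xC)$, and since $T_xC$ is the tangent space of a $Q$-cycle through $x$, the hypothesis on $f$ gives $T_yM=T_yC'$ for some $Q$-cycle $C'$ through $y$. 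Thus $M$ is a submanifold of $f(U)$ whose tangent space at every point lies on the variety of tangent spaces of $Q$-cycles. Here one uses that the $Q$-cycles are positive-dimensional (which follows from $Q\not\subset P$, since $\dim Qx=\dim Q-\dim(P\cap Q)>0$) and that $G/P$ has Picard number $1$: under these hypotheses the local form of the Schubert rigidity proven by Mok--Zhang~\cite{MokZhang} applies and forces $M$ to be an open piece of a $Q$-cycle. Hence $f$ carries germs of $Q$-cycles to germs of $Q$-cycles, i.e.\ $f$ is $Q$-cycle-respecting in the sense of Section~\ref{sheaf}.

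For the second stage, we check the hypothesis of Theorem~\ref{mainthm1}. Since $G/P$ has Picard number $1$, the parabolic $P$ corresponds to a single marked node $\alpha$ on the Dynkin diagram of $G$. Recalling that the set of marked nodes of a parabolic shrinks as the parabolic grows, the assumption $Q\not\subset P$ is equivalent to $\alpha$ not being a marked node of $Q$; therefore the marked-node sets associated to $P$ and $Q$ are disjoint, and by Proposition~\ref{connectivity} the space $G/P$ is $Q$-cycle-connected. (The $Q$-cycles are moreover non-trivial, as $Q\subsetneq G$ and $Q\not\subset P$.) Theorem~\ref{mainthm1} now applies to the germ of $f$ at any point of $U$ and yields a biholomorphism $F$ of $G/P$ extending that germ; since $F$ and $f$ agree on a non-empty open subset of the connected open set $U$, they coincide on all of $U$ by the identity theorem, so $F$ is the desired global extension.

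The main obstacle is Stage~1, namely invoking the Mok--Zhang Schubert rigidity correctly. One has to verify that the hypothesis ``$f$ preserves the tangent space of every $Q$-cycle'' translates precisely into the input the rigidity theorem requires --- that $f(C\cap U)$ has, pointwise, a tangent space lying on the relevant sub-Grassmannian of $Q$-cycle tangent directions --- and that the $Q$-cycles under consideration are of a type to which the rigidity statement applies; this is where the positive-dimensionality of the $Q$-cycles, the Picard number $1$ assumption, and the structural description of $Q$-cycles as (smooth) Schubert varieties all enter. Once $f$ is known to be $Q$-cycle-respecting, the remainder is a direct application of Theorem~\ref{mainthm1} together with the elementary Dynkin-diagram reformulation of $Q$-cycle-connectivity recorded in Proposition~\ref{connectivity}.
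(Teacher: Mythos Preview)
Your two-stage outline matches the paper's strategy in the generic case, but Stage~1 has a genuine gap: the Mok--Zhang rigidity you invoke does \emph{not} cover all $Q$-cycles on a Picard-number-1 space. As the paper records in Proposition~\ref{integral varieties}, the statement ``every local integral variety of the variety of tangent spaces of $Q$-cycles is an open piece of a $Q$-cycle'' is only proved when the $Q$-cycles are either non-linear or \emph{maximal} linear subspaces, and even then there is an explicit list of exceptions (types $(B_\ell,\alpha_i)$ with $Q$ associated to $\{\alpha_{i-1},\alpha_\ell\}$; $(C_\ell,\alpha_\ell)$ with $\{\alpha_{\ell-1}\}$; $(F_4,\alpha_1)$ with $\{\alpha_3\}$; $(G_2,\alpha_2)$ with $\{\alpha_1\}$). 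In fact the paper points out that already when the $Q$-cycles are lines there exist integral curves of the cone structure which are not lines (Section~6 of \cite{CH}), so your Stage~1 argument \emph{fails} in those cases: there is no way to conclude that $f(C\cap U)$ lies on a $Q$-cycle merely from pointwise tangency.

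The paper's proof therefore bifurcates. When Proposition~\ref{integral varieties} applies, it proceeds exactly as you do: rigidity $\Rightarrow$ $Q$-cycle-respecting $\Rightarrow$ Theorem~\ref{mainthm1}. When the $Q$-cycles are non-maximal linear subspaces, or fall into one of the four exceptional cases, the paper uses a different route entirely: preservation of the variety of tangent spaces of $Q$-cycles implies preservation of the variety of tangent directions of \emph{invariant lines}, and then one appeals to Corollary~5.4 of Yamaguchi~\cite{Y} (a differential-systems result) rather than to Theorem~\ref{mainthm1}. Your proposal is missing this second branch; to complete it you must either supply that reduction-to-lines argument or give an independent treatment of the exceptional and non-maximal-linear cases. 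Your Stage~2 (connectivity via disjoint marked nodes and the identity theorem) is correct as written.
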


\section{$Q$-cycles, $Q$-towers and sheaf of $Q$-cycle-respecting maps}

\subsection{$Q$-cycles on $G/P$}$\,$

Let $P, Q\subset G$ be parabolic subgroups of a complex simple Lie group $G$ such that $P\cap Q$ is parabolic. Then we have a double fibration
\begin{eqnarray*}
\xymatrix{
  &G/(P\cap Q)\ar[dl]_{\bf q} \ar[dr]^{\bf p}  \\
     G/Q&&G/P
 }
\end{eqnarray*}

Through the double fibration, any point $s\in G/Q$ defines a rational homogeneous subspace $\mathcal P_s:={\bf p}({\bf q}^{-1}(s))$ in $G/P$, which we will call a {\it $Q$-cycle} on $G/P$. If we consider the canonical left action of $Q$ on $G/P$, then any $Q$-cycle is just a $G$-translate of the $Q$-orbit $Qx_0\cong Q/(P\cap Q)$, where $x_0\in G/P$ is the point where $P$ is the isotropy group.
Similarly, any point $x\in G/P$ gives a rational homogeneous subspace $\mathcal Q_x:={\bf q}({\bf p}^{-1}(x))$ in $G/Q$, called a {\it $P$-cycle} on $G/Q$.

\noindent\textbf{Remark.} We have allowed the cycles to be zero-dimensional but only the positive dimensional ones are relevant in the current article.

\begin{definition}
Let $k\in\Bbb N$. We call a $k$-tuple $(\mathcal P_{s_1},\ldots, \mathcal P_{s_k})$ \textit{a chain of $Q$-cycles of length $k$}, or simply \textit{a $k$-chain of $Q$-cycles}, if $\mathcal P_{s_i}\cap\mathcal P_{s_{i+1}}\neq\emptyset$ for $1\leq i\leq k-1$.
\end{definition}

Since $P\cap Q$ is parabolic, there exists a Cartan subalgebra $\mathfrak h\subset\mathfrak g$ and a choice of simple roots $\Delta$ for $\mathfrak h$ such that $P$, $Q$ correspond to two different markings on the Dynkin diagram $\mathcal D(G)$ of $G$. Let $\psi_P$ and $\psi_Q$ be the set of marked nodes associated to $P$, $Q$ respectively. 
For a subset $\chi$ of $\Delta$ we say that $\chi$ {\it separates} $\psi_P$ and $\psi_Q$ if any connected subdiagram of the Dynkin diagram $\mathcal D(G)$ of $G$ containing both a node in $\psi_P$ and a node in $\psi_Q$ also contains a node in $\chi$. The smallest subset $\psi_Q'$ of $\psi_Q$ which separates $\psi_P$ and $\psi_Q$ is called the {\it reduction} of $\psi_Q$ mod $\psi_P$.
%
The parabolic subgroup $Q'$ corresponding to $\psi_{Q'}$ is called the \textit{reduction of $Q$ mod $P$}.

More generally, let $G$ be a complex semisimple Lie group having a decomposition $G=G_1\times\cdots\times G_m$ into simple factors and $P,Q\subset G$ be parabolic subgroups such that $P\cap Q$ is parabolic. Then we can write $P=P_1\times\cdots\times P_m$ and $Q=Q_1\times\cdots\times Q_m$ such that $P_k,Q_k$ and $P_k\cap Q_k$ are parabolic subgroups of $G_k$ for every $k$. In this case, if $Q'_k$ is the reduction of $Q_k$ mod $P_k$ for all $k$, then $Q':=Q'_1\times\cdots\times Q'_m$ is called the reduction of $Q$ mod $P$. We say that $Q$ is \textit{reduced mod} $P$ if $Q=Q'$.

\begin{proposition} [\cite{T55}, Corollary of Theorem 3] \label{parameter space}
 Let $G$ be a complex semisimple Lie group   which is a direct product of simple factors. Let $P, Q\subset G$ be parabolic subgroups such that $P\cap Q$ is parabolic
and let $Q'$ be the reduction of $Q$ mod $P$. The moduli space of $Q$-cycles on $G/P$ is $G/Q'$. More precisely, if we let $x_0\in G/P$ be the point whose isotropy group is $P$, then we have $Q'=\{g\in G: g Qx_0=Qx_0\}$.
\end{proposition}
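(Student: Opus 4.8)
\emph{This is the cited result of Tits; here is the line of argument I would follow.}

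\textbf{Reduction and set-up.} First I would dispose of the reducible case: if $G=G_1\times\cdots\times G_m$ then $G/P=\prod_k G_k/P_k$, the base point is $x_0=(x_0^{(1)},\dots,x_0^{(m)})$, the $Q$-cycle through $x_0$ is the product $\prod_k Q_k x_0^{(k)}$, a general $Q$-cycle is a product of $Q_k$-cycles whose factors are recovered by the projections, and hence both the moduli space and the set-wise stabilizer of $Qx_0$ split as products over $k$. So it suffices to treat each simple factor, and from now on $G$ is simple. Let $N:=\{g\in G:\,g\cdot Qx_0=Qx_0\}$ be the set-wise stabilizer of the $Q$-cycle through $x_0$. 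Since every $Q$-cycle is a $G$-translate of $Qx_0$ by definition, $G$ acts transitively on the set of $Q$-cycles and the moduli space is $G/N$; thus the whole statement reduces to the identity $N=Q'$, after which $Q'=\{g\in G:gQx_0=Qx_0\}$ is immediate. As $Q$ stabilizes $Qx_0$, we have $Q\subseteq N$, so $N$ is a standard parabolic subgroup. Writing, for $\chi\subseteq\Delta$, $P_\chi\subseteq G$ for the standard parabolic with set of marked nodes $\chi$ (so $P=P_{\psi_P}$, $Q=P_{\psi_Q}$, $Q'=P_{\psi_{Q'}}$) and $\mathfrak p,\mathfrak q,\mathfrak p_\chi$ for the corresponding Lie algebras, the standard parabolics containing $Q$ are exactly the $P_\chi$ with $\chi\subseteq\psi_Q$, and the crux is the equivalence
\[
P_\chi\ \text{stabilizes}\ Qx_0 \quad\Longleftrightarrow\quad \chi\ \text{separates}\ \psi_P\ \text{and}\ \psi_Q \qquad(\chi\subseteq\psi_Q).
\]

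\textbf{Deducing the proposition from the equivalence.} Granting it: writing $N=P_{\chi_N}$ with $\chi_N\subseteq\psi_Q$, the fact that $N$ stabilizes $Qx_0$ forces $\chi_N$ to separate $\psi_P$ and $\psi_Q$; and for any separating $\chi\subseteq\psi_Q$ we get $P_\chi\subseteq N=P_{\chi_N}$, i.e.\ $\chi_N\subseteq\chi$. Hence $\chi_N$ is the smallest subset of $\psi_Q$ that separates $\psi_P$ and $\psi_Q$, which is $\psi_{Q'}$ by definition, so $N=Q'$ and the moduli space is $G/Q'$.

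\textbf{Proof of the equivalence.} For $\chi\subseteq\psi_Q$ we have $Q\subseteq P_\chi$, hence $Qx_0\subseteq P_\chi x_0$; moreover $P_\chi$ stabilizes $Qx_0$ exactly when $P_\chi x_0=Qx_0$, since stabilization gives $P_\chi\cdot x_0\subseteq Qx_0\subseteq P_\chi x_0$ and the converse is trivial. Both orbits are projective homogeneous, hence smooth and irreducible, so the containment $Qx_0\subseteq P_\chi x_0$ is an equality iff the (nested) tangent spaces at $x_0$ coincide. The tangent space at $x_0$ to the $R$-orbit of $x_0$ is the image of $\mathfrak r$ under $\mathfrak g\to\mathfrak g/\mathfrak p=T_{x_0}(G/P)$, so, since $\mathfrak q\subseteq\mathfrak p_\chi$, this condition becomes $\mathfrak p_\chi\subseteq\mathfrak q+\mathfrak p$. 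Decomposing into $\mathfrak h$-root spaces relative to $\Delta$, that says: every positive root whose support avoids $\chi$ has its support inside the complement of $\psi_P$ or inside the complement of $\psi_Q$. Using that the support of a root is a connected subdiagram, and conversely that any connected subdiagram is the support of a root (the highest root of the irreducible subsystem it spans), this is equivalent to saying that no connected subdiagram avoiding $\chi$ meets both $\psi_P$ and $\psi_Q$ — which is precisely the definition of $\chi$ separating $\psi_P$ and $\psi_Q$.

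\textbf{Where the difficulty sits.} The only genuinely non-formal step is the passage from the global statement ``$P_\chi$ preserves the cycle $Qx_0$ set-wise'' to the infinitesimal statement ``$\mathfrak p_\chi\subseteq\mathfrak q+\mathfrak p$ at $x_0$'', and it is handled by the rigidity of projective homogeneous orbits used above: an inclusion of such orbits whose tangent spaces agree at a single point is an equality of orbits. Everything else — the product decomposition in the reducible case, the root-support translation of $\mathfrak p_\chi\subseteq\mathfrak q+\mathfrak p$, and the identification of the minimal separating subset of $\psi_Q$ with $\psi_{Q'}$ — is bookkeeping, much of it already built into the definitions recalled before the proposition.
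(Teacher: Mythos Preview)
The paper does not give its own proof of this proposition; it is simply stated as a citation from Tits (\cite{T55}, Corollary of Theorem~3). So there is nothing to compare against, and the question is only whether your argument stands on its own.

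It does. The reduction to simple factors is routine; the observation that $N\supseteq Q$ forces $N$ to be a standard parabolic $P_{\chi_N}$ with $\chi_N\subseteq\psi_Q$ is correct; and your key equivalence is proved cleanly. The passage from ``$P_\chi$ stabilizes $Qx_0$'' to ``$P_\chi x_0=Qx_0$'' to ``$\mathfrak p_\chi\subseteq\mathfrak q+\mathfrak p$'' is valid because both $Qx_0$ and $P_\chi x_0$ are closed irreducible subvarieties of $G/P$ (each is a quotient of a parabolic by a subgroup containing the fixed Borel, hence projective), so an inclusion with matching tangent spaces at $x_0$ forces equality of dimensions and hence of the orbits. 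The root-space translation of $\mathfrak p_\chi\subseteq\mathfrak q+\mathfrak p$ into ``no connected subdiagram avoiding $\chi$ meets both $\psi_P$ and $\psi_Q$'' is exactly the separation condition, using the standard fact that supports of positive roots are precisely the connected subdiagrams. The final minimality argument identifying $\chi_N$ with $\psi_{Q'}$ is then immediate.

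One small point you use implicitly: for $\chi\subseteq\psi_Q$, the orbit $P_\chi x_0$ is compact because $P_\chi\cap P$ is again parabolic (both contain the fixed Borel). This is true and easy, but worth one line since the paper's hypothesis only guarantees it for $P\cap Q$.
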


If $Q$ is reduced mod $P$ and vice versa, we have the following interpretation of $G/(P\cap Q)$ which will be important for establishing our extension theorems. In what follows, we will denote by $Gr(d,T(M))$ the Grassmannian bundle of $d$-dimensional holomorphic tangent subspaces of a complex manifold $M$. In addition, for the sake of making a simpler statement, we take $Gr(0,T(M)):=M$.

\begin{proposition}\label{identification}
Let $G$ be a complex semisimple Lie group which is a direct product of simple factors. Let $P, Q\subset G$ be parabolic subgroups such that $P\cap Q$ is parabolic. Suppose $Q$ is reduced mod $P$ (resp. $P$ is reduced mod $Q$). Then $G/(P\cap Q)$ can be holomorphically embedded as a closed complex submanifold in $Gr(k, T(G/P))$, where $k=\dim_\mathbb C(\mathcal P_s)$ for any $s\in G/Q$ (resp. $Gr(\ell,T(G/Q))$, where $\ell=\dim_\mathbb C(\mathcal Q_x)$ for any $x\in G/P$).
\end{proposition}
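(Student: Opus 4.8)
The plan is to realize the claimed embedding as the \emph{tangent map} of the family of $Q$-cycles along the double fibration, to identify it with an orbit map, and then to pin down the relevant isotropy subgroup by a short argument on the Dynkin diagram. It is enough to treat the first assertion, the parenthetical one following verbatim after interchanging $P$ and $Q$ and replacing $\mathbf p$ by $\mathbf q$. Along $G/Q\xleftarrow{\mathbf q}G/(P\cap Q)\xrightarrow{\mathbf p}G/P$, the restriction of $\mathbf p$ to each fibre of $\mathbf q$ is a closed embedding onto a $Q$-cycle $\mathcal P_s\cong Q/(P\cap Q)$, of dimension $k$ independent of $s$ by homogeneity; hence $d\mathbf p$ restricts to an injective bundle map from the relative tangent bundle $T_{\mathbf q}$ into $\mathbf p^{*}T(G/P)$, realizing $T_{\mathbf q}$ as a rank-$k$ holomorphic subbundle, which is classified by a holomorphic map that, post-composed with the obvious projection, yields a $G$-equivariant holomorphic map
\[
\tau\colon G/(P\cap Q)\longrightarrow Gr\bigl(k,T(G/P)\bigr),\qquad
\tau\bigl(g(P\cap Q)\bigr)=T_{gP}\bigl(\mathcal P_{gQ}\bigr).
\]
Since $G/(P\cap Q)$ is a compact homogeneous space and $\tau$ is equivariant, $\tau$ will be a closed holomorphic embedding as soon as I check that the $G$-stabilizer of the base point $\tau(o)$ (where $\mathbf p(o)=x_0$ and $\mathbf q(o)=s_0$) coincides with the stabilizer $P\cap Q$ of $o$ itself: $\tau$ then identifies $G/(P\cap Q)$ isomorphically with the orbit $G\cdot\tau(o)$, which, being the image of a compact set, is a closed submanifold of $Gr(k,T(G/P))$.

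Because $\tau(o)$ lies over $x_0$, its $G$-stabilizer lies in $P=\mathrm{Stab}_G(x_0)$. Writing $\widetilde W:=\mathfrak p+\mathfrak q$ for the preimage in $\mathfrak g$ of $T_{x_0}\mathcal P_{s_0}=(\mathfrak p+\mathfrak q)/\mathfrak p\subset\mathfrak g/\mathfrak p$, this stabilizer is $P\cap N_G(\widetilde W)$ (using $\mathrm{Ad}(g)\mathfrak p=\mathfrak p$ for $g\in P$ and $\mathfrak p\subset\widetilde W$). The inclusion $P\cap Q\subset P\cap N_G(\widetilde W)$ is immediate, as an element of $P\cap Q$ fixes $x_0$ and preserves $\mathcal P_{s_0}=Qx_0$, hence its tangent space at $x_0$. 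For the reverse inclusion, note that $[\mathfrak n^{+},\widetilde W]\subset[\mathfrak n^{+},\mathfrak p]+[\mathfrak n^{+},\mathfrak q]\subset\mathfrak p+\mathfrak q=\widetilde W$ and $[\mathfrak h,\widetilde W]\subset\widetilde W$, so the standard Borel $\mathfrak b$ normalizes $\widetilde W$; hence $N_\mathfrak g(\widetilde W)$ is a parabolic subalgebra, $R:=N_G(\widetilde W)$ is the associated standard parabolic subgroup with some marked set $\psi_R$, and $P\cap R$ is the standard parabolic with marked set $\psi_P\cup\psi_R$. As $P\cap Q\subset P\cap R\subset P$, it remains only to prove $\psi_Q\setminus\psi_P\subset\psi_R$.

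This is the crux, and the only place where the hypothesis that $Q$ is reduced mod $P$ enters. Fix $v\in\psi_Q\setminus\psi_P$. It suffices to exhibit a negative root $\alpha$ whose support $\mathrm{supp}(\alpha)$ meets $\psi_P$, is disjoint from $\psi_Q$, and contains a node adjacent to $v$ in $\mathcal D(G)$: disjointness from $\psi_Q$ gives $\mathfrak g_\alpha\subset\mathfrak q\subset\widetilde W$, the adjacency makes $\alpha-v$ a root, and $\mathrm{supp}(\alpha-v)=\mathrm{supp}(\alpha)\cup\{v\}$ meets both $\psi_P$ and $\psi_Q$, so $\mathfrak g_{\alpha-v}\not\subset\widetilde W$ (every negative root of $\widetilde W$ has support missing $\psi_P$ or missing $\psi_Q$); hence $[\mathfrak g_{-v},\mathfrak g_\alpha]\not\subset\widetilde W$, so $\mathfrak g_{-v}$ does not normalize $\widetilde W$ and $v\in\psi_R$. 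Such an $\alpha$ comes from reducedness: $\psi_Q\setminus\{v\}$ does not separate $\psi_P$ and $\psi_Q$, so some connected subdiagram $C$ contains $v$, contains a node $v_P\in\psi_P$, and meets $\psi_Q$ only in $v$; deleting $v$ from a path inside $C$ joining $v$ to $v_P$ produces a connected subdiagram $C'$ that contains a neighbour of $v$, still meets $\psi_P$, and is disjoint from $\psi_Q$; then $\alpha:=-\theta_{C'}$, the negative of the highest root of the irreducible root subsystem with simple roots $C'$, has support exactly $C'$, as required.

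I expect this last combinatorial step to be the main obstacle: $\alpha$ must simultaneously stay inside $\widetilde W$ (forcing $\mathrm{supp}(\alpha)$ to avoid all of $\psi_Q$) and bracket nontrivially against $e_{-v}$ (forcing it to touch a neighbour of $v$), and it is exactly the minimality built into the definition of the reduction $\psi_Q'$ that makes both conditions satisfiable at once. That reducedness is genuinely needed is visible already in the degenerate case $k=0$, where $Gr(0,T(G/P))=G/P$, so that $G/(P\cap Q)$ can embed there only if $P\cap Q=P$ --- which is exactly what reducedness forces when the $Q$-cycles are points. Granting the combinatorial step, $\psi_P\cup\psi_R=\psi_P\cup\psi_Q$, hence $P\cap N_G(\widetilde W)=P\cap Q$, so $\tau$ is the desired closed holomorphic embedding.
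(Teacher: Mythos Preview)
Your proof is correct, and it follows a genuinely different route from the paper's.

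The paper defines the same tangent map $h=\tau$ but establishes injectivity by quoting an external result (Lemma~3.4 of \cite{HongMok}): since the $Q$-cycles and the moduli of $Q$-cycles through a fixed point are both compact, the tangent space of a $Q$-cycle at any point already determines the cycle; combined with Proposition~\ref{parameter space} (which is where the reducedness of $Q$ mod $P$ enters, guaranteeing that $G/Q$ genuinely parametrizes the cycles), this yields injectivity of $h$ in two lines. Your argument instead computes the stabilizer $P\cap N_G(\mathfrak p+\mathfrak q)$ directly: you observe that $N_{\mathfrak g}(\mathfrak p+\mathfrak q)$ is a standard parabolic $\mathfrak r$, and then use a Dynkin-diagram construction---exploiting the minimality in the definition of ``reduced mod $P$'' to build the witness root $\alpha=-\theta_{C'}$---to show $\psi_Q\setminus\psi_P\subset\psi_R$. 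The verification that $\theta_{C'}+v$ is a root (via the $v$-string, since $\langle\theta_{C'},v^\vee\rangle<0$ and $\theta_{C'}-v$ is not a root) and that $\mathrm{supp}(\theta_{C'})=C'$ are both standard, and your separation argument using connected subdiagrams is clean.

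What each approach buys: the paper's proof is very short but outsources the key rigidity statement; yours is self-contained and makes the role of the reducedness hypothesis completely transparent---it is used exactly once, to produce the subdiagram $C$ meeting $\psi_Q$ only in $v$. One minor point you might make explicit: the passage from ``$\mathrm{Stab}_G(\tau(o))$ has Lie algebra $\mathfrak p\cap\mathfrak q$'' to ``$\mathrm{Stab}_G(\tau(o))=P\cap Q$'' uses that any closed subgroup squeezed between the parabolic $P\cap Q$ and $P$ contains a Borel and is therefore itself parabolic (hence connected); this is implicit in your phrasing but worth a word.
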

\begin{proof}
Suppose $Q$ is reduced mod $P$. We just need to consider the non-trivial cases where $\dim_{\mathbb C}(\mathcal P_s)=k>0$.
We can regard $G/(P\cap Q)$ as the universal family of $Q$-cycles on $G/P$. Let $\zeta\in G/(P\cap Q)$ and $x:={\bf p}(\zeta)$, $s:={\bf q}(\zeta)$. Then by sending $\zeta$ to the holomorphic tangent space of $\mathcal P_{s}$ at $x$, we get a holomorphic map $h:G/(P\cap Q)\rightarrow Gr(k,T(G/P))$. Note that the $Q$-cycles are compact and the moduli of $Q$-cycles passing through a given point in $G/P$, which is just a $P$-cycle in $G/Q$ is also compact. It then follows that the holomorphic tangent space of a $Q$-cycle at any of its point already determines the cycle itself. (See for example~\cite{HongMok}, Lemma 3.4 for a proof of this.) Thus, $h$ is a holomorphic embedding. Similarly, $G/(P\cap Q)$ can be embedded in $Gr(\ell,T(G/Q))$ if $P$ is reduced mod $Q$.
\end{proof}


\subsection{$Q$-towers on $G/P$}\label{towersection}$\,$

We are now going to construct some projective manifolds, called the $Q$-towers, associated to $G/P$ which are essentially some kind of universal families of the $k$-chains of $Q$-cycles. In order to simplify the notations, we write $\mathcal U:=G/(P\cap Q)$ and denote the canonical projections by ${\bf p}:\mathcal U\rightarrow G/P$ and ${\bf q}:\mathcal U\rightarrow G/Q$. We will define the $Q$-towers recursively. First of all, let the restriction of $\bf q$ to ${\bf p}^{-1}(x_0)$ be ${\bf q}_1$. Consider the pullback of the bundle ${\bf q}:\mathcal U\rightarrow G/Q$ by ${\bf q}_1$ and denote it by ${\bf\hat q}_1:{\bf q}^*_1\mathcal U\rightarrow{\bf p}^{-1}(x_0)$. Associated to the pullback, there is a map ${\bf\widetilde q}_1:{\bf q}^*_1\mathcal U\rightarrow\mathcal U$ such that ${\bf q}\circ{\bf\widetilde q}_1={\bf q}_1\circ{\bf\hat q}_1$. To summarize, we have now
$$
\xymatrix
{
  {\bf q}_1^*\mathcal U \ar[d]^{{\bf\hat q}_1}\ar[r]^{{\bf\widetilde q}_1}&\mathcal U\ar[d]^{\bf q}\\
     {\bf p}^{-1}(x_0)\ar[r]^{{\bf q}_1}&G/Q
 }
$$

We call $\mathcal T^1_{x_0}:={\bf q}^*_1\mathcal U$ the first $Q$-tower at $x_0$. Let ${\bf p}_1:={\bf p}\circ{\bf\widetilde q}_1$. Geometrically, the image ${\bf p}_1(\mathcal T^1_{x_0})\subset G/P$ is the union of all the $Q$-cycles passing through $x_0$ and ${\bf p}_1$ is just the \textit{evaluation map} when we regard ${\bf\hat q}_1:\mathcal T^1_{x_0}\rightarrow{\bf p}^{-1}(x_0)$ as the universal family of $Q$-cycles passing through $x_0$.

To construct the second $Q$-tower, consider the pullback of the bundle ${\bf p}:\mathcal U\rightarrow G/P$ by ${\bf p}_1:\mathcal T^1_{x_0}\rightarrow G/P$ and denote it by ${\bf\hat p}_1:{\bf p}_1^*\mathcal U\rightarrow\mathcal T^1_{x_0}$. Thus, we have the commutative diagram
$$
\xymatrix
{
  {\bf p}_1^*\mathcal U \ar[d]^{{\bf\hat p}_1}\ar[r]^{{\bf\widetilde p}_1}&\mathcal U\ar[d]^{\bf p}\\
     \mathcal T^1_{x_0}\ar[r]^{{\bf p}_1}&G/P
 }
$$
for some ${\bf\widetilde p}_1:{\bf p}_1^*\mathcal U\rightarrow\mathcal U$.
Now write ${\bf q}_2:{\bf p}_1^*\mathcal U\rightarrow G/Q$, where ${\bf q}_2:={\bf q}\circ{\bf\widetilde p}_1$ and let ${\bf\hat q}_2:{\bf q}_2^*\mathcal U\rightarrow {\bf p}_1^*\mathcal U$ be the pullback of ${\bf q}:\mathcal U\rightarrow G/Q$ by ${\bf q}_2$. We now have the commutative diagram
$$
\xymatrix
{
  {\bf q}_2^*\mathcal U \ar[d]^{{\bf\hat q}_2}\ar[r]^{{\bf\widetilde q}_2}&\mathcal U\ar[d]^{\bf q}\\
     {\bf p}_1^*\mathcal U\ar[r]^{{\bf q}_2}&G/Q
 }
$$
for some ${\bf\widetilde q}_2:{\bf q}_2^*\mathcal U\rightarrow\mathcal U$.
Then the second $Q$-tower at $x_0$ is $\mathcal T^2_{x_0}:={\bf q}_2^*\mathcal U$. It is equipped with the evaluation map ${\bf p}_2: {\bf q}_2^*\mathcal U\rightarrow G/P$, where ${\bf p}_2={\bf p}\circ{\bf\widetilde q}_2$.

Assume now for some $k\geq 2$, the $k$-th $Q$-tower $\mathcal T^k_{x_0}$ together with the evaluation map ${\bf p}_k:\mathcal T^k_{x_0}\rightarrow G/P$ have been constructed. Consider the pullback ${\bf\hat p}_k:{\bf p}_k^*\mathcal U\rightarrow\mathcal T^k_{x_0}$ of the bundle ${\bf p}:\mathcal U\rightarrow G/P$ by ${\bf p}_k$ and let ${\bf\widetilde p}_k:{\bf p}_k^*\mathcal U\rightarrow\mathcal U$ be the map such that ${\bf p}\circ{\bf\widetilde p}_k={\bf p}_k\circ{\bf\hat p}_k$.

Write ${\bf q}_{k+1}:{\bf p}_k^*\mathcal U\rightarrow G/Q$, where ${\bf q}_{k+1}:={\bf q}\circ{\bf\widetilde p}_k$ and let ${\bf\hat q}_{k+1}:{\bf q}_{k+1}^*\mathcal U\rightarrow {\bf p}_k^*\mathcal U$ be the pullback of ${\bf q}:\mathcal U\rightarrow G/Q$ by ${\bf q}_{k+1}$, together with the map ${\bf\widetilde q}_{k+1}:{\bf q}_{k+1}^*\mathcal U\rightarrow\mathcal U$. The $(k+1)$-th $Q$-tower at $x_0$ is $\mathcal T^{k+1}_{x_0}:={\bf q}_{k+1}^*\mathcal U$, equipped with the evaluation map ${\bf p}_{k+1}:{\bf q}_{k+1}^*\mathcal U\rightarrow G/P$, where ${\bf p}_{k+1}={\bf p}\circ{\bf\widetilde q}_{k+1}$.

As for the first $Q$-tower,   the image of ${\bf p}_k(\mathcal T^k_{x_0})$ is just the union of the images of all the $k$-chains $(\mathcal P_{s_1},\ldots,\mathcal P_{s_k})$ of $Q$-cycles such that $x_0\in\mathcal P_{s_1}$.

There is an alternative description for the $Q$-towers, as follows. By keeping track of each pullback procedure during the construction, it is not difficult to see that $\mathcal T^k_{x_0}$ can be realized as the closed complex submanifold of $(G/P)^k\times (G/Q)^k$ consisting of the $2k$-tuples $(x_1,\ldots,x_k, s_1,\ldots,s_k)$ such that for every $j\in\{1,\ldots, k\}$, the $Q$-cycle $\mathcal P_{s_j}$ contains both $x_{j-1}$ and $x_j$. In this way, we see that $\mathcal T^k_{x_0}$ is a projective manifold. The evaluation map ${\bf p}_k$ is just the projection $(x_1,\ldots,x_k,s_1,\ldots,s_k)\mapsto x_k$.
Furthermore, for $1\leq j<k$, the projection $\pi_{k,j}$ defined by
$$
\pi_{k,j}(x_1,\ldots,x_k,s_1,\ldots,s_k) := (x_1,\ldots,x_j,s_1,\ldots,s_j)
$$
is clearly a holomorphic surjection from $\mathcal T^k_{x_0}$ to $\mathcal T^j_{x_0}$.


It follows immediately from the definition that $\pi_{k,j}^{-1}(x_1,\ldots,x_j,s_1,\ldots,s_j)$ can be canonically identified with $\mathcal T^{k-j}_{x_j}$. Since $G/P$ is homogeneous and its automorphisms respect the $Q$-cycles, we have the biholomorphism $\mathcal T^\ell_x\cong\mathcal T^\ell_{x_0}$ for every $\ell\in\mathbb N^+$ and  $x\in G/P$. Thus, whenever $1\leq j <k$, we have the holomorphic fiber bundle
$$
\mathcal T^{k-j}_{x_0}\longrightarrow\mathcal T^k_{x_0}\overset{\pi_{k,j}}{\longrightarrow}\mathcal T^j_{x_0}.
$$

\begin{definition}\label{diagonal}
Let $\mathcal D^k_{x_0}=\{(x_1,\ldots,x_k,s_1,\ldots,s_k)\in\mathcal T^k_{x_0}: x_1=\cdots=x_k=x_0\}$. We call $\mathcal D^k_{x_0}$ the \textit{diagonal preimage} of $x_0$ in $\mathcal T^k_{x_0}$.
It is obvious that $\mathcal D^1_{x_0}={\bf p}_1^{-1}(x_0)$, $\mathcal D^k_{x_0}\subset {\bf p}_k^{-1}(x_0)$ and $\pi_{k,j}(\mathcal D^k_{x_0})=\mathcal D^j_{x_0}$ whenever $1\leq j <k$.
\end{definition}

\subsection{$Q$-cycle-connectivity}$\,$

Using   $Q$-cycles, we can define an equivalence relation $\sim$ on $G/P$ as follows. For any pair of two points $x,y \in G/P$, we say that $x\sim y$ if  there is a chain of $Q$-cycles $(\mathcal P_{s_1},\ldots,\mathcal P_{s_k})$ such that $x\in\mathcal P_{s_1}$ and $y\in\mathcal P_{s_k}$. Then the equivalence relation $\sim$ is $G$-equivariant, i.e. $x \sim y$ if and only if $gx \sim gy$ for any $g \in G$. Thus there is a holomorphic surjective map from $G/P$ to the space of equivalence classes of $\sim$ (which is also a rational homogeneous space of $G$).

\begin{definition}
We say that $G/P$ is \textit{$Q$-cycle-connected} if for every $x, y\in G/P$, there is a chain of $Q$-cycles connecting $x$ and $y$, i.e. $x\sim y$.
\end{definition}

\begin{proposition}\label{connectivity}
Let $P, Q\subset G$ be parabolic subgroups such that $P\cap Q$ is parabolic. Then the following are equivalent:

(1) $G/P$ is $Q$-cycle-connected;

(2) There exists $N\in\mathbb N^+$ such that for every $x,y\in G/P$, there is an $N$-chain of $Q$-cycles connecting $x$ and $y$;

(3) There exists $N\in\mathbb N^+$ such that for every $k\geq N$, the evaluation map ${\bf p}_k:\mathcal T^k_{x}\rightarrow G/P$ is surjective for any $x$;

(4) There does not exist any parabolic subgroup $R\subsetneq G$ containing $P$ and $Q$.
\end{proposition}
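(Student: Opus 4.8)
The plan is to establish the implications $(1)\Rightarrow(3)\Rightarrow(2)\Rightarrow(1)$ together with the separate equivalence $(1)\Leftrightarrow(4)$. Of these, $(2)\Rightarrow(1)$ is immediate from the definitions, and $(3)\Rightarrow(2)$ is nearly so: taking $k=N$ in $(3)$ says that for every $x$ the evaluation map ${\bf p}_N\colon\mathcal T^N_x\to G/P$ is onto, i.e. every $y\in G/P$ occurs as the terminal point $x_N$ of some point $(x_1,\dots,x_N,s_1,\dots,s_N)$ of $\mathcal T^N_x$, which is precisely an $N$-chain of $Q$-cycles joining $x$ to $y$.

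The substance lies in $(1)\Rightarrow(3)$. For each $k$ set $X_k:={\bf p}_k(\mathcal T^k_{x_0})\subset G/P$, the set of terminal points of $k$-chains of $Q$-cycles emanating from $x_0$. Since $\mathcal T^k_{x_0}$ is a projective manifold — an iterated fibre bundle over the irreducible base ${\bf p}^{-1}(x_0)=P/(P\cap Q)$ with irreducible, rational homogeneous fibres — and ${\bf p}_k$ is a morphism, each $X_k$ is a closed irreducible subvariety of $G/P$. Lengthening a $k$-chain by appending a $Q$-cycle through its terminal point (and keeping that point as the new terminal point) shows $X_k\subseteq X_{k+1}$, so $\{X_k\}$ is an increasing chain of closed subvarieties of the Noetherian space $G/P$ and therefore stabilizes: $X_N=X_{N+j}$ for all $j\geq0$ and some $N$. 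By the very definition of the relation $\sim$, the union $\bigcup_kX_k$ is the $\sim$-class of $x_0$; hence that class equals $X_N$. If $G/P$ is $Q$-cycle-connected then this class is all of $G/P$, so ${\bf p}_N$, and a fortiori ${\bf p}_k$ for every $k\geq N$, is surjective; by homogeneity $\mathcal T^k_x\cong\mathcal T^k_{x_0}$ $G$-equivariantly (equivalently, at $x=gx_0$ one has $X_k(x)=gX_k$), so the same holds at every point, which is $(3)$.

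For $(1)\Leftrightarrow(4)$ I would use the $G$-equivariant holomorphic surjection $\pi\colon G/P\to G/R$ onto the space of $\sim$-classes observed above, where $R\supseteq P$ is parabolic and its fibre through $x_0$ is the $\sim$-class $X_N$. The map $\pi$ is constant on each $Q$-cycle, since a $Q$-cycle lies in a single $\sim$-class; applying this to $Qx_0$ and using equivariance, $q\cdot\pi(x_0)=\pi(qx_0)=\pi(x_0)$ for $q\in Q$, so $Q\subseteq R$. Conversely, if $R'\subsetneq G$ is a parabolic with $P,Q\subseteq R'$, then $G/P\to G/R'$ is constant on each $Q$-cycle (because $Q\subseteq R'$ fixes the base point of $G/R'$), hence constant on each $\sim$-class, hence factors through $\pi$; comparing the fibres through $x_0$, namely $R/P\subseteq R'/P$, gives $R\subseteq R'$. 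Thus $R$ is the smallest parabolic containing $P$ and $Q$, and $G/P$ is $Q$-cycle-connected $\iff$ $X_N=G/P$ $\iff$ $R=G$ $\iff$ no proper parabolic of $G$ contains both $P$ and $Q$, which is $(4)$. I expect the stabilization step in $(1)\Rightarrow(3)$ — the passage from mere connectivity to a \emph{uniform} chain length $N$ — to be the main (if mild) obstacle, being the one place where compactness of the $Q$-cycles, through the closedness of the $X_k$, is genuinely used.
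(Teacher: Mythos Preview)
Your proof is correct and follows essentially the same approach as the paper: the stabilization of the images $X_k$ (you via Noetherianity, the paper via dimension), homogeneity to pass from $x_0$ to all $x$, and for $(1)\Leftrightarrow(4)$ the $G$-equivariant projection to the space of $\sim$-classes together with the observation that $Q$ fixes the image of $x_0$. Your argument additionally identifies $R$ as the \emph{smallest} parabolic containing both $P$ and $Q$, which the paper does not state explicitly but is a harmless (and pleasant) refinement.
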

\begin{proof}
The implication (2) $\Rightarrow$ (1) is trivial. To see that (1) implies (2), fix a base point $x_0\in G/P$ and consider the evaluation map of ${\bf p}_k:\mathcal T^k_{x_0}\rightarrow G/P$ for every $k$. Each ${\bf p}_k$ is a proper holomorphic map between irreducible projective varieties and thus ${\bf p}_k(\mathcal T^k_{x_0})$ is an irreducible algebraic subvariety of $G/P$. Recall that ${\bf p}_k(\mathcal T^k_{x_0})$ is the union of the images of all the $k$-chains of $Q$-cycles emanating from $x_0$ and in particular, ${\bf p}_k(\mathcal T^k_{x_0})\subset{\bf p}_{k+1}(\mathcal T^{k+1}_{x_0})$. Therefore, by considering the dimension, there is an $N\in\mathbb N^+$ such that ${\bf p}_k(\mathcal T^k_{x_0})={\bf p}_N(\mathcal T^N_{x_0})$ for every $k\geq N$. But (1) implies that ${\bf p}_N(\mathcal T^N_{x_0})$ cannot be a proper subset of $G/P$ and hence we have  ${\bf p}_k(\mathcal T^k_{x_0})={\bf p}_N(\mathcal T^N_{x_0})=G/P$ for every $k\geq N$. The previous argument also demonstrates that (1) $\Rightarrow$ (3) and the converse is trivial.

Suppose now $G/P$ is not $Q$-cycle-connected. Then as explained previously, there is a $G$-equivariant holomorphic fibration $\psi:G/P\rightarrow G/R$ for some parabolic subgroup $R\subsetneq G$ containing $P$. Now let $x_0\in G/P$ be the point whose isotropy group is just $P$. Then the $Q$-cycle $Qx_0$ is contained in $\psi^{-1}(\psi(x_0))=Rx_0$ by our construction of $G/R$. Thus, for every $q\in Q$, there exists $r\in R$ such that $qx_0=rx_0$ and hence $q^{-1}r\in P\subset R$. Therefore,  $q\in R$ and it follows that $Q\subset R$. We have thus established (4) $\Rightarrow$ (1). Finally, if there exists a parabolic subgroup $R\subsetneq G$ containing $P$ and $Q$, then any $Q$-cycle is contracted to a point by the $G$-equivariant projection $G/P\rightarrow G/R$ and it is immediate that $G/P$ is not $Q$-cycle-connected. Hence, (1) $\Rightarrow$ (4).
\end{proof}

\noindent\textbf{Remark.} Regarding the cycle-connectivity on complex homogeneous manifolds, we noted that a related but different statement is given by Koll\'ar~(\cite{K15}, Theorem 2).

\begin{corollary}
If $G/P$ is of Picard number 1, then (1) to (4) in Proposition~\ref{connectivity} hold for whenever $Q\not\subset P$, or equivalently, whenever the $Q$-cycles are of positive dimension.
\end{corollary}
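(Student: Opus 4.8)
The plan is to reduce the corollary entirely to the equivalence of conditions (1) and (4) in Proposition~\ref{connectivity}, so that the only thing to check is the combinatorial assertion that, under the Picard number $1$ hypothesis, no proper parabolic subgroup $R\subsetneq G$ can contain both $P$ and $Q$ unless $Q\subset P$.

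First I would set up the standard dictionary between parabolic subgroups containing $P$ and subsets of the marked-node set $\psi_P$. Since $P\cap Q$ is parabolic, $P$ and $Q$ share a common Borel subgroup and hence arise from markings on the same Dynkin diagram, as in the discussion preceding Proposition~\ref{parameter space}. A parabolic $R$ with $P\subseteq R\subseteq G$ is then determined by a subset $\psi_R\subseteq\psi_P$, with $\psi_R=\psi_P$ corresponding to $R=P$ and $\psi_R=\emptyset$ to $R=G$; more generally, the smallest parabolic subgroup containing both $P$ and $Q$ has marked-node set $\psi_P\cap\psi_Q$, and consequently a proper parabolic $R\subsetneq G$ containing $P$ and $Q$ exists if and only if $\psi_P\cap\psi_Q\neq\emptyset$. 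Now the Picard number of $G/P$ equals the total number of marked nodes $|\psi_P|$ (summed over the simple factors of $G$), so the hypothesis forces $\psi_P$ to consist of a single node, say $\psi_P=\{\alpha\}$.

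Next I would run the argument itself. With $\psi_P=\{\alpha\}$, the intersection $\psi_P\cap\psi_Q$ is nonempty precisely when $\alpha\in\psi_Q$, i.e. when $\psi_P\subseteq\psi_Q$, and this last condition is exactly $Q\subseteq P$. Hence whenever $Q\not\subset P$ there is no proper parabolic $R\subsetneq G$ containing both $P$ and $Q$, so condition (4) of Proposition~\ref{connectivity} holds, and therefore so do conditions (1)--(3). For the remaining equivalence, the $Q$-cycle through $x_0$ is $Qx_0\cong Q/(P\cap Q)$, which reduces to a single point exactly when $Q$ fixes $x_0$, i.e. when $Q\subseteq P$; thus ``$Q\not\subset P$'' and ``the $Q$-cycles are positive-dimensional'' express the same condition.

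I do not expect a genuine obstacle here, as every step is a direct translation into the combinatorics of marked nodes. The only point requiring mild care is that $G$ is merely assumed semisimple: one should observe that the Picard number $1$ hypothesis collapses all but one simple factor of $G$ to a point (on which the corresponding component of a $Q$-cycle is trivial), so that the argument effectively takes place on a single simple factor with $P$ a maximal parabolic.
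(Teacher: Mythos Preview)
Your proposal is correct and follows the same approach as the paper: verify condition~(4) of Proposition~\ref{connectivity} by observing that Picard number~1 forces $P$ to be a maximal parabolic, so no proper parabolic $R\subsetneq G$ can contain both $P$ and $Q$ once $Q\not\subset P$. The paper's proof is the one-line version of this (``$P$ is maximal parabolic, hence (4) holds''), whereas you unpack the same fact through the marked-node combinatorics and additionally justify the equivalence $Q\not\subset P\Leftrightarrow\dim\mathcal P_s>0$, which the paper simply asserts in the statement.
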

\begin{proof}
The condition (4) in Proposition~\ref{connectivity} obviously holds since $P$ is a maximal parabolic subgroup.
\end{proof}

\begin{lemma}\label{simply connectedness}
The canonical left action of $P$ on $G/P$ has a unique open orbit. Furthermore, the open orbit is simply connected.
\end{lemma}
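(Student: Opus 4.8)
The plan is to realize the open orbit as a homogeneous space $P/H$ where $H$ is a \emph{connected} subgroup containing a maximal torus of $G$, and then to kill $\pi_1(P/H)$ by feeding this information into the homotopy exact sequence of the fibre bundle $H\to P\to P/H$.

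First one reduces to $G$ simple, since $G$, $P$ (and hence $G/P$ and the $P$-action) all split as products over the simple factors of $G$. Fix a maximal torus $T$ and a Borel $B$ with $T\subset B\subset P$, let $w_0$ be the longest element of the Weyl group with a representative $\dot w_0\in N_G(T)$, and put $H:=P\cap\dot w_0P\dot w_0^{-1}$, which is exactly the stabilizer of the point $\dot w_0x_0\in G/P$ in $P$. Its Lie algebra is $\mathfrak p\cap\mathrm{Ad}(\dot w_0)\mathfrak p$, and a short root-space count — using $w_0(\Phi^+)=\Phi^-$ together with the fact that $-w_0$ permutes the simple roots and therefore carries the set of Levi roots of $P$ to a set of the same cardinality — gives $\dim H=\dim L$, where $L$ is a Levi factor of $P$. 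Hence the orbit $\mathcal O:=P\cdot(\dot w_0x_0)\cong P/H$ has dimension $\dim P-\dim L=\dim R_u(P)=\dim(G/P)$, so it is Zariski open; and it is the unique open orbit, since two distinct ones would be disjoint nonempty open subsets of the irreducible (hence connected) variety $G/P$. (Alternatively, $P\supset B$ has only finitely many orbits on $G/P$ by the Bruhat decomposition, exactly one of which is dense.)

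For the simple connectivity I would use two facts. First, $H$ is connected: it is the intersection of the two parabolic subgroups $P$ and $\dot w_0P\dot w_0^{-1}$ of $G$, both of which contain the common maximal torus $T$, and such an intersection is well known to be connected (it is generated by $T$ and the root subgroups $U_\alpha$ for the $T$-roots $\alpha$ of $\mathrm{Lie}(H)$). Second, the inclusion $T\subset H$ induces a surjection $\pi_1(T)\to\pi_1(H)\to\pi_1(P)$: indeed $T\subset H\subset P$, and since $P$ is a parabolic it is connected and deformation retracts onto a maximal compact subgroup $K_P$, which we may take to contain the compact torus $T_c\subset T$ as a maximal torus; as $\pi_1(T_c)\twoheadrightarrow\pi_1(K_P)$ for every compact connected group, the composite $\pi_1(T)\to\pi_1(P)$ is onto, hence so is $\pi_1(H)\to\pi_1(P)$. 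Plugging $\pi_0(H)=\pi_0(P)=0$ into the exact sequence
$$\pi_1(H)\longrightarrow\pi_1(P)\longrightarrow\pi_1(P/H)\longrightarrow\pi_0(H)$$
of the bundle $H\to P\to P/H$ then forces $\pi_1(\mathcal O)=\pi_1(P/H)=0$.

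The only genuinely external ingredient here is the connectedness of an intersection of two parabolics sharing a maximal torus, and this is the step I expect to be the main obstacle if one wants a fully self-contained treatment; everything else is a routine dimension count plus the standard homotopy sequence. Conceptually the heart of the argument is that $H$ contains an entire maximal torus of $G$: this is precisely what makes $\pi_1(P/H)$ vanish even though $\pi_1(P)\cong\pi_1(L)$ is infinite whenever $P\ne G$, its free rank being the dimension of the centre of $L$.
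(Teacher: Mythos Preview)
Your proof is correct and takes a genuinely different route from the paper's. The paper argues by cases governed by the Dynkin-diagram involution $\iota:\alpha\mapsto -w_0(\alpha)$: when $\phi_P=\iota(\phi_P)$ the open orbit is literally the big cell $U\cdot x_\infty\cong\mathbb C^n$; when $\phi_P\neq\iota(\phi_P)$ the authors invoke a result of Perrin on the codimension of the complement $\mathcal Z=G/P\setminus\mathcal O$, showing either directly that $\codim\mathcal Z\ge 2$ (so $\mathcal O$ inherits simple connectivity from $G/P$) or reducing to that situation after projecting to an auxiliary $G/P'$. Your argument, by contrast, is uniform: you realise the open orbit as $P/H$ with $H=P\cap\dot w_0P\dot w_0^{-1}$, note that $H$ is connected and contains the full maximal torus $T$, and conclude $\pi_1(P/H)=0$ from the surjectivity of $\pi_1(T)\to\pi_1(P)$ fed into the homotopy exact sequence. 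This avoids both the case split and the external citation; the only outside ingredient you use is the connectedness of an intersection of two parabolics sharing a maximal torus, which is standard (Borel--Tits). The paper's approach has the merit of yielding explicit geometric information about $\mathcal O$ (affine space, or complement of high codimension), whereas yours isolates cleanly the structural reason for simple connectivity: the stabiliser already carries a full maximal torus, so every loop in $P$ dies in the quotient.
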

\begin{proof}
As $P$ is parabolic, it only has finitely many orbits in $G/P$~(e.g. see \cite{Bor}). The irreducibility (as an algebraic variety) of $G/P$ implies that the union of all the open orbit(s) is connected since it is the complement of the union of the lower dimensional orbits and the latter is a proper complex algebraic subvariety $\mathcal Z\subset G/P$. Therefore, there is just one open orbit and we denote it by $\mathcal O$.

To show that $\mathcal O$ is simply connected, we first consider the case when $G$ is simple. Fix a Borel subgroup $B$ contained in $P$. Then $\mathcal Z$ is a union of Schubert varieties of $B$. Now consider the involution $\iota$ of the Dynkin diagram of $G$ defined by mapping a simple root $\alpha$ to $-\omega_0(\alpha)$, where $\omega_0$ is the longest element of the Weyl group of $G$. The involution $\iota$ is nontrivial if $G$ is of type $A_{\ell}$, $D_{\ell}$, and  $E_6$, and is the identity otherwise.  We will divide into two cases: (1) when $\phi_P$ is invariant under $\iota$;  (2) when $\phi_P \not=\iota(\phi_P)$.

When $\phi_P=\iota(\phi_P)$, there is a point $x_{\infty}$ in $G/P$ whose isotropy group is the opposite parabolic subgroup $P^-$ (just take $x_{\infty} :=\omega_0.x_0$, where $x_0$ is the point in $G/P$ at which the isotropy group is $P$).  Let  $U$ be the unipotent part of $P$. Then the open orbit $\mathcal O$ is the $P$-orbit $P.x_{\infty}=U.x_{\infty}$ and is biholomorphic to $  \mathbb C^n$, where $n=\dim_{\mathbb C}G/P$.

When $\phi_P\not=\iota(\phi_P)$, then   $\mathcal Z$   has codimension $\geq 2$ if  $\phi_P \cap \iota(\phi_P)  =\emptyset$, and has codimension one if  $\phi_P \cap \iota(\phi_P) \not=\emptyset$  (Proposition 6 of \cite{P02}). In the first case, $\mathcal O$ is simply connected because it is the complement of a complex subvariety of codimension at least 2 in $G/P$ which is simply connected. In the second  case, let $\phi'$ be a subset of $\phi_P$ such that $\phi' \cap \iota(\phi_P) =\emptyset$ (just take $\phi':=\phi_P -\iota(\phi_P)$). Since $\phi_P \not=\iota(\phi_P)$,  $\phi' $ is nonempty. Let $P'$ be the corresponding parabolic subgroup of $G$ and $\tau:G/P \rightarrow G/P'$ be the  projection map. Then the complement of the open $P$-orbit $\mathcal O'$ in $G/P'$ has codimension at least 2 in $G/P'$  (Proposition 6 of \cite{P02}) and thus $\mathcal O'$ is simply connected. Since $\tau|_{\mathcal O}: \mathcal O \rightarrow \mathcal O'$ is a surjective map whose fibers are connected, $\mathcal O$ is simply connected.

Finally, when $G$ is a direct product simple complex Lie groups, then $\mathcal O$ decomposes as a product correspondingly. Each factor of the product is simply connected by the argument above and thus their product $\mathcal O$ is simply connected.
\end{proof}

\begin{proposition}\label{fiber connectedness}
If $N$ is a positive integer such that any $x\in G/P$ can be connected to $x_0$ by a chain of $Q$-cycles of length at most $N$, then for every $n\geq N$, the fibers of the evaluation map ${\bf p}_n:\mathcal T^n_{x_0}\rightarrow G/P$ are connected.
\end{proposition}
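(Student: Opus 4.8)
The plan is to establish, uniformly for every $n\ge N$, that all fibers of the evaluation map ${\bf p}_n:\mathcal T^n_{x_0}\to G/P$ are connected. The strategy is to first prove that the fibers over the open $P$-orbit are connected — using the simple connectedness furnished by Lemma~\ref{simply connectedness} — and then to propagate connectedness of fibers from a dense Zariski-open subset to all of $G/P$ by Stein factorization.

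First I would collect three elementary facts about ${\bf p}_n$. Using the description of $\mathcal T^n_{x_0}$ as the set of $2n$-tuples $(x_1,\dots,x_n,s_1,\dots,s_n)$ with $\mathcal P_{s_j}\ni x_{j-1},x_j$ and the holomorphic fiber bundle $\mathcal T^1_{x_0}\to\mathcal T^n_{x_0}\xrightarrow{\pi_{n,1}}\mathcal T^{n-1}_{x_0}$, together with the connectedness of $\mathcal T^1_{x_0}$ (itself a fiber bundle over the connected $P$-cycle $\mathcal Q_{x_0}$ with connected fibers), one checks that $\mathcal T^n_{x_0}$ is a smooth connected, hence irreducible, projective variety. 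Next, the hypothesis on $N$ gives that ${\bf p}_N$, and therefore ${\bf p}_n$ for every $n\ge N$, is surjective: a point joined to $x_0$ by a chain of $Q$-cycles of length $\le N$ is joined to $x_0$ by a chain of length exactly $n$ after padding the chain with copies of its last $Q$-cycle at the endpoint. Finally, and this is the structural point, ${\bf p}_n$ is equivariant for the canonical action of $P$ (the isotropy group of $x_0$) on $\mathcal T^n_{x_0}$ given by $p\cdot(x_1,\dots,x_n,s_1,\dots,s_n)=(px_1,\dots,px_n,ps_1,\dots,ps_n)$; this action preserves $\mathcal T^n_{x_0}$ precisely because $p$ fixes $x_0$ and $g\mathcal P_s=\mathcal P_{gs}$.

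Then I would argue as follows. Let $\mathcal O\subset G/P$ be the unique open $P$-orbit, which is simply connected by Lemma~\ref{simply connectedness}. Since ${\bf p}_n$ is $P$-equivariant and $\mathcal O$ is a single $P$-orbit, the restriction ${\bf p}_n^{-1}(\mathcal O)\to\mathcal O$ is a topologically locally trivial fiber bundle with fiber ${\bf p}_n^{-1}(y_0)$ for $y_0\in\mathcal O$. On the other hand ${\bf p}_n^{-1}(\mathcal O)$ is a nonempty Zariski-open subset of the irreducible variety $\mathcal T^n_{x_0}$, hence connected. Because the base $\mathcal O$ is simply connected, the local system $y\mapsto\pi_0\big({\bf p}_n^{-1}(y)\big)$ on $\mathcal O$ is trivial, so the number of components of the fiber equals the number of components of the connected total space ${\bf p}_n^{-1}(\mathcal O)$, namely one; thus ${\bf p}_n$ has connected fibers over the dense open set $\mathcal O$. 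To pass to every fiber, take the Stein factorization ${\bf p}_n=\psi\circ\phi$ with $\phi:\mathcal T^n_{x_0}\to Z$ proper with connected fibers and $\psi:Z\to G/P$ finite. Over $\mathcal O$ the fibers of ${\bf p}_n$ and of $\phi$ coincide, so $\psi$ is one-to-one over $\mathcal O$, hence birational; a finite birational morphism onto the smooth, in particular normal, variety $G/P$ is an isomorphism, so ${\bf p}_n=\phi$ has connected fibers everywhere.

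I expect the main obstacle to be the final step — propagating connectedness of the fibers from the dense open orbit to every fiber — for which the normality (indeed smoothness) of $G/P$ is exactly what makes the Stein-factorization argument go through; a secondary point deserving care is verifying that ${\bf p}_n^{-1}(\mathcal O)\to\mathcal O$ is a genuine locally trivial fiber bundle, so that the simple connectedness of $\mathcal O$ from Lemma~\ref{simply connectedness} can be fed into the $\pi_0$ argument. The remainder is bookkeeping with the realization of $\mathcal T^n_{x_0}$ as a space of chains of $Q$-cycles.
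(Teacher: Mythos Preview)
The proposal is correct and follows essentially the same approach as the paper: both use the $P$-equivariance of ${\bf p}_n$, the simple connectedness of the unique open $P$-orbit in $G/P$ from Lemma~\ref{simply connectedness}, and Stein factorization together with the normality of $G/P$ (Zariski's Main Theorem) to conclude. The only difference is the order of operations---you first prove connectedness of the fibers over the open orbit via the fiber-bundle/$\pi_0$ argument and then Stein-factorize, whereas the paper Stein-factorizes first and then shows the finite map is an unramified cover over the open orbit---but the substance is the same.
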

\begin{proof}
Fix a positive integer $n \geq N$ and let $\mathcal T:=\mathcal T^n_{x_0}$,
 $\rho:={\mathbf p}_n$.
Then $\rho: \mathcal T \rightarrow G/P$ is a holomorphic surjection between two projective manifolds and in particular, is proper. By Stein factorization, there are a projective variety $Y$ and  a holomorphic map $\rho': \mathcal T \rightarrow Y$ with connected fibers  and a finite holomorphic map $\phi: Y \rightarrow G/P$ such that $\rho=\phi \circ \rho'$. It suffices to show that $\phi$ is actually biholomorphic.

By the homogeneity of $G/P$ we can assume that the action of $P$ on $G/P$ fixes $x_0$. Since the double fibration $G/Q \leftarrow \mathcal U \rightarrow G/P$ is $G$-equivariant, we see that $P$ acts on $\mathcal T$ and $\rho$ is $P$-equivariant. Recall that in the construction of Stein factorization, the variety $Y$ is just the space of connected components of the fibers of $\rho$. Thus, $Y$ is acted by $P$ accordingly and in the factorization both $\phi$ and $\rho'$ are $P$-equivariant.


As mentioned in the proof of Lemma~\ref{simply connectedness}, there are only finitely many $P$-orbits in $G/P$. Consequently, $Y$ also has only finitely many $P$-orbits since $\phi:Y \rightarrow G/P$ is a finite map. Thus, $Y$ has an open $P$-orbit.  As $\mathcal T$ is irreducible and hence so is $Y$, the union of the open orbit(s) of $Y$ is connected since the union of the lower dimensional orbits is a proper complex algebraic subvariety. Therefore, there is just one open $P$-orbit on $Y$ and we denote it by $\mathcal O_Y$.
By Lemma~\ref{simply connectedness}, the unique open $P$-orbit $\mathcal O\subset G/P$ is simply connected. Now the $P$-equivariant map $\phi$ maps $\mathcal O_Y$ onto $\mathcal O$. Since $\mathcal O_Y$ and $\mathcal O$ are $P$-orbits, the finite map $\phi|_{\mathcal O_Y}$ is necessarily unramified since the ramification locus must be $P$-invariant.
Thus, $\phi|_{\mathcal O_Y} :\mathcal O_Y \rightarrow \mathcal O$ is a biholomorphism and hence $\phi:Y\rightarrow G/P$ is a finite birational holomorphic map onto $G/P$. Note that $G/P$ is smooth and in particular, is a normal variety. By Zariski Main Theorem~(\cite{Hart} Corollary III.11.4), the inverse image of $\phi$ of every point of $G/P$ is connected and consequently, $\phi$ is a biholomorphism.
\end{proof}

\subsection{Sheaf of $Q$-cycle-respecting maps $\mathscr B$}\label{sheaf} $\,$

Let $f:U\subset G/P\rightarrow G/P$ be a local biholomorphism defined on an open set $U$. We say that $f$ \textit{is $Q$-cycle-respecting} if for every $Q$-cycle $\mathcal P_s$ intersecting $U$, the image of each connected component of $\mathcal P_s\cap U$ is contained in some $Q$-cycle $\mathcal P_{s'}$. If we assign to every open set $U\subset G/P$ the set of such local biholomorphisms, it is easily seen that we get a presheaf $\mathscr B^o$ on $G/P$ and we will denote the sheaf associated to $\mathscr B^o$ by $\mathscr B$.

We call $\mathscr B$ the \textit{sheaf of $Q$-cycle-respecting maps}. The set of sections of $\mathscr B$ on $U$   will be denoted by $\mathscr B(U)$ and for $\mathfrak S\in\mathscr B(U)$, its germ at $x\in U$ will be denoted by $\mathfrak S_x$. We will denote by $\mathscr B_x$ the stalk of $\mathscr B$ at $x$. By our definition, for an open set $U\subset G/P$, the sections in $\mathscr B(U)$ correspond to the holomorphic maps from $U$ to $G/P$ that are locally $Q$-cycle-respecting biholomorphism. If $\mathfrak f\in\mathscr B_x$ and it is represented by a locally $Q$-cycle-respecting map $f:U\rightarrow G/P$, then it is clear that the point $f(x)$ only depends on $\mathfrak f$ and we will denote it by $\mathfrak f(x)$.

The following proposition says essentially that for a given germ in $\mathscr B$, we can choose a local representative of it having a ``stronger" $Q$-cycle-respecting behavior.

\begin{proposition}\label{good neighborhood}
Let $f:U \subset G/P \rightarrow G/P$ be a $Q$-cycle-respecting local biholomorphism defined on an open set $U$.
There exists an open set $V_1\subset U$ such that for any $Q$-cycle $\mathcal P_s$ intersecting $V_1$, we have $f(\mathcal P_s\cap V_1)\subset\mathcal P_{s'}$ for some $s'\in G/Q$ and the analogous condition also holds for $f^{-1}|_{V_2}$, where $V_2=f(V_1)$.
\end{proposition}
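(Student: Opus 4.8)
The plan is to exploit that the $Q$-cycles through a point form a compact family — namely the $P$-cycle $\mathcal Q_x = {\bf q}({\bf p}^{-1}(x))$ in $G/Q$ — so that "$f$ respects the $Q$-cycle $\mathcal P_s$ near $x$" is a condition that can be made uniform in $s$ on a small enough neighborhood. Concretely, fix $x_0 \in U$ and work with the double fibration $G/Q \xleftarrow{\bf q} \mathcal U \xrightarrow{\bf p} G/P$, where $\mathcal U = G/(P\cap Q)$. The incidence variety $\mathcal I \subset \mathcal U$ of pairs $(\zeta)$ with ${\bf p}(\zeta)$ near $x_0$ fibers over the compact base $\mathcal Q_{x_0}$, so after shrinking to a relatively compact polydisk-like neighborhood $W \Subset U$ of $x_0$ one can arrange that ${\bf p}^{-1}(W)$ meets only $Q$-cycles $\mathcal P_s$ with $s$ ranging over a relatively compact neighborhood of $\mathcal Q_{x_0}$ in $G/Q$, and that each connected component of $\mathcal P_s \cap W$ is a connected piece of $Q$-cycle containing a point of $W$.

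First I would set up the "expected image cycle" map. Since $f$ is $Q$-cycle-respecting on $U$, for each $Q$-cycle $\mathcal P_s$ meeting $W$ and each connected component $C$ of $\mathcal P_s \cap W$, there is some $s' = s'(C) \in G/Q$ with $f(C) \subset \mathcal P_{s'}$. The key point is that $s'$ is determined by any single point of $C$ together with the tangent data there: by Proposition~\ref{identification} (applied to the reduction of $Q$ mod $P$, which defines the same cycles), a positive-dimensional $Q$-cycle is recovered from its tangent space at one of its points, and $f$ being a biholomorphism carries $T_x(\mathcal P_s)$ to $T_{f(x)}(f(C)) \subset T_{f(x)}(\mathcal P_{s'})$. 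Thus $s'$ depends \emph{holomorphically} on a chosen base point $x \in C$ (through the differential $df$ and the embedding of $\mathcal U$ into the Grassmann bundle), and in particular the assignment is continuous and locally constant along $C$. So the obstruction to the stronger conclusion $f(\mathcal P_s \cap V_1) \subset \mathcal P_{s'}$ is exactly that $\mathcal P_s \cap W$ may have \emph{several} components, landing a priori in different image cycles.

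To kill that, I would shrink once more: choose $V_1 \Subset W$ a connected neighborhood of $x_0$ small enough that for every $s$ with $\mathcal P_s \cap V_1 \neq \emptyset$, the intersection $\mathcal P_s \cap V_1$ is connected — this is possible because, after passing to suitable coordinates on $G/P$ near $x_0$ in which the $Q$-cycles through nearby points look like graphs over a fixed linear subspace (using local triviality of the universal family $\mathcal U \to G/P$ near $x_0$ and the compactness of the fiber $\mathcal Q_{x_0}$), a sufficiently small polydisk meets each such cycle in a connected set. On such a $V_1$ the single component $C = \mathcal P_s \cap V_1$ satisfies $f(C) \subset \mathcal P_{s'}$ with a well-defined $s' = s'(s)$, which is the asserted property. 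Finally, to get the symmetric statement for $f^{-1}$, note $f(V_1) =: V_2$ is open; apply the same shrinking argument to $f^{-1}: V_2 \to U$ (which is $Q$-cycle-respecting because $f$ is a biholomorphism and $f$ respects $Q$-cycles both ways on the relevant pieces), replacing $V_1$ by $f^{-1}(V_2')$ for the resulting $V_2' \subset V_2$, and then re-intersect — a routine finite iteration of shrinkings that terminates since each step is an honest open restriction.

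The main obstacle I expect is the connectivity reduction, i.e.\ arranging that $\mathcal P_s \cap V_1$ is connected for \emph{all} nearby $s$ simultaneously: one must be careful that the family of cycles, although parametrized by the compact $\mathcal Q_{x_0}$, varies, so the "good polydisk" has to be chosen uniformly over this compact parameter space. This is where compactness of $\mathcal Q_{x_0}$ (equivalently, properness of ${\bf q}$ restricted to ${\bf p}^{-1}(x_0)$) and the holomorphic — hence locally trivial up to the structure group — nature of the double fibration do the real work; the holomorphic dependence of $s'$ on the data, by contrast, is an immediate consequence of Proposition~\ref{identification}.
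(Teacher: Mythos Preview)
Your overall strategy---shrink so that every $Q$-cycle meets the neighborhood in a connected set, then handle $f^{-1}$ by a further shrinking---is the right one and matches the paper's. But the step where you justify connectivity has a gap. You claim that in suitable local coordinates near $x_0$ ``the $Q$-cycles through nearby points look like graphs over a fixed linear subspace.'' This is false in general: the $Q$-cycles through $x_0$ are parametrized by $\mathcal Q_{x_0}$ and their tangent spaces at $x_0$ typically sweep out a large (often full) subset of $Gr(k,T_{x_0}(G/P))$---think of lines through a point in $\mathbb P^n$. No single complementary subspace can serve as a base for all of them. Compactness of $\mathcal Q_{x_0}$ alone does not rescue this: a finite cover by charts in each of which the cycles \emph{are} graphs gives finitely many product neighborhoods, but their intersection need not retain the connectivity property for all cycles simultaneously.

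The paper bypasses this difficulty with a cleaner, global device: embed $G/P$ in $\mathbb P(V)$ by a very ample line bundle. Each $Q$-cycle is a Schubert variety, hence a \emph{linear section} $G/P\cap\mathbb P(V_0)$, and since $G$ acts linearly the same holds for every translate. Then one simply takes $U'\subset U$ ``convex'' in the sense that its intersection with every linear subspace of $\mathbb P(V)$ is connected (e.g.\ a small ball in an affine chart); automatically $\mathcal P_s\cap U'$ is connected for every $s$, with no uniformity argument needed over $\mathcal Q_{x_0}$. For the $f^{-1}$ half, the paper also avoids your iteration: pick a convex $V''\subset f(U')$ around $f(x_0)$ and set $V_1:=f^{-1}(V'')$. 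Even though $\mathcal P_s\cap V_1$ may now be disconnected, it sits inside the connected $\mathcal P_s\cap U'$, whose $f$-image already lies in a single cycle; so the forward property on $V_1$ is inherited for free, while the backward property holds on $V_2=V''$ by convexity.
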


\begin{proof} Let $L$ be an ample line bundle on $G/P$. Then $L$ is very ample (Section 2.8 of \cite{BL}).  Consider the embedding of $G/P$ into $\mathbb P(V)$ by $L$. Let $\mathcal P_0$ be the $Q$-orbit  of $x_0$. Then  $\mathcal P_0$ is the linear section $G/P \cap \mathbb P(V_0)$ of $G/P$ by $\mathbb P(V_0)$, where $\mathbb P(V_0)$ be the linear span of $\mathcal P_0$ in $\mathbb P(V)$ (Section 2.10 and 2.11 of \cite{BL}). Here we use the fact that $\mathcal P_0$ is a Schubert variety of $G/P$.   Furthermore, since $G$ acts on $\mathbb P(V)$ linearly, $g\mathbb P(V_0)$ are linear spaces in $\mathbb P(V)$ and $g\mathcal P_0$ are linear sections $ G/P\cap  g\mathbb P(V_0) $.
Take a neighbourhood $U'$ of $x_0$ in $U$ which is convex in the sense that $ g\mathbb P(V_0) \cap U'$ are connected for any $g \in G$. Then $g \mathcal P \cap U' = g \mathbb P(V_0) \cap U'$ is connected for any $g \in G$. By the $Q$-cycle-respecting property of $f$, for any $Q$-cycle $\mathcal P_s$ intersecting $U'$, we have $f(\mathcal P_s \cap U') \subset \mathcal P_{s'}$ for some $s' \in G/Q$.

To get the analogous condition for $f^{-1}$, take a convex neighborhood $V''$ of $f(x_0)$ in $V':=f(U')$ in the same way as above and put $V_1:=f^{-1}(V'')$. Then we still have the property that   for any $Q$-cycle $\mathcal P_s$ intersecting $V_1$, $\mathcal P_s \cap V_1 \subset \mathcal P_{s'}$ for some $s' \in G/Q$ (note. now the intersection $\mathcal P_s \cap V_1$ may be disconnected).  By the convexity of $V''$,  the analogous condition holds for $f^{-1}$ on $V_2=f(V_1) =V''$.
\end{proof}

For our purposes, given a sheaf $\mathscr S$ on a complex manifold $X$, we will equivalently regard a section $\mathfrak S\in\mathscr S(U)$ on an open set $U\subset X$ as a continuous map from $U$ to the \textit{espace \'etal\'e}~\cite{Hart} of $\mathscr S$, defined by $\mathfrak S(x)=\mathfrak S_x$, $x\in U$. Here we recall that the espace \'etal\'e of $\mathscr S$ is the set $\bigcup_{x\in X}\mathscr S_x$, equipped with the topology generated by the base $\{\mathfrak S(U): \mathfrak S\in\mathscr S(U), U\subset X\}$, where $\mathfrak S(U)$ is the image of $U$ under the map $\mathfrak S$ just described.

The espace \'etal\'e of $\mathscr B$ is a Hausdorff topological space by the identity theorem for holomorphic functions. In particular, for a continuous curve $\Gamma:[0,1]\rightarrow G/P$ and a germ $\mathfrak f\in\mathscr B_{\Gamma(0)}$, there is at most one lifting $\tilde\Gamma$ to the espace \'etal\'e of $\mathscr B$ such that $\tilde\Gamma(0)=\mathfrak f$. If such lifting exists, we call $\tilde\Gamma(1)\in\mathscr B_{\Gamma(1)}$ the \textit{analytic continuation} of $\mathfrak f$ along $\Gamma$.

Now let $M$ be a complex manifold and $g:M\rightarrow G/P$ be a holomorphic map. The \textit{inverse image} $g^{-1}\mathscr B$ of $\mathscr B$ on $M$ is by definition~\cite{Hart} the sheaf associated to the presheaf assigning the direct limit $\displaystyle\varinjlim_{U\supseteq g(V)}\mathscr B(U)$ to an open set $V\subset M$, where the limit is taken over all the open sets $U\subset G/P$ containing $g(V)$. Recall that for $y\in M$, there is a canonical identification $(g^{-1}\mathscr B)_y\cong\mathscr B_{g(y)}$. If we let $\mathscr E$ be the espace \'etal\'e of $\mathscr B$ and $\pi:\mathscr E\rightarrow G/P$ be the canonical projection, then the espace \'etal\'e of $g^{-1}\mathscr B$ is just the pullback of $\pi:\mathscr E\rightarrow G/P$ by $g$ and we have the commutative diagram:
$$
\xymatrix
{
  g^{-1}\mathscr E \ar[d]^{\pi_M}\ar[r]^{\tilde g}&\mathscr E\ar[d]^{\pi}\\
     M\ar[r]^{g}&G/P
 }
$$
where $g^{-1}\mathscr E:=\{(m,\mathfrak e)\in M\times\mathscr E: g(m)=\pi(\mathfrak e)\}\cong\bigsqcup_{m\in M}\mathscr B_{g(m)}$ is equipped with the subspace topology from $M\times\mathscr E$, and $\pi_M$, $\tilde g$ are the projections to $M$ and $\mathscr E$ respectively.
We can define the notion of analytic continuation for $g^{-1}\mathscr B$ analogously and it is related to the analytic continuation of $\mathscr B$ as follows.

\begin{proposition}\label{relating analytic continuation}
Let $\Gamma:[0,1]\rightarrow M$ be a continuous curve and $\mathfrak f_0\in (g^{-1}\mathscr B)_{\Gamma(0)}$. Suppose that $\mathfrak f_1\in (g^{-1}\mathscr B)_{\Gamma(1)}$ is an analytic continuation of $\mathfrak f_0$ along $\Gamma$, then through the canonical identifications $\tilde g:(g^{-1}\mathscr B)_{\Gamma(t)}\overset{\cong}\rightarrow\mathscr B_{g(\Gamma(t))}$, $t\in[0,1]$, the germ $\mathfrak f_1\in\mathscr B_{g(\Gamma(1))}$ is also the analytic continuation of $\mathfrak f_0\in\mathscr B_{g(\Gamma(0))}$ along $g\circ\Gamma$.
\end{proposition}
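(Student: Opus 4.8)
The plan is to read off the statement directly from the fibre-product description of the espace \'etal\'e of $g^{-1}\mathscr B$. Recall that this espace \'etal\'e is $g^{-1}\mathscr E=\{(m,\mathfrak e)\in M\times\mathscr E:g(m)=\pi(\mathfrak e)\}$ with the subspace topology from $M\times\mathscr E$, carrying the projections $\pi_M$ to $M$ and $\tilde g$ to $\mathscr E$; the second projection $\tilde g$ is continuous because it is the restriction of the continuous projection $M\times\mathscr E\to\mathscr E$, and one has the identity $\pi\circ\tilde g=g\circ\pi_M$ by the very definition of the fibre product. These are the only structural facts I will need.

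First I would unwind the hypothesis: a germ $\mathfrak f_1\in(g^{-1}\mathscr B)_{\Gamma(1)}$ being the analytic continuation of $\mathfrak f_0$ along $\Gamma$ means there is a continuous lift $\tilde\Gamma\colon[0,1]\to g^{-1}\mathscr E$ of $\Gamma$ (that is, $\pi_M\circ\tilde\Gamma=\Gamma$) with $\tilde\Gamma(0)=\mathfrak f_0$ and $\tilde\Gamma(1)=\mathfrak f_1$, where germs are identified with the corresponding points of the espace \'etal\'e; this lift is unique since $g^{-1}\mathscr E$, being a subspace of the Hausdorff space $M\times\mathscr E$, is Hausdorff. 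Set $\gamma:=\tilde g\circ\tilde\Gamma\colon[0,1]\to\mathscr E$. Then $\gamma$ is continuous, and $\pi\circ\gamma=\pi\circ\tilde g\circ\tilde\Gamma=g\circ\pi_M\circ\tilde\Gamma=g\circ\Gamma$, so $\gamma$ is a lift of the composed curve $g\circ\Gamma$ to $\mathscr E$. Moreover $\gamma(0)=\tilde g(\mathfrak f_0)$ and $\gamma(1)=\tilde g(\mathfrak f_1)$, which under the canonical identifications $\tilde g\colon(g^{-1}\mathscr B)_{\Gamma(t)}\overset{\cong}{\rightarrow}\mathscr B_{g(\Gamma(t))}$ are exactly $\mathfrak f_0$ and $\mathfrak f_1$ viewed as germs in $\mathscr B_{g(\Gamma(0))}$ and $\mathscr B_{g(\Gamma(1))}$ respectively. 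Hence $\gamma$ exhibits $\mathfrak f_1$ as an analytic continuation of $\mathfrak f_0$ along $g\circ\Gamma$, and by the Hausdorffness of $\mathscr E$ it is the unique such continuation.

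I do not expect any genuine obstacle: the entire content is the compatibility between the subspace topology on $g^{-1}\mathscr E$ and the two notions of path-lifting, together with the fact that the stalk-level identification $(g^{-1}\mathscr B)_y\cong\mathscr B_{g(y)}$ used throughout is precisely the one induced by $\tilde g$. The only point deserving a line of care is to confirm this last compatibility, which is immediate from the construction of the inverse-image sheaf via the direct limit $\varinjlim_{U\supseteq g(V)}\mathscr B(U)$, since passing from a section of $\mathscr B$ defined near $g(y)$ to its germ there is exactly what $\tilde g$ does on stalks.
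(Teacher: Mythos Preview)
Your proof is correct and is exactly the argument the paper has in mind: the paper's own proof consists of a single sentence, ``It follows immediately from the commutative diagram above,'' and what you have written is a careful unpacking of that sentence using the fibre-product description of $g^{-1}\mathscr E$ and the identity $\pi\circ\tilde g=g\circ\pi_M$.
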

\begin{proof}
It follows immediately from the commutative diagram above.
\end{proof}

\begin{corollary}\label{constant on fibers}
Let $i:U\hookrightarrow M$ be the inclusion of an open set $U$ and regard a section $\mathfrak S\in (g^{-1}\mathscr B)(U)$ as a continuous map $\mathfrak S:U\rightarrow g^{-1}\mathscr E$ such that $\pi_M\circ\mathfrak S|_U=i$. If $Z\subset U$ is a connected set such that $g|_Z$ is a constant, then $\tilde g\circ\mathfrak S|_Z$ is a constant.
\end{corollary}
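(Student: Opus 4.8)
The plan is to combine two elementary facts: that the stalks of $\mathscr B$, viewed inside its espace \'etal\'e $\mathscr E$, carry the discrete topology, and that a continuous map from a connected space to a discrete space is constant. So I first want to recall why $\pi:\mathscr E\rightarrow G/P$ is a local homeomorphism and hence has discrete fibers. Given $\mathfrak e\in\mathscr B_p$, choose a section $\mathfrak T\in\mathscr B(W)$ on a neighbourhood $W$ of $p$ with $\mathfrak T(p)=\mathfrak e$; the basic open set $\mathfrak T(W)\subset\mathscr E$ contains $\mathfrak e$, and since a section meets each stalk in at most one point we have $\mathfrak T(W)\cap\pi^{-1}(p)=\{\mathfrak e\}$. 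Thus $\{\mathfrak e\}$ is open in $\mathscr B_p=\pi^{-1}(p)$ with the subspace topology, i.e. $\mathscr B_p$ is discrete.

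Next I would check that $\tilde g\circ\mathfrak S:U\rightarrow\mathscr E$ is continuous and that its restriction to $Z$ takes values in a single stalk. Continuity is immediate: $\mathfrak S:U\rightarrow g^{-1}\mathscr E$ is continuous by our convention identifying a section with the corresponding map into the espace \'etal\'e, and $\tilde g:g^{-1}\mathscr E\rightarrow\mathscr E$ is continuous because $g^{-1}\mathscr E$ carries the subspace topology from $M\times\mathscr E$ and $\tilde g$ is the restriction of the projection onto the second factor. From the commutative square preceding the statement, together with $\pi_M\circ\mathfrak S=i$, we get $\pi\circ(\tilde g\circ\mathfrak S)=g\circ\pi_M\circ\mathfrak S=g$ on $U$. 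Hence if $g|_Z$ is the constant value $p$, then $\tilde g\circ\mathfrak S$ maps $Z$ into $\pi^{-1}(p)=\mathscr B_p$.

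Finally, since $Z$ is connected and $\mathscr B_p$ is discrete, the continuous map $(\tilde g\circ\mathfrak S)|_Z:Z\rightarrow\mathscr B_p$ must be constant, which is exactly the assertion. There is essentially no obstacle in this argument; the only point that deserves a line of justification is the discreteness of the stalks inside $\mathscr E$, which is however a purely formal property of the espace \'etal\'e. Alternatively, one can argue locally without mentioning discreteness: near any $z_0\in Z$, by the definition of $g^{-1}\mathscr B$ the section $\mathfrak S$ is the pullback of a fixed section $\mathfrak T\in\mathscr B(W)$ with $g(z_0)\in W$, so $\tilde g\circ\mathfrak S=\mathfrak T\circ g$ in a neighbourhood of $z_0$; when $g$ is constant on that neighbourhood's intersection with $Z$, this composite is constant there, so $\tilde g\circ\mathfrak S|_Z$ is locally constant, hence constant on the connected set $Z$.
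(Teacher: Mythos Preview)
Your argument is correct and follows essentially the same route as the paper's own proof: show that $\tilde g\circ\mathfrak S$ maps the connected set $Z$ continuously into the single stalk $\pi^{-1}(p)$, then use that stalks are discrete in the espace \'etal\'e to conclude constancy. The paper compresses this into two lines, taking the discreteness of $\pi^{-1}(x)$ for granted, whereas you helpfully spell out why the stalks are discrete and why the relevant composite is continuous; your alternative locally-constant argument is also valid but not needed.
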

\begin{proof}
Let $g|_Z\equiv x\in G/P$. Then, by the commutative diagram above, $\tilde g\circ\mathfrak S(Z)$ is a connected set in $\pi^{-1}(x)$. But by definition $\pi^{-1}(x)$ is a discrete set in $\mathscr E$ and hence $\tilde g\circ\mathfrak S|_Z$ is a constant.
\end{proof}

\section{Extension for germs of $Q$-cycle-respecting maps}

\subsection{Local extension}$\,$

Fix an arbitrary base point $x_0\in G/P$ and $\mathfrak f\in\mathscr B_{x_0}$.

We will first establish a local version of the extension. In what follows, we will adopt the notations in Section~\ref{towersection} and write $\mathcal U:=G/(P\cap Q)$, and ${\bf p}:\mathcal U \rightarrow G/P$ and ${\bf q}:\mathcal U\rightarrow G/Q$ for the canonical projections.
Recall that for $x_0\in G/P$, the set $\mathcal Q_{x_0}={\bf q}({\bf p}^{-1}(x_0))$ contains precisely the points $s\in G/Q$ such that $x_0\in\mathcal P_s$.

\begin{proposition}\label{firstextension}
Suppose $P$ is reduced mod $Q$ and  vice versa. Let $x_0\in G/P$ and $\mathfrak f\in\mathscr B_{x_0}$ be a germ of $Q$-cycle-respecting map.  There exist connected open sets $\mathcal V_1, \mathcal V_2\subset\mathcal U:=G/(P\cap Q)$ containing ${\bf q}^{-1}(\mathcal Q_{x_0})$, ${\bf q}^{-1}(\mathcal Q_{\mathfrak f(x_0)})$ respectively, and a biholomorphic map $\mathcal F:\mathcal V_1\rightarrow \mathcal V_2$, a section $\mathfrak F\in ({\bf p}^{-1}\mathscr B)(\mathcal V_1)$ such that,
under the canonical identification $({\bf p}^{-1}\mathscr B)_{\zeta}\cong\mathscr B_{{\bf p}(\zeta)}$ for $\zeta\in\mathcal V_1$, we have (i) ${\bf p}(\mathcal F(\zeta))=\mathfrak F_\zeta({\bf p}(\zeta))$ for every $\zeta\in\mathcal V_1$; and (ii) $\mathfrak F_\xi=\mathfrak f$ for every $\xi\in{\bf p}^{-1}(x_0)$.
\end{proposition}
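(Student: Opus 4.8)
The plan is to build the biholomorphism $\mathcal F$ on a neighborhood of the fiber ${\bf q}^{-1}(\mathcal Q_{x_0})$ by "spreading" the germ $\mathfrak f$ along the $Q$-cycles through $x_0$, using Proposition~\ref{good neighborhood} to get a representative of $\mathfrak f$ with the strong $Q$-cycle-respecting property on both sides. First I would fix a representative $f:U\to f(U)$ of $\mathfrak f$ and, applying Proposition~\ref{good neighborhood}, shrink to an open set $V_1\subset U$ so that for every $Q$-cycle $\mathcal P_s$ meeting $V_1$ we have $f(\mathcal P_s\cap V_1)\subset\mathcal P_{s'}$ for a (unique, by Proposition~\ref{parameter space} since $Q$ is reduced mod $P$) point $s'\in G/Q$, and symmetrically for $f^{-1}$ on $V_2=f(V_1)$. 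This gives a well-defined map on cycle-parameters: $s\mapsto s'$, defined for $s$ in the open set $\{s\in G/Q:\mathcal P_s\cap V_1\neq\emptyset\}\supset\mathcal Q_{x_0}$, and one checks it is holomorphic (e.g. via Proposition~\ref{identification}: the tangent space of $\mathcal P_{s'}$ at $f(x)$ is the differential image of the tangent space of $\mathcal P_s$ at $x$, so the assignment factors through a holomorphic map of Grassmann bundles).

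Next I would define $\mathcal F$ on $\mathcal V_1:=\{\zeta\in\mathcal U:{\bf p}(\zeta)\in V_1\}$ — which is an open neighborhood of ${\bf q}^{-1}(\mathcal Q_{x_0})$ since ${\bf q}^{-1}(\mathcal Q_{x_0})={\bf q}^{-1}({\bf q}({\bf p}^{-1}(x_0)))$ consists exactly of the $\zeta$ lying on some $Q$-cycle through $x_0$, all of whose base-points we may assume (after further shrinking $V_1$ to be a union of relatively open pieces of these cycles, or rather invoking the convexity in Proposition~\ref{good neighborhood}) land in $V_1$. Concretely, for $\zeta\in\mathcal V_1$ with $x={\bf p}(\zeta)$, $s={\bf q}(\zeta)$, set $\mathcal F(\zeta)$ to be the unique point of $\mathcal U$ with ${\bf p}(\mathcal F(\zeta))=f(x)$ and ${\bf q}(\mathcal F(\zeta))=s'$; this is well-defined and holomorphic because $f(x)\in\mathcal P_{s'}$ by construction, and $\mathcal U\hookrightarrow G/P\times G/Q$ is a closed embedding so $\mathcal F=(f\circ{\bf p},\,(\,\cdot\,)'\circ{\bf q})$ visibly lands in $\mathcal U$. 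The symmetric construction using $f^{-1}$ on a neighborhood of $\mathcal Q_{\mathfrak f(x_0)}$ provides a two-sided inverse, so $\mathcal F:\mathcal V_1\to\mathcal V_2$ is a biholomorphism onto $\mathcal V_2:=\{\zeta:{\bf p}(\zeta)\in V_2\}\supset{\bf q}^{-1}(\mathcal Q_{\mathfrak f(x_0)})$; I should make sure $\mathcal V_1$ is connected, which follows by shrinking $V_1$ to be connected and noting ${\bf p}$ has connected (irreducible) fibers.

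For the section $\mathfrak F\in({\bf p}^{-1}\mathscr B)(\mathcal V_1)$: recall $({\bf p}^{-1}\mathscr B)_\zeta\cong\mathscr B_{{\bf p}(\zeta)}$, so I must continuously assign to each $\zeta\in\mathcal V_1$ a germ in $\mathscr B_{{\bf p}(\zeta)}$ whose value at ${\bf p}(\zeta)$ equals ${\bf p}(\mathcal F(\zeta))$. The natural choice is to take the germ at ${\bf p}(\zeta)=x$ of the fixed representative $f$ itself — this requires $x\in V_1$, which holds on $\mathcal V_1$ by construction, and it manifestly gives a continuous (indeed locally constant in the fiber direction) section of ${\bf p}^{-1}\mathscr E$ over $\mathcal V_1$, because over the open set $\{{\bf p}\in V_1\}$ the assignment $\zeta\mapsto \text{(germ of }f\text{ at }{\bf p}(\zeta))$ is exactly ${\bf p}^{-1}$ applied to the section $\mathfrak f\in\mathscr B(V_1)$. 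Then (i) ${\bf p}(\mathcal F(\zeta))=f({\bf p}(\zeta))=\mathfrak F_\zeta({\bf p}(\zeta))$ holds by the very definition of $\mathcal F$, and (ii) for $\xi\in{\bf p}^{-1}(x_0)$ we get $\mathfrak F_\xi=$ germ of $f$ at $x_0$ $=\mathfrak f$ under the identification. The main obstacle I anticipate is the bookkeeping in the first step — verifying that the parameter map $s\mapsto s'$ is genuinely holomorphic and single-valued on a full neighborhood of $\mathcal Q_{x_0}$ (not just generically), for which the reducedness hypotheses and the rigidity statement "a cycle is determined by its tangent space at one point" from Proposition~\ref{identification} are exactly what is needed; everything after that is a formal consequence of the closed embedding $\mathcal U\subset G/P\times G/Q$ and the sheaf-theoretic definitions.
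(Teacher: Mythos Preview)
There is a genuine gap in your construction: your $\mathcal V_1:={\bf p}^{-1}(V_1)$ does \emph{not} contain ${\bf q}^{-1}(\mathcal Q_{x_0})$. A point $\zeta\in{\bf q}^{-1}(\mathcal Q_{x_0})$ is any point of $\mathcal U$ lying over some $Q$-cycle through $x_0$, so ${\bf p}(\zeta)$ ranges over the entire union $\bigcup_{s\in\mathcal Q_{x_0}}\mathcal P_s$ of (compact, positive-dimensional) $Q$-cycles through $x_0$, which certainly is not contained in the small open set $V_1$. Shrinking $V_1$ only makes this worse, and the convexity in Proposition~\ref{good neighborhood} concerns connectedness of $\mathcal P_s\cap V_1$, not containment of $\mathcal P_s$ in $V_1$. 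Consequently your map $\mathcal F(\zeta)=(f({\bf p}(\zeta)),s')$ is only defined on ${\bf p}^{-1}(V_1)$, and your section $\mathfrak F_\zeta=$ (germ of $f$ at ${\bf p}(\zeta)$) likewise; neither extends over ${\bf q}^{-1}(\mathcal Q_{x_0})$ as the proposition requires.

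The missing idea, which is exactly where the hypothesis ``$P$ is reduced mod $Q$'' enters, is a bootstrap through $G/Q$. The paper first uses ``$Q$ reduced mod $P$'' to embed $\mathcal U\subset Gr(k,T(G/P))$ and realize your $s\mapsto s'$ as a genuine biholomorphism $f^\sharp:V_1^\sharp\to V_2^\sharp$ on the open set $V_1^\sharp:={\bf q}({\bf p}^{-1}(V_1))\subset G/Q$, which \emph{does} contain $\mathcal Q_{x_0}$. Then it uses the dual embedding $\mathcal U\subset Gr(\ell,T(G/Q))$ (available because $P$ is reduced mod $Q$) and takes the differential $[df^\sharp]$, which is defined on all of ${\bf q}^{-1}(V_1^\sharp)\supset{\bf q}^{-1}(\mathcal Q_{x_0})$; one checks $[df^\sharp]$ agrees with $[df]$ on ${\bf p}^{-1}(V_1)$ and hence lands in $\mathcal U$. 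This is the map $\mathcal F$, now on the correct domain $\mathcal V_1={\bf q}^{-1}(V_1^\sharp)$. The section $\mathfrak F$ is then obtained not by pulling back the fixed germ of $f$, but by locally descending $[df^\sharp]$ through ${\bf p}$ to new local $Q$-cycle-respecting biholomorphisms $f_{\mathcal W}$ on $G/P$ (which agree with $f$ only where both are defined) and taking their germs. Without this second Grassmannian identification you have no mechanism to push $\mathcal F$ or $\mathfrak F$ beyond ${\bf p}^{-1}(V_1)$.
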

\begin{proof}
Since $Q$ is reduced mod $P$, by Proposition~\ref{identification}, $\mathcal U$ can be identified with a closed complex submanifold of the Grassmannian bundle $Gr(k,T(G/P))$ of $k$-dimensional holomorphic tangent subspaces of $G/P$, where $k=\dim_{\mathbb C}\mathcal P_s$ for any $s\in G/Q$. Here, a point $\zeta\in\mathcal U$ is then identified with the holomorphic tangent space of $\mathcal P_{{\bf q}(\zeta)}$ at ${\bf p}(\zeta)$.

By Proposition~\ref{good neighborhood}, we can choose a local biholomorphism $f:V_1\rightarrow V_2$, representing the germ $\mathfrak f\in\mathscr B_{x_0}$, where $V_1$ is a neighborhood containing $x_0$ and $V_2=f(V_1)$, satisfying the following condition:

\noindent($\dagger$) For any $Q$-cycle $\mathcal P_s$ intersecting $V_1$, we have $f(\mathcal P_s\cap V_1)\subset\mathcal P_{s'}$ for some $s'\in G/Q$ and the analogous condition also holds for $f^{-1}$ on $V_2$.


Under the identifications made on $\mathcal U$ as described above, the differential of $f$ induces a local biholomorphism on $\mathcal U$
$$
[df]:{\bf p}^{-1}(V_1)\rightarrow {\bf p}^{-1}(V_2),
$$
where ${\bf p}^{-1}(V_j)$ is regarded as a closed complex submanifold of $Gr(k,T(V_j))$ for $j=1,2$.
Now take a point $s\in G/Q$ such that $\mathcal P_s\cap V_1\neq\emptyset$. By our hypotheses, $f(\mathcal P_s\cap V_1)\subset\mathcal P_{s'}$ for some $s'\in G/Q$. This implies that $[df]$ is fiber-preserving with respect to $\bf q$. Hence, if we let $V_1^\sharp:={\bf q}({\bf p}^{-1}(V_1))$ and $V_2^\sharp:={\bf q}({\bf p}^{-1}(V_2))$, which are connected open sets on $G/Q$, then $[df]$ induces a holomorphic map $f^\sharp :V_1^\sharp\rightarrow V_2^\sharp$, which is also a biholomorphism since by the condition $(\dagger)$, it is easily seen that $[d(f^{-1})]=[df]^{-1}$.

Let $\ell=\dim_{\mathbb C}\mathcal Q_x$ for any $x\in G/P$. The differential of $f^\sharp$ gives a biholomorphism $$[df^\sharp]:Gr(\ell, T(V_1^\sharp))\rightarrow Gr(\ell, T(V_2^\sharp)),$$ where $Gr(\ell, T(V_1^\sharp))$ and $Gr(\ell, T(V_2^\sharp))$ are the restrictions of the Grassmannian bundle of $\ell$-dimensional holomorphic tangent subspaces of $G/Q$ to $V_1^\sharp$ to $V_2^\sharp$ respectively. Now observe that as $P$ is also reduced mod $Q$, we can thus, by the same token, identify $\mathcal U$ with a complex submanifold of $Gr(\ell,T(G/Q))$. Under such identification, we can regard ${\bf q}^{-1}(V_1^\sharp)\subset Gr(\ell,T(V_1^\sharp))$ and ${\bf q}^{-1}(V_2^\sharp)\subset Gr(\ell,T(V_2^\sharp))$ as closed complex submanifolds. We are going to show that $[df^\sharp]$ sends ${\bf q}^{-1}(V_1^\sharp)$ onto ${\bf q}^{-1}(V_2^\sharp)$ and moreover, $\left.[df^\sharp]\right|_{{\bf p}^{-1}(V_1)}=[df]$. Note that ${\bf p}^{-1}(V_1)$ is open in $\mathcal U$ and hence is open in ${\bf q}^{-1}(V_1^\sharp)={\bf q}^{-1}({\bf q}({\bf p}^{-1}(V_1)))$.

Take a point $\zeta\in{\bf p}^{-1}(V_1)$ and let $x:={\bf p}(\zeta)$ and $s:={\bf q}(\zeta)$. When $\zeta$ is identified with a point in $Gr(\ell, T(G/Q))$, then $\zeta$ is the holomorphic tangent space of $\mathcal Q_x$ at $s$. Through the double fibration, $\mathcal Q_x$ consists of precisely the points $t$ such that $x\in\mathcal P_t$. Since $f(\mathcal P_t\cap V_1)\subset\mathcal P_{f^\sharp(t)}$, for every $t\in\mathcal Q_x\cap V_1^\sharp$, we have $f(x)\in\mathcal P_{f^\sharp(t)}$ and hence $f^\sharp(\mathcal Q_x\cap V_1^\sharp)\subset\mathcal Q_{f(x)}$ (it is an open inclusion since $f^\sharp$ is biholomorphic). By taking the differential of $f^\sharp$, we get that $[df^\sharp](\zeta)$ is the holomorphic tangent space of $\mathcal Q_{f(x)}$ at $f^\sharp (s)$. Therefore, when $[df^\sharp](\zeta)$ is regarded as a point in $\mathcal U$, we have
$$
{\bf p}([df^\sharp](\zeta))=f(x) \textrm{\,\,\,\,\,\,and\,\,\,\,\,\,}
{\bf q}([df^\sharp](\zeta))=f^\sharp (s).
$$

Now recall that $f^\sharp$ was defined by making use of the fact that $[df]$ is fiber-preserving with respect to ${\bf q}$ and thus
$f^\sharp(s)={\bf q}([df](\zeta)).$
In addition, we obviously have ${\bf p}([df](\zeta))=f(x)$ by our identifications and the definition of $[df]$. Since $({\bf p}, {\bf q}):G/(P\cap Q)\rightarrow G/P\times G/Q$ is injective, we deduce that $[df](\zeta)=[df^\sharp](\zeta)$. Since $\zeta\in{\bf p}^{-1}(V_1)$ is arbitrary, it follows that $[df^\sharp]:{\bf q}^{-1}(V_1^\sharp)\rightarrow Gr(\ell,T(V_2^\sharp))$ is a holomorphic extension of $[df]:{\bf p}^{-1}(V_1)\rightarrow{\bf p}^{-1}(V_2)$. Furthermore, as ${\bf p}^{-1}(V_2)\subset{\bf q}^{-1}(V_2^\sharp)\subset Gr(\ell,T(V_2^\sharp))$, and the latter inclusion is an embedding of a closed complex submanifold, we conclude that we have $[df^\sharp]({\bf q}^{-1}(V_1^\sharp))\subset{\bf q}^{-1}(V_2^\sharp)$. Replacing $f$ by $f^{-1}$ and $V_1$ by $V_2$, we get a biholomorphism
$$
	[df^\sharp]:{\bf q}^{-1}(V_1^\sharp)\rightarrow{\bf q}^{-1}(V_2^\sharp).
$$

Next, we are going to construct a section in $({\bf p}^{-1}\mathscr B)({\bf q}^{-1}(V_1^\sharp))$.
Let $\zeta\in{\bf q}^{-1}(V_1^\sharp)$. Choose a connected open set $\mathcal W\subset\mathcal U$ such that $\zeta\in \mathcal W\subset{\bf q}^{-1}(V_1^\sharp)$. Recall that we have previously shown that $[df^\sharp]:{\bf q}^{-1}(V_1^\sharp)\rightarrow{\bf q}^{-1}(V_2^\sharp)$ is a holomorphic extension of $[df]:{\bf p}^{-1}(V_1)\rightarrow {\bf p}^{-1}(V_2)$. In particular, for any $x\in G/P$, ${\mathbf p} \circ [df^\sharp]$ is constant along each connected component of ${\bf p}^{-1}(x)\cap{\bf q}^{-1}(V_1^\sharp)$ whenever the intersection is non-empty. This follows from the fact that $[df]$, as the differential of $f$, satisfies the same property on ${\bf p}^{-1}(V_1)$, which is an open subset of ${\bf q}^{-1}(V_1^\sharp)$ and such analytic property is preserved in any holomorphic extension by the identity theorem for holomorphic functions. The same is true for $[df^\sharp]^{-1}$. Thus, by choosing the open set $\mathcal W$ sufficiently small, we see that $\left.[df^\sharp]\right|_{\mathcal W}:\mathcal W\rightarrow [df^\sharp](\mathcal W)$ descends to a local biholomorphism on $G/P$, which we denote by $f_{\mathcal W}$. To see that $f_{\mathcal W}$ is $Q$-cycle-respecting, it suffices to note that $f_{\mathcal W}$ descends from $[df^\sharp]$, which respects the fibers of ${\bf q}$ and this translates precisely to the condition that $f_{\mathcal W}$ is $Q$-cycle-respecting. Therefore, we deduce that $f_{\mathcal W} \in \mathscr B({\mathbf p}(\mathcal W))$. By taking the direct limit over $\mathcal W$, we then get a germ $\mathfrak f_{\zeta} \in \mathscr B_{{\mathbf p}(\zeta)}$.


Now we have obtained a map to the espace \'{e}tal\'{e} of ${\bf p}^{-1}\mathscr B$ over ${\bf q}^{-1}(V_1^\sharp)$, denoted by
$$
\mathfrak F:{\bf q}^{-1}(V_1^\sharp)\rightarrow\bigcup_{\zeta\in {\bf q}^{-1}(V_1^\sharp)}\mathscr B_{{\bf p}(\zeta)},
$$
so that $\mathfrak F(\zeta)=\mathfrak f_\zeta$. To see that $\mathfrak F$ is indeed a section in $({\bf p}^{-1}\mathscr B)({\bf q}^{-1}(V_1^\sharp))$, we take a point $\zeta\in{\bf q}^{-1}(\mathcal Q_{x_0})$ and an open neighborhood $\mathcal W\subset{\bf q}^{-1}(V_1^\sharp)$ containing $\zeta$, such that $\left.[df^\sharp]\right|_{\mathcal W}$ descends to $f_{\mathcal W}\in\mathscr B({\bf p}(\mathcal W))$, as before. Then for any $\eta\in\mathcal W$, it is by our construction that the germ of $f_{\mathcal W}$ at ${\bf p}(\eta)$ is just $\mathfrak F(\eta)=\mathfrak f_{\eta}$.
That is, on the open neighborhood $\mathcal W$ of $\zeta$, the map $\left.\mathfrak F\right|_{\mathcal W}$ is given by taking the germs of a section in $\mathscr B({\bf p}(\mathcal W))$. Hence, $\mathfrak F\in ({\bf p}^{-1}\mathscr B)({\bf q}^{-1}(V_1^\sharp))$.

Now let $\mathcal V_j:={\bf q}^{-1}(V_j^\sharp)$ for $j=1,2$ and $\mathcal F:\mathcal V_1\overset{\cong}\rightarrow\mathcal V_2$, where $\mathcal F:=[df^\sharp]|_{\mathcal V_1}$.
It remains to check that under the canonical identification $({\bf p}^{-1}\mathscr B)_{\zeta}\cong\mathscr B_{{\bf p}(\zeta)}$ for $\zeta\in\mathcal V_1$, we have
 ${\bf p}(\mathcal F(\zeta))=\mathfrak F_\zeta({\bf p}(\zeta))=\mathfrak F(\zeta)({\bf p}(\zeta))$ for every $\zeta\in\mathcal V_1$; and $\mathfrak f=\mathfrak F_\xi=\mathfrak F(\xi)$ for every $\xi\in{\bf p}^{-1}(x_0)$.

The first half is clear, it follows directly from how we construct $\mathfrak F(\zeta)$ by descending $[df^\sharp]$ locally and the fact that $\mathcal F$ is just $[df^\sharp]$. For the second half, since ${\bf p}^{-1}(V_1)$ is contained in the domain of definition of $[df]$, if $\xi\in{\bf p}^{-1}(x_0)$ and $\mathcal W$ is an open neighborhood of $\xi$ in $\mathcal U$ such that $\mathcal W\subset{\bf p}^{-1}(V_1)$, then for every $\eta\in\mathcal W$, we have $\mathfrak F(\eta)({\bf p}(\eta))={\bf p}([df^\sharp](\eta))={\bf p}([df](\eta))=f({\bf p}(\eta))$. Thus, $\mathfrak F_\xi=\mathfrak f$.
\end{proof}

\subsection{Global extension}$\,$

Recall that for every $k\in\mathbb N^+$, we have the $k$-th $Q$-tower ${\bf p}_k:\mathcal T^k_{x_0}\rightarrow G/P$ at $x_0$. We will use the same symbol ${\bf p}_k$ for the evaluation map of $\mathcal T^k_{\mathfrak f(x_0)}$. Using Proposition~\ref{firstextension}, we will now prove that a germ of $Q$-cycle-respecting map at $x_0$ induces a biholomorphic map on $\mathcal T^k_{x_0}$ and a global section of the inverse image of $\mathscr B$ related in a similar way as in Proposition~\ref{firstextension}.

\begin{proposition}\label{kth extension}
Suppose $P$ is reduced mod $Q$ and vice versa. Let $x_0\in G/P$ and $\mathfrak f\in\mathscr B_{x_0}$ be a germ of $Q$-cycle-respecting map. For every $k\in\mathbb N^+$, there exist a biholomorphic map $F_k:\mathcal T^k_{x_0}\rightarrow\mathcal T^k_{\mathfrak f(x_0)}$ and a section $\mathfrak F_k\in ({\bf p}_k^{-1}\mathscr B)(\mathcal T^k_{x_0})$ such that under the canonical identification $({\bf p}_k^{-1}\mathscr B)_{\mu}\cong\mathscr B_{{\bf p}_k(\mu)}$ for $\mu\in\mathcal T^k_{x_0}$, we have (i) ${\bf p}_k(F_k(\mu))=(\mathfrak F_k)_\mu({\bf p}_k(\mu))$ for every $\mu\in\mathcal T^k_{x_0}$; and (ii) $(\mathfrak F_k)_\nu=\mathfrak f$ for every $\nu\in\mathcal D^k_{x_0}\subset\mathcal T^k_{x_0}$, where $\mathcal D^k_{x_0}$ is the diagonal preimage of $x_0$ in $\mathcal T^k_{x_0}$ (Definition~\ref{diagonal}).
\end{proposition}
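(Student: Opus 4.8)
The plan is to induct on $k$, using Proposition~\ref{firstextension} as both the base case ($k=1$) and the inductive engine. The guiding principle is that $\mathcal T^{k+1}_{x_0}$ is built from $\mathcal T^k_{x_0}$ by one more round of the two pullback constructions (first pulling back ${\bf p}:\mathcal U\to G/P$ along ${\bf p}_k$, then pulling back ${\bf q}:\mathcal U\to G/Q$), and Proposition~\ref{firstextension} precisely describes how a $Q$-cycle-respecting germ propagates through one such round. So the inductive step amounts to: given the biholomorphism $F_k:\mathcal T^k_{x_0}\to\mathcal T^k_{\mathfrak f(x_0)}$ and the section $\mathfrak F_k\in({\bf p}_k^{-1}\mathscr B)(\mathcal T^k_{x_0})$, produce $F_{k+1}$ and $\mathfrak F_{k+1}$ with the analogous properties.

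For the base case, apply Proposition~\ref{firstextension} to get $\mathcal F:\mathcal V_1\to\mathcal V_2$ and $\mathfrak F\in({\bf p}^{-1}\mathscr B)(\mathcal V_1)$ with ${\bf q}^{-1}(\mathcal Q_{x_0})\subset\mathcal V_1$. Recall $\mathcal T^1_{x_0}={\bf q}_1^*\mathcal U$ where ${\bf q}_1={\bf q}|_{{\bf p}^{-1}(x_0)}$; since ${\bf p}^{-1}(x_0)$ maps under ${\bf q}_1$ into $\mathcal Q_{x_0}$, the fibered product $\mathcal T^1_{x_0}$ sits inside ${\bf p}^{-1}(x_0)\times\mathcal V_1$ — more precisely $\mathcal T^1_{x_0}=\{(\xi,\zeta): {\bf q}(\xi)={\bf q}(\zeta),\ {\bf p}(\xi)=x_0\}$, and since ${\bf q}(\xi)\in\mathcal Q_{x_0}$ forces $\zeta\in{\bf q}^{-1}(\mathcal Q_{x_0})\subset\mathcal V_1$, the map $\mathcal F$ is defined on the relevant locus. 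Define $F_1$ on $\mathcal T^1_{x_0}$ by sending $(\xi,\zeta)$ to the pair obtained by applying the germ $\mathfrak f$ to the first coordinate and $\mathcal F$ to the second; property~(i) of Proposition~\ref{firstextension}, namely ${\bf p}(\mathcal F(\zeta))=\mathfrak F_\zeta({\bf p}(\zeta))$, guarantees this lands in $\mathcal T^1_{\mathfrak f(x_0)}$ and is compatible with the evaluation maps, while property~(ii), $\mathfrak F_\xi=\mathfrak f$ on ${\bf p}^{-1}(x_0)$, gives condition~(ii) on $\mathcal D^1_{x_0}={\bf p}_1^{-1}(x_0)$. The section $\mathfrak F_1$ is the pullback ${\bf\widetilde q}_1^*\mathfrak F$ of $\mathfrak F$ along the map ${\bf\widetilde q}_1:\mathcal T^1_{x_0}\to\mathcal U$, and the identification $({\bf p}_1^{-1}\mathscr B)_\mu\cong\mathscr B_{{\bf p}_1(\mu)}$ makes property~(i) for $\mathfrak F_1$ a direct consequence of property~(i) for $\mathfrak F$.

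For the inductive step, suppose $F_k,\mathfrak F_k$ are given. The key point is that $\mathfrak F_k\in({\bf p}_k^{-1}\mathscr B)(\mathcal T^k_{x_0})$ provides, at each $\mu\in\mathcal T^k_{x_0}$, a $Q$-cycle-respecting germ at ${\bf p}_k(\mu)$; running Proposition~\ref{firstextension} (in a parametrized form, or pointwise followed by a gluing argument using that $\mathcal T^k_{x_0}$ is covered by neighborhoods on which $\mathfrak F_k$ is represented by an actual $Q$-cycle-respecting biholomorphism) yields, over ${\bf p}_k^*\mathcal U$, a biholomorphism of the pullback bundle together with a section of ${\bf q}_{k+1}^{-1}\mathscr B$-type data; pulling back once more along ${\bf\widetilde q}_{k+1}$ as in the base case produces $F_{k+1}$ and $\mathfrak F_{k+1}$. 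That $F_{k+1}$ restricts to $F_k$ under $\pi_{k+1,k}$ — hence, by induction, sends $\mathcal D^{k+1}_{x_0}$ into $\mathcal D^{k+1}_{\mathfrak f(x_0)}$ and there equals the germ $\mathfrak f$ — follows from tracking the fibered-product structure: $\pi_{k+1,k}$ forgets the last point and last cycle, and the new data was constructed by applying the old $F_k$ on the $\mathcal T^k$-factor. Properties~(i) and~(ii) are inherited verbatim from those in Proposition~\ref{firstextension} via the canonical stalk identifications.

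The main obstacle I anticipate is the \emph{globalization / parametrized version of Proposition~\ref{firstextension}}: Proposition~\ref{firstextension} is stated for a single germ and produces only local open sets $\mathcal V_1,\mathcal V_2$, whereas here I need the construction to work coherently over the whole (compact, hence non-affine) manifold $\mathcal T^k_{x_0}$ with the germ varying as the section $\mathfrak F_k$. The resolution is that $\mathfrak F_k$, being a section of an espace-étalé sheaf, is locally represented by genuine $Q$-cycle-respecting biholomorphisms, so one applies Proposition~\ref{firstextension} on each member of an open cover of $\mathcal T^k_{x_0}$ and checks the resulting local biholomorphisms of the pullback bundle agree on overlaps — agreement follows from the uniqueness built into the differential-geometric description ($[df^\sharp]$ is determined by $f$, and on overlaps the germs of $\mathfrak F_k$ coincide) together with the Hausdorffness of the espace étalé of $\mathscr B$. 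Once the local pieces are seen to glue, the bundle structure $\mathcal T^{k-j}_{x_0}\to\mathcal T^k_{x_0}\to\mathcal T^j_{x_0}$ and the explicit fibered-product model of the $Q$-towers make the bookkeeping of evaluation maps and the diagonal preimages routine.
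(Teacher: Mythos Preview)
Your proposal is correct and follows essentially the same route as the paper: induction on $k$, with the base case obtained by restricting the data $(\mathcal F,\mathfrak F)$ of Proposition~\ref{firstextension} to ${\bf q}^{-1}(\mathcal Q_{x_0})\cong\mathcal T^1_{x_0}$, and the inductive step obtained by applying the $k=1$ case fiberwise along $\pi_{k+1,k}:\mathcal T^{k+1}_{x_0}\to\mathcal T^k_{x_0}$ using the germs $(\mathfrak F_k)_\alpha$, then checking holomorphicity/continuity via local representatives of $\mathfrak F_k$ and the map $\widetilde{\bf q}_{k+1}$ --- which is exactly the ``globalization'' you anticipate as the main obstacle. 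One small imprecision: in the base case you write ``applying the germ $\mathfrak f$ to the first coordinate'' $\xi\in{\bf p}^{-1}(x_0)\subset\mathcal U$, but $\mathfrak f$ acts on $G/P$, not on $\mathcal U$; what is needed (and what the paper does) is to apply $\mathcal F$ (equivalently $[df]$) to $\xi$, which lands in ${\bf p}^{-1}(\mathfrak f(x_0))$ because ${\bf p}\circ\mathcal F=f\circ{\bf p}$ on ${\bf p}^{-1}(V_1)$.
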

\begin{proof}
By definition, $\mathcal T^1_{x_0}$ is the pullback bundle ${\bf q}_1^*\mathcal U$ of ${\bf q}:\mathcal U\rightarrow G/Q$ by ${\bf q}_1:{\bf p}^{-1}(x_0)\rightarrow G/Q$, where ${\bf q}_1$ is the restriction of $\bf q$ to ${\bf p}^{-1}(x_0)$. Since ${\bf q}_1:{\bf p}^{-1}(x_0)\rightarrow{\bf q}_1({\bf p}^{-1}(x_0))=\mathcal Q_{x_0}$ is a biholomorphism, we see that $\mathcal T^1_{x_0}$ is biholomorphic to ${\bf q}^{-1}(\mathcal Q_{x_0})$. Similarly, we also have $\mathcal T^1_{\mathfrak f(x_0)}\cong {\bf q}^{-1}(\mathcal Q_{\mathfrak f(x_0)})$.
On the other hand, following the notations and the context in Proposition~\ref{firstextension}, if $\zeta\in{\bf q}^{-1}(\mathcal Q_{x_0})$, then
$$
{\bf q}(\mathcal F(\zeta))={\bf q}([df^\sharp](\zeta))=f^\sharp({\bf q}(\zeta))\in f^\sharp(\mathcal Q_{x_0})\subset\mathcal Q_{\mathfrak f(x_0)}
$$
and thus $\mathcal F(\zeta)\in{\bf q}^{-1}(\mathcal Q_{\mathfrak f(x_0)})$. Therefore, the restriction of $\mathcal F$ gives a biholomorphism $F_1':{\bf q}^{-1}(\mathcal Q_{x_0})\rightarrow{\bf q}^{-1}(\mathcal Q_{\mathfrak f(x_0)})$, where $F_1':=\left.\mathcal F\right|_{{\bf q}^{-1}(\mathcal Q_{x_0})}$ and this canonically induces a biholomorphism $F_1:\mathcal T^1_{x_0}\rightarrow\mathcal T^1_{\mathfrak f(x_0)}$ through the identification $\mathcal T^1_{x_0}\cong{\bf q}^{-1}(\mathcal Q_{x_0})$.
Furthermore, under the same identification, the evaluation map ${\bf p}_1:\mathcal T^1_{x_0}\rightarrow G/P$ corresponds to the canonical projection ${\bf p}:{\bf q}^{-1}(\mathcal Q_{x_0})\rightarrow G/P$. Therefore, by again restricting to ${\bf q}^{-1}(\mathcal Q_{x_0})\cong\mathcal T^1_{x_0}$, the section $\mathfrak F$ obtained in Proposition~\ref{firstextension} gives a section $\mathfrak F_1\in ({\bf p}_1^{-1}\mathscr B)(\mathcal T^1_{x_0})$. The case for $k=1$ is thus settled.

Suppose now the biholomorphism $F_k$ and global section $\mathfrak F_k$ have been constructed for some $k\geq 1$. Consider the holomorphic fiber bundle $\pi:\mathcal T^{k+1}_{x_0}\rightarrow\mathcal T^k_{x_0}$, where $\pi:=\pi_{k+1,k}$, which is defined just before Definition~\ref{diagonal}. Similarly, write $\Pi:\mathcal T^{k+1}_{\mathfrak f(x_0)}\rightarrow\mathcal T^k_{\mathfrak f(x_0)}$ for the corresponding bundle at $\mathfrak f(x_0)$.

Let $\alpha\in\mathcal T^k_{x_0}$. As we have the canonical biholomorphisms $\pi^{-1}(\alpha)\cong\mathcal T^1_{{\bf p}_k(\alpha)}$ and $\Pi^{-1}(F_k(\alpha))\cong\mathcal T^1_{{\bf p}_k(F_k(\alpha))}$ and also
$(\mathfrak F_k)_\alpha({\bf p}_k(\alpha))={\bf p}_k(F_k(\alpha))$,
by using the germ $(\mathfrak F_k)_{\alpha}\in({\bf p}_k^{-1}\mathscr B)_\alpha\cong\mathscr B_{{\bf p}_k(\alpha)}$, we obtain, from the case $k=1$, a biholomorphism
$F^\alpha:\pi^{-1}(\alpha)\rightarrow \Pi^{-1}(F_k(\alpha))$,
and a section $\mathfrak F^\alpha\in ({\bf p}_{k+1}^{-1}\mathscr B)(\pi^{-1}(\alpha))$.
Since $\alpha\in\mathcal T^k_{x_0}$ is arbitrary, we have thus set-theoretically constructed a bijective map $F_{k+1}:\mathcal T^{k+1}_{x_0}\rightarrow\mathcal T^{k+1}_{\mathfrak f(x_0)}$ and a map $\mathfrak F_{k+1}$ from $\mathcal T^{k+1}_{x_0}$ to the espace \'etal\'e of ${\bf p}_{k+1}^{-1}\mathscr B$. We will now argue that $F_{k+1}$ is holomorphic and $\mathfrak F_{k+1}$ is a section in $({\bf p}_{k+1}^{-1}\mathscr B)(\mathcal T^{k+1}_{x_0})$.

Let $\mu\in\mathcal T^{k+1}_{x_0}$, $\alpha:=\pi(\mu)\in\mathcal T^k_{x_0}$ and $x:={\bf p}_k(\alpha)\in G/P$. Since $\mathfrak F_k\in ({\bf p}_k^{-1}\mathscr B)(\mathcal T^k_{x_0})$, there are an open neighborhood $\mathscr V\subset\mathcal T^k_{x_0}$ of $\alpha$, an open neighborhood $V\subset G/P$ of $x$, and a section $\mathfrak F_V\in\mathscr B(V)$ such that ${\bf p}_k(\mathscr V)\subset V$ and $(\mathfrak F_k)_\eta=(\mathfrak F_V)_{{\bf p}_k(\eta)}$ for every $\eta\in\mathscr V$.

By Proposition~\ref{firstextension}, we can, by choosing $V$ satisfying the condition ($\dagger$) therein,
obtain a biholomorphic map $\mathcal F:\mathcal V\rightarrow\mathcal F(\mathcal V)\subset\mathcal U$, where $\mathcal V:={\bf q}^{-1}({\bf q}({\bf p}^{-1}(V)))$, and a section $\mathfrak F\in({\bf p}^{-1}\mathscr B)(\mathcal V)$ satisfying the conditions stated in Proposition~\ref{firstextension}. Since $\pi:\mathcal T^{k+1}_{x_0}\rightarrow\mathcal T^k_{x_0}$ is continuous, there is a neighborhood ${\bf V}\subset\mathcal T^{k+1}_{x_0}$ of $\mu$ such that $\pi({\bf V})\subset\mathscr V$. Now for $\eta\in\bf V$, if we let $\beta:=\pi(\eta)\in\mathscr V$, since $\pi^{-1}(\beta)\cong\mathcal T^1_{{\bf p}_k(\beta)}\cong{\bf q}^{-1}(\mathcal Q_{{\bf p}_k(\beta)})$ canonically, $\eta$ corresponds to a point $\eta'\in{\bf q}^{-1}(\mathcal Q_{{\bf p}_k(\beta)})\subset\mathcal V$. We let $\varphi:{\bf V}\rightarrow\mathcal V$ be the  map defined by $\varphi(\eta)=\eta'$. From how we defined the $Q$-towers in Section~\ref{towersection}, $\varphi$ is actually $\widetilde{\bf q}_{k+1}|_{\bf V}$, and in particular, it is continuous.
By tracing back how we constructed $(\mathfrak F_{k+1})_\eta$, we see that $(\mathfrak F_{k+1})_\eta=\mathfrak F_{\varphi(\eta)}$. Since $\mathfrak F\in({\bf p}^{-1}\mathscr B)(\mathcal V)$, there are an open neighborhood $\mathcal W\subset\mathcal V$ of $\varphi(\mu)$, an open neighborhood $W\subset G/P$ of ${\bf p}(\varphi(\mu))$ and a section $\mathfrak F_W\in\mathscr B(W)$ such that ${\bf p}(\mathcal W)\subset W$ and $\mathfrak F_\lambda=(\mathfrak F_W)_{{\bf p}(\lambda)}$ for every $\lambda\in\mathcal W$. Now note that ${\bf p}\circ\varphi={\bf p}_{k+1}:{\bf V}\subset\mathcal T^{k+1}_{x_0}\rightarrow G/P$. So if we define the open set ${\bf W}:=\varphi^{-1}(\mathcal W)\subset{\bf V}\subset\mathcal T^{k+1}_{x_0}$, then ${\bf p}_{k+1}({\bf W})\subset W$ and for every $\eta\in{\bf W}$, we have
$$
(\mathfrak F_{k+1})_\eta=\mathfrak F_{\varphi(\eta)}=(\mathfrak F_W)_{{\bf p}(\varphi(\eta))}
=(\mathfrak F_W)_{{\bf p}_{k+1}(\eta)}.
$$
Since $\mu\in\mathcal T^{k+1}_{x_0}$ is arbitrary, we now see that $\mathfrak F_{k+1}$ is a section in $({\bf p}_{k+1}\mathscr B)(\mathcal T^{k+1}_{x_0})$.
Similarly, from $F_{k+1}(\eta) =\mathcal F (\varphi(\eta))$, it follows that $F_{k+1}$ is holomorphic.

Finally, for $\mu\in\mathcal T^{k+1}_{x_0}$, let $\alpha=\pi(\mu)$, then
$$
{\bf p}_{k+1}(F_{k+1}(\mu))={\bf p}_{k+1}(F^\alpha(\mu))=(\mathfrak F^\alpha)_\mu({\bf p}_{k+1}(\mu))=(\mathfrak F_{k+1})_\mu({\bf p}_{k+1}(\mu)).
$$
Moreover, if $\nu\in\mathcal D^{k+1}_{x_0}$, then $\beta:=\pi(\nu)\in\mathcal D^k_{x_0}$ and
$$
(\mathfrak F_{k+1})_\nu=(\mathfrak F^\beta)_\nu=(\mathfrak F_k)_\beta=\mathfrak f.
$$
\end{proof}

Using Proposition~\ref{relating analytic continuation}, we can interpret the germs $(\mathfrak F_k)_\mu$ as analytic continuations of $\mathfrak f$ along continuous curves in ${\bf p}_k(\mathcal T^k_{x_0})\subset G/P$, as follows.

\begin{corollary}\label{k extension along curves}
Let $k\in\mathbb N^+$, $\nu\in\mathcal D^k_{x_0}\subset\mathcal T^k_{x_0}$, $\mu\in\mathcal T^k_{x_0}$ and $\Gamma:[0,1]\rightarrow\mathcal  T^k_{x_0}$ be a continuous curve such that $\Gamma(0)=\nu$ and $\Gamma(1)=\mu$. The germ $(\mathfrak F_k)_\mu\in ({\bf p}_k^{-1}\mathscr B)_\mu\cong\mathscr B_{{\bf p}_k(\mu)}$ is the analytic continuation of $\mathfrak f$ along the curve ${\bf p}_k\circ\Gamma$.
\end{corollary}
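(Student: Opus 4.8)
The plan is to obtain the statement as a formal consequence of Proposition~\ref{kth extension} and Proposition~\ref{relating analytic continuation}, so that no genuinely new argument is needed.

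First I would observe that since $\nu$ lies in the diagonal preimage $\mathcal D^k_{x_0}$ it in particular lies in ${\bf p}_k^{-1}(x_0)$, so the canonical identification $({\bf p}_k^{-1}\mathscr B)_\nu\cong\mathscr B_{{\bf p}_k(\nu)}$ reads $({\bf p}_k^{-1}\mathscr B)_\nu\cong\mathscr B_{x_0}$, and part~(ii) of Proposition~\ref{kth extension} gives $(\mathfrak F_k)_\nu=\mathfrak f$ under this identification. Next I would view the global section $\mathfrak F_k\in({\bf p}_k^{-1}\mathscr B)(\mathcal T^k_{x_0})$ as a continuous map from $\mathcal T^k_{x_0}$ to the espace \'etal\'e of ${\bf p}_k^{-1}\mathscr B$ (in the sense recalled in Section~\ref{sheaf}), lying over the identity. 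Composing with the curve $\Gamma$ then produces a continuous curve $t\mapsto(\mathfrak F_k)_{\Gamma(t)}$ in that espace \'etal\'e lying over $\Gamma$, whose value at $t=0$ is $(\mathfrak F_k)_\nu=\mathfrak f$ and whose value at $t=1$ is $(\mathfrak F_k)_\mu$. By the definition of analytic continuation for the inverse-image sheaf ${\bf p}_k^{-1}\mathscr B$, this exhibits $(\mathfrak F_k)_\mu$ as the analytic continuation of $\mathfrak f$ along $\Gamma$ in ${\bf p}_k^{-1}\mathscr B$.

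Finally I would apply Proposition~\ref{relating analytic continuation} with $M=\mathcal T^k_{x_0}$ and $g={\bf p}_k$: it transfers the analytic continuation just obtained, under the canonical identifications $({\bf p}_k^{-1}\mathscr B)_{\Gamma(t)}\cong\mathscr B_{{\bf p}_k(\Gamma(t))}$, to an analytic continuation of $\mathfrak f\in\mathscr B_{x_0}$ along $g\circ\Gamma={\bf p}_k\circ\Gamma$ in $\mathscr B$, with endpoint the image of $(\mathfrak F_k)_\mu$ in $\mathscr B_{{\bf p}_k(\mu)}$. This is precisely the assertion. I do not anticipate a real obstacle; the only care required is in keeping track of the two canonical identifications and in noting that, by the Hausdorffness of the espace \'etal\'e of $\mathscr B$ already observed in Section~\ref{sheaf}, the analytic continuation along ${\bf p}_k\circ\Gamma$ is unique, so that speaking of \emph{the} analytic continuation is legitimate.
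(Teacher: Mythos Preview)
Your proposal is correct and follows essentially the same approach as the paper: regard $\mathfrak F_k$ as a continuous section of the espace \'etal\'e, compose with $\Gamma$ to exhibit $(\mathfrak F_k)_\mu$ as the analytic continuation of $\mathfrak f$ along $\Gamma$ in ${\bf p}_k^{-1}\mathscr B$, and then invoke Proposition~\ref{relating analytic continuation} to pass to the continuation along ${\bf p}_k\circ\Gamma$ in $\mathscr B$. Your added remark on Hausdorffness justifying the definite article is a nice clarification but is not part of the paper's proof.
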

\begin{proof}
Regard the global section $\mathfrak F_k$ as a continuous map from $\mathcal T^k_{x_0}$ to the espace \'etal\'e of ${\bf p}_k^{-1}\mathscr B$, then $\mathfrak F_k\circ\Gamma$ is a lifting of $\Gamma$  such that $\mathfrak F_k\circ\Gamma(0)=\mathfrak F_k(\nu)=\mathfrak f$. Thus, $\mathfrak F_k(\Gamma(1))=\mathfrak F_k(\mu)=(\mathfrak F_k)_\mu$ is the analytic continuation of $\mathfrak f\in({\bf p}_k^{-1}\mathscr B)_{\nu}$ along $\Gamma$. By Proposition~\ref{relating analytic continuation}, $(\mathfrak F_k)_\mu$ is the analytic continuation of $\mathfrak f$ along ${\bf p}_k\circ\Gamma$.
\end{proof}

\noindent\textbf{Remark.} For the case $k=1$ in Corollary~\ref{k extension along curves}, if we let $\mu=(x,s)\in\mathcal T^1_{x_0}\subset G/P\times G/Q$ (see the end of Section~\ref{towersection}) and choose $\nu=(x_0,s)\in\mathcal D^1_{x_0}={\bf p}_1^{-1}(x_0)$, then we can find a continuous curve $\gamma$ in $\mathcal P_s$ connecting $x_0$ and $x$. Such $\gamma$ can always be lifted to a curve $\Gamma$ in ${\bf q}^{-1}(s)\subset{\bf q}^{-1}(\mathcal Q_{x_0})\cong\mathcal T^1_{x_0}$ connecting $\nu$ and $\mu$. Thus, for every $\mu\in\mathcal T^1_{x_0}$, the germ $(\mathfrak F_1)_\mu$ is the analytic continuation of $\mathfrak f$ along a curve contained in a $Q$-cycle passing through $x_0$.

Now we can give the proof for Theorem~\ref{mainthm1}.

\begin{proof}[Proof of Theorem~\ref{mainthm1}]
We can always take $G$ to be a semisimple complex Lie group which is a direct product of simple factors. By Proposition \ref{parameter space}, there exists a unique parabolic subgroup $Q'$ properly containing $Q$ such that $Q'$ is reduced mod $P$ and we have the following diagram for the associated fibrations:
{\small
\begin{eqnarray*}
\xymatrix{
  &G/(P\cap Q)\ar[dl] \ar[d]  \\
     G/Q\ar[d]^{\omega}& G/(P\cap Q')\ar[dl] \ar[dr]\\
		G/Q'&&G/P
}
\end{eqnarray*}
}

For such $Q'$, every $Q'$-cycle in $G/P$ is a $Q$-cycle and vice versa. In addition, $G/Q'$ effectively parameterizes the $Q'$-cycles (or $Q$-cycles). More explicitly, for $s, t\in G/Q$, we have $\mathcal P_s=\mathcal P_t$ if and only if $\omega(s)=\omega(t)$.
Since the $Q$-cycles and $Q'$-cycles on $G/P$ coincide, it suffices to prove the proposition for $P$ and $Q'$. Thus, without loss of generality, we can assume that $Q$ is reduced mod $P$.

Let $x_0\in G/P$ and $\mathfrak f\in\mathscr B_{x_0}$ be a germ of $Q$-cycle-respecting map. We first prove the theorem for the cases where $P$ is reduced mod $Q$.

By Proposition~\ref{connectivity}, there exists $N\in\mathbb N^+$ such that the evaluation map  ${\bf p}_N:\mathcal T^N_{x_0}\rightarrow G/P$ from the $N$-th $Q$-tower is surjective. Let $F_N:\mathcal T^N_{x_0}\rightarrow\mathcal T^N_{\mathfrak f(x_0)}$ be the biholomorphism and $\mathfrak F_N\in({\bf p}_N^{-1}\mathscr B)(\mathcal T^N_{x_0})$ be the global section obtained from $\mathfrak f$ in Proposition~\ref{kth extension}.

By Proposition~\ref{fiber connectedness}, for any $x\in G/P$, the preimage ${\bf p}_N^{-1}(x)$ is connected. Thus, it follows from Corollary~\ref{constant on fibers} that $(\mathfrak F_N)_\mu=(\mathfrak F_N)_\nu$ for every $\mu,\nu\in{\bf p}_N^{-1}(x)$. Using (i) of Proposition~\ref{kth extension}, we now see that
$$
{\bf p}_N(F_N(\mu))=(\mathfrak F_N)_\mu({\bf p}_N(\mu))
=(\mathfrak F_N)_\nu({\bf p}_N(\nu))={\bf p}_N(F_N(\nu))
$$
for every $\mu,\nu\in{\bf p}_N^{-1}(x)$. That is, the map ${\bf p}_N\circ F_N:\mathcal T^N_{x_0}\rightarrow G/P$ is constant on each preimage ${\bf p}_N^{-1}(x)$. Hence, $F_N$ descends to a holomorphic map $F:G/P\rightarrow G/P$, which is actually biholomorphic because the same arguments apply to $F_N^{-1}$, which clearly descends to the inverse of $F$.

We are now going to prove that $F$ indeed extends the germ $\mathfrak f$. First of all, note that by the definition of the inverse image sheaf ${\bf p}_N^{-1}\mathscr B$, the statement $(ii)$ in Proposition~\ref{kth extension} implies for each $\nu\in\mathcal D^N_{x_0}$, there are an open neighborhood $\mathcal V\subset\mathcal T^N_{x_0}$ of $\nu$ , an open neighborhood $V\subset G/P$ of ${\bf p}_N(\mathcal V)\ni x_0$ and a $Q$-cycle-respecting map $f:V\rightarrow G/P$ representing $\mathfrak f$, such that $(\mathfrak F_N)_\mu({\bf p}_N(\mu))=f({\bf p}_N(\mu))$ for every $\mu\in\mathcal V$. Together with statement $(i)$ in Proposition~\ref{kth extension}, we conclude that for every $x$ in the open set ${\bf p}_N(\mathcal V)\subset G/P$ and any $\mu\in{\bf p}_N^{-1}(x)$,
$$
F(x)={\bf p}_N(F_N(\mu))=(\mathfrak F_N)_\mu({\bf p}_N(\mu))=f({\bf p}_N(\mu))=f(x).
$$
Thus, $F$ extends $\mathfrak f$ and the proof is complete for the cases where $P$ is reduced mod $Q$.



It remains to consider the general case where $P$ is not reduced mod $Q$. Define an equivalent relation $\approx$ on $G/P$ by $x \approx y$ for $x,y \in G/P$ if and only if $\mathcal Q_x =\mathcal Q_y$, or equivalently, $\cap_{x \in \mathcal P_s} \mathcal P_s = \cap_{y \in \mathcal P_s} \mathcal P_s$. (One direction is clear: if $\mathcal Q_x = \mathcal Q_y$, then $\cap_{x \in \mathcal P_s} \mathcal P_s = \cap_{y \in \mathcal P_s} \mathcal P_s$. To prove the converse, suppose that  $\cap_{x \in \mathcal P_s} \mathcal P_s = \cap_{y \in \mathcal P_s} \mathcal P_s$. Since $y \in \cap_{y \in \mathcal P_s} \mathcal P_s$, $y$ is contained in $\cap_{x \in \mathcal P_s} \mathcal P_s $, i.e., $y$ is contained in $\mathcal P_s$ for all $s \in \mathcal Q_x$. Hence $\mathcal Q_x \subset \mathcal Q_y$ and thus $\mathcal Q_x =\mathcal Q_y$.) Then the relation $\approx$ is equivariant under the action of $G$, that is, for any $g \in G$, $x \approx y$ if and only if $gx \approx gy$. Hence there is a $G$-equivariant quotient map $\varpi$ from $G/P$ to the space $\tilde X $ of equivalent classes and, if we let $\tilde P $ be the isotropy group of $G$ at the equivalent class containing the base point of $G/P$ at which the isotropy group is $P$, then $P$ is contained in $\tilde P $ and we have $\tilde X  = G/\tilde P $. (In fact, $\tilde P$ is nothing but the reduction of $P$ mod $Q$.)
%
%
 For $s \in G/Q$ let $\tilde{\mathcal P}_s:=\varpi(\mathcal P_s)$. Then $\tilde{\mathcal P}_s$ are the $Q$-cycles on $G/\tilde P$. Now $\tilde x \in G/\tilde P$ is determined by $\mathcal Q_{\tilde x}$. 

Let $f:V_1 \rightarrow V_2$ be a local biholomorphism satisfying the condition ($\dag$) in the proof of Proposition~\ref{firstextension}. Then $f$ induces a biholomorphism  $\tilde f :\tilde V_1:=\varpi(V_1) \rightarrow \tilde V_2:=\varpi(V_2)$, and $\tilde f$ satisfies the condition ($\dag$) with respect to the family $\{\tilde{\mathcal P}_s: s \in G/Q\}$. Note that we still have $Q$-cycle-connectivity for the $Q$-cycles on $G/\tilde P$ since such property is inherited by $G$-equivariant projection. Now $P$ is also reduced mod $Q$, so we conclude that $\tilde f$ extends to a biholomorphism $\tilde F: G/\tilde P \rightarrow G/\tilde P$ by the case we settled before.

The automorphism $\tilde F$ is given by some element of $G$ except in the following cases:
\begin{enumerate}
\item $G/\tilde P=\mathbb {CP}^{2n-1}$ and $G=Sp(n, \mathbb C)$;
\item $G/\tilde P$ is a spinor variety and $G=SO(2n+1, \mathbb C)$;
\item $G/\tilde P=\mathbb Q^5$ and $G=G_2$.
\end{enumerate}

If $G/\tilde P$ is not one of (1) - (3), then $\tilde F$ comes from an element $g\in G$ and thus $f$ is also a restriction of the automorphism of $G/P$ given by the same $g$.

If $G/\tilde P$ is   one of (1) - (3), then the marking $\phi_{\tilde P}$ associated to $\tilde P$ on the Dynkin diagram of $G$ consists of only one element which is an end of the diagram.
Let $\phi_Q$ be the corresponding marking for $Q$. Since  $ \phi_{\tilde P} $ is the reduction of $\phi_P$ mod $\phi_Q$, it is contained in $\phi_P$. If $\phi_{\tilde P}\subsetneq\phi_P$, then there is a connected subdiagram of $G$ containing both a node in $\phi_P - \phi_{\tilde P}$ and a node in $\phi_Q$ which does not contain any element in $\phi_{\tilde P}$, contradicting to the fact that $\phi_{\tilde P}$ is the reduction of $\phi_P$ mod $\phi_Q$. Therefore, $\phi_{\tilde P} =\phi_P$. Hence, $G/\tilde P=G/P$ and $\tilde F$ is an extension of $f$.
%
\end{proof}

\section{Local biholomorphisms preserving real group orbits}


\subsection{Real group orbits of flag type}\label{flag type section}$\,$

 Let $G/P$ be a rational homogeneous space and let $G_0$ be a  real form of $G$, i.e. $G_0$ is the real analytic subgroup of $G$ corresponding to a real Lie subalgebra $\mathfrak g_0\subset\mathfrak g$ such that $\mathfrak g_0\oplus J\mathfrak g_0=\mathfrak g$, where $\mathfrak g$ is the Lie algebra of $G$ and $J$ is the complex structure operator. Let $\tau$ denote the complex conjugation of   $G$ over $G_0$.

We now collect some foundational results about the canonical action of $G_0$ on $G/P$ established by Wolf~\cite{W69}. First of all, there are only a finite number of orbits, which will be called \textit{real group orbits}. In particular, there are open orbit(s). Furthermore, it is also known that there is a unique closed orbit.

Let $\mathcal O$ be a $G_0$-orbit in $G/P$. A \textit{holomorphic arc} in $\mathcal O$ is a holomorphic map $f:\Delta\rightarrow G/P$ such that $f(\Delta)\subset\mathcal O$, where $\Delta$ is the unit disk in $\mathbb C$. By a \textit{chain of holomorphic arcs in} $\mathcal O$ \textit{connecting} two points $x, y$ in $\mathcal O$,  we mean a finite sequence of holomorphic arcs $f_1, \ldots, f_k$ in $\mathcal O$ such that $x \in  f_1(\Delta)$ and $y \in   f_k(\Delta)$ and $  f_i(\Delta) \cap  f_{i+1}(\Delta) \not=\emptyset$ for all $i=1, \ldots, k-1$. Define an equivalence relation on $\mathcal O$ as follows. Two elements $x,y\in\mathcal O$ are equivalent if and only if there is a chain of holomorphic arcs in $\mathcal O$ connecting $x$ and $y$.  An equivalence class of this equivalence relation on $\mathcal O$ is called a {\it holomorphic arc component} of $\mathcal O$. We remark that a holomorphic arc component is not necessarily a complex submanifold of $\mathcal O$.

Fix $x\in\mathcal O$ and let $C$ be the holomorphic arc component of $\mathcal O$ containing $x$. Denote by $N_0$ the identity component of the normalizer $N_{G_0}(C):=\{g \in G_0: gC  = C\}$ of $C $ in $G_0$. Then $C$ is an $N_0$-orbit (Lemma 8.2  of \cite{W69}). The complexification $N$ of $N_0 $ is a parabolic subgroup of $G$ such that $\tau N  = N $. Moreover, $N_{0}$ is the identity component of the parabolic subgroup $N \cap G_0$ of $G_0$ (Theorem 8.5 of \cite{W69}). Let $s\in G/N $ be the point whose isotropy group is $N$ and let $\Sigma$ be  the $G_0$-orbit containing $s$ in $G/N$. Since $\tau N  = N $  ,   $\Sigma$ is closed in $G/N$ and is totally real in the sense that $\Sigma$ is the set of real points of the complex projective variety $G/N$ defined over $\mathbb R$~(Theorem 3.6 of \cite{W69}).

\begin{definition}[\cite{W69}, Definition 9.1]\label{flag type}
A real group orbit $\mathcal O$ is said to be \textit{partially complex} if its holomorphic arc components are locally closed complex submanifolds of $G/P$; and of \textit{flag type} if for $x\in\mathcal O$, the orbit $Nx$ is a rational homogeneous space, i.e. if $N\cap P_x$ is parabolic, where $P_x\subset G$ is the isotropy group of $x$.
\end{definition}

\noindent\textbf{Remark.} The criterion for $\mathcal O$ being of flag type given in~\cite{W69} is slightly different from the one given here but they are equivalent.

As mentioned, if $C$ is the holomorphic arc component containing $x$, then $C=N_0x$. Therefore, $C$ is a real group orbit of the rational homogeneous space  $Nx$ if $\mathcal O$ is of flag type.

\begin{example}[see Remark 9.23 in \cite{W69}] The following are examples of $G_0$-orbits which are partially complex and of flag type. \label{example of flag type}
\begin{enumerate}
 \item The $G_0$-orbits in a compact Hermitian symmetric space $G/P$, where $G_0$ is the automorphism group of the bounded symmetric domain dual to $G/P$. It is well known that every holomorphic arc component is isomorphic to some bounded symmetric domain except for the closed orbit. \\

\item The orbits of any $SU(p,q)$ acting $Gr(n, \mathbb C^{2n})$, where $p+q=2n$ and $p<q$. For example, for the action of $SU(1,3)$ on $Gr(2,\mathbb C^4)$, the closed orbit has holomorphic arc components isomorphic to $\mathbb P^1$.
\end{enumerate}
\end{example}

\subsection{Holomorphic cover of subdiagram type}$\,$

Now we are going to apply our extension theorem for $Q$-cycle-respecting maps to study local holomorphic maps preserving a real group orbit on a rational homogeneous space $G/P$. Let $x_0\in G/P$ be the point whose isotropy group is $P$. Let $G_0\subset G$ be a real form and $\mathcal O$ be the $G_0$-orbit containing $x_0$. Suppose $Q \subset G$ is a parabolic subgroup such that $P \cap Q$ is parabolic. As before we have the double fibration $G/Q\overset{\bf q}\longleftarrow G/(P\cap Q)\overset{\bf p}\longrightarrow G/P$. Now let $\mathcal S$ be a $G_0$-orbit on $G/Q$ such that $x_0\in{\bf p}({\bf q}^{-1}(\mathcal S))$. That is, we have
\begin{eqnarray*}
\xymatrix{
  & G/(P\cap Q )\ar[dl]_{\mathbf q} \ar[dr]^{\mathbf p}  \\
     \mathcal S \subset G/Q &&G/P  \supset \mathcal O\ni x_0.
 }
\end{eqnarray*}


We begin with the following definition, which roughly describes the situation in which a real group orbit can be written as  a union of complex submanifolds lying on a family $Q$-cycles which are rigid under holomorphic mappings.

\begin{definition} \label{holomorphic cover}
We say that $\mathcal O$ has a \textit{holomorphic cover of $Q$-type}  if there exists a $G_0$-orbit $\mathcal S\subset G/Q$ such that,
for every $s\in\mathcal S$, there is a real group orbit $\mathcal O_s$ on $\mathcal P_s:={\bf p}({\bf q}^{-1}(s))$  satisfying

$(i)$ $\mathcal O=\bigcup_{s\in\mathcal S}\mathcal O_s$; and

$(ii)$ for each $s\in\mathcal S$ and any holomorphic map $h:W\rightarrow G/P$ defined on an open set $W\subset\mathcal P_s$ with $h(W\cap\mathcal O_s)\subset\mathcal O$, we have that each connected component of $h(W\cap\mathcal O_s)$ is contained in $\mathcal O_t$ for some $t\in\mathcal S$.


\end{definition}

\begin{definition}
We say that $\mathcal O$ has a {\it holomorphic cover of subdiagram type} if it has a holomorphic cover of $Q$-type for some parabolic subgroup $Q\subset G$; and such a cover is said to be \textit{non-trivial} if in addition $Q\not\subset P$ and $Q\neq G$.
\end{definition}

The following proposition is the major motivation for the definitions above.

\begin{proposition}
If a real group orbit $\mathcal O\subset G/P$ is partially complex and of flag type, then it has a holomorphic cover of subdiagram type.
\end{proposition}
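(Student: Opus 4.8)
The plan is to show that when $\mathcal O$ is partially complex and of flag type, the parabolic $N$ attached to a holomorphic arc component (as in Wolf's theory, recalled in Section~\ref{flag type section}) serves as the $Q$ in Definition~\ref{holomorphic cover}. More precisely, fix $x_0\in\mathcal O$, let $C$ be the holomorphic arc component through $x_0$, let $N$ be the complexification of $N_0=N_{G_0}(C)^\circ$, and set $Q:=N$. Since $\mathcal O$ is of flag type, $N\cap P$ is parabolic, so the double fibration $G/Q\overset{\bf q}\leftarrow G/(P\cap Q)\overset{\bf p}\rightarrow G/P$ is available and the $Q$-cycle $\mathcal P_s$ through $x_0$ is precisely the rational homogeneous space $Qx_0=Nx_0$. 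By the last paragraph of Section~\ref{flag type section}, $C=N_0x_0$ is a real group orbit of the compact manifold $Nx_0=\mathcal P_{s_0}$; this already gives us the $\mathcal O_s$ for the base cycle, namely $\mathcal O_{s_0}=C$. For a general cycle, recall from Section~\ref{flag type section} that the totally real $G_0$-orbit $\Sigma\subset G/N$ parametrizes the arc components of $\mathcal O$: I would take $\mathcal S:=\Sigma$, and for $s\in\mathcal S$ define $\mathcal O_s$ to be the holomorphic arc component of $\mathcal O$ corresponding to $s$, which by $G_0$-equivariance is again a real group orbit of $\mathcal P_s$.

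The first thing to verify is condition $(i)$, that $\mathcal O=\bigcup_{s\in\mathcal S}\mathcal O_s$. This is essentially the statement that $\mathcal O$ is the disjoint union of its holomorphic arc components together with the fact, from Wolf, that the arc components of $\mathcal O$ are exactly the $N_0$-orbits $g\cdot C$ for $g\in G_0$ and that these are indexed bijectively by the points of $\Sigma$. One should check that $\mathcal P_s$ for $s\in\Sigma$ is exactly the closure (inside $G/P$) of the complex submanifold $\overline{\mathcal O_s}$ swept out, i.e. that the arc component sits inside the corresponding $Q$-cycle as a real group orbit; this follows because $\mathcal O_s=gN_0x_0=gN_0g^{-1}\cdot(gx_0)$ lies in $gNx_0=\mathcal P_{{\bf q}(\text{lift of }gx_0)}$ and $g\in G_0$ maps $\Sigma$-parameters to $\Sigma$-parameters.

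Condition $(ii)$ is the substantive point and, I expect, the main obstacle. We must show: given $s\in\mathcal S$ and a holomorphic map $h\colon W\to G/P$ on an open $W\subset\mathcal P_s$ with $h(W\cap\mathcal O_s)\subset\mathcal O$, each connected component of $h(W\cap\mathcal O_s)$ lands in a single $\mathcal O_t$. The idea is that $W\cap\mathcal O_s$ is an open piece of the real group orbit $\mathcal O_s$ of $\mathcal P_s$, hence (being a holomorphic arc component, by partial complexity) a complex submanifold, and locally it is itself swept out by holomorphic arcs; each such arc is a holomorphic arc in $\mathcal O_s\subset\mathcal O$, so $h$ carries it to a holomorphic arc in $\mathcal O$. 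Holomorphic arcs in $\mathcal O$ are, by definition of holomorphic arc components, confined to a single arc component of $\mathcal O$, i.e. to a single $\mathcal O_t$. Since a connected component of $W\cap\mathcal O_s$ is arc-connected via such holomorphic arcs and $h$ is continuous, its image is contained in one arc component, hence in one $\mathcal O_t$. The delicate part here is making precise that a connected open piece of a holomorphic arc component is chain-connected by holomorphic arcs that themselves stay inside $\mathcal O_s$ — this uses that $\mathcal O_s$, being partially complex, has $C$ as a genuine complex submanifold so that its complex-analytic local structure provides such arcs — and checking that "connected component of $h(W\cap\mathcal O_s)$" behaves well, since $h(W\cap\mathcal O_s)$ need not be open; one argues instead with connected components of $W\cap\mathcal O_s$ and the fact that holomorphic-arc-equivalence on $\mathcal O$ is an equivalence relation, so the image of a holomorphic-arc-connected set is holomorphic-arc-connected, hence inside one $\mathcal O_t$. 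Once these points are assembled, Definition~\ref{holomorphic cover} is satisfied with $Q=N$, and the proposition follows.
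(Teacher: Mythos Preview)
Your proposal is correct and follows essentially the same approach as the paper: take $Q=N$, let $\mathcal S$ be the $G_0$-orbit $\Sigma\subset G/N$, set $\mathcal O_s$ equal to the corresponding holomorphic arc component, and for condition~$(ii)$ use that partial complexity makes $\mathcal O_s$ a genuine complex submanifold so that $h|_{W\cap\mathcal O_s}$ is holomorphic on a complex manifold with image in $\mathcal O$, hence lands in a single arc component. The paper's proof of $(ii)$ is simply the one-sentence version of the argument you spell out in detail.
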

\begin{proof}
Recall the notations in Section~\ref{flag type section}. Let $C\subset\mathcal O$ be a holomorphic arc component containing a point $x\in\mathcal O$ and $Q$ be the complexification of the normalizer of $C$ in $G_0$. We have known that $C\subset Qx$ is a real group orbit of $Qx$ which is a rational homogeneous space because $\mathcal O$ is of flag type. Furthermore, since $\mathcal O$ is partially complex, $C$ is a complex submanifold of $G/P$. Let $s\in G/Q$ be a point such that the corresponding $Q$-cycle is $Qx$. Define $\mathcal S:=G_0s\subset G/Q$ and for every $s'\in\mathcal S$, let $\mathcal P_{s'}:={\bf p}({\bf q}^{-1}(s'))$. Then if we write $s'=gs$, where $g\in G_0$, then $C_{s'}:=gC$ is independent of the choice of $g$ and also we have $C_{s'}\subset \mathcal P_{s'}\cap\mathcal O$, which is the holomorphic arc component containing $gx$.

It remains to verify that the family $\{\mathcal P_{s'}:s'\in\mathcal S\}$ of $Q$-cycles together with  the family $\{\mathcal O_{s'}:=C_{s'}\subset\mathcal P_{s'}:s'\in\mathcal S\}$ of real group orbits (which are complex submanifolds), is a holomorphic cover of $Q$-type for $\mathcal O$. Let $s'\in\mathcal S$ and $W\subset\mathcal P_{s'}$ be a connected open set, and $h:W\rightarrow G/P$ be a holomorphic map such that $h(W\cap\mathcal O_{s'})\subset\mathcal O$. Since $\mathcal O_{s'}$ is a complex submanifold, it follows that each connected component of $h(W\cap\mathcal O_{s'})$ is contained in some holomorphic arc component of $\mathcal O$ and thus in $\mathcal O_t$ for some $t\in\mathcal S$.
\end{proof}

\begin{example} \label{example} The following  real group orbits have a holomorphic cover of subdiagram type.
\begin{enumerate}
\item By the previous proposition, any real group orbit which is partially complex and of flag type has a holomorphic cover of $Q$-type, where $Q$ is the complexification of the normalizer of a holomorphic arc component of it.
Among the $G_0$-orbits in Example \ref{example of flag type}  (1), an orbit has a non-trivial holomorphic cover of subdiagram type if it is neither open nor closed. For the $G_0$-orbits in Example \ref{example of flag type}  (2), the closed orbit has a non-trivial holomorphic cover of subdiagram type.\\

\item  The closed orbit of $G_0=SU(p,q)$ on $Gr(d, \mathbb C^{p+q})$, where $d<\min(p,q)$, has a non-trivial holomorphic cover of subdiagram type (\cite{NG12}, Proposition 3.2 therein). In this case,  the holomorphic cover is of $Q$-type, where $Q$ is the maximal parabolic subgroup of $G=SL(p+q, \mathbb C)$ such that $G/Q=Gr(\min(p,q),\mathbb C^{p+q})$. Moreover, in this case we have $\mathcal O_s=\mathcal P_s\cong Gr(d,\min(p,q))$.

  \end{enumerate}
\end{example}

\subsection{Proof of Theorem~\ref{theorem extension of subdiagram type}}$\,$

We begin with the following simple observation that a real group orbit is always a set of uniqueness for holomorphic functions.

\begin{lemma}\label{subvariety lemma}
Let $\mathcal O$ be a real group orbit on $G/P$ for some real form $G_0\subset G$. Let $U\subset G/P$ be a connected open subset such that $U\cap\mathcal O\neq\emptyset$. Then there does not exist any proper complex analytic subvariety of $U$ containing $U\cap\mathcal O$.
\end{lemma}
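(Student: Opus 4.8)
The plan is to argue by contradiction: suppose $Z\subset U$ is a proper complex analytic subvariety with $U\cap\mathcal O\subset Z$. Since $\mathcal O$ is a $G_0$-orbit and $G_0$ is a real form, the key point is that $\mathcal O$ is a \emph{real-analytic} submanifold of $G/P$ whose Zariski closure (in the sense of the complex algebraic structure) is all of $G/P$; indeed, $\mathcal O$ is \emph{totally real-spanning} in the sense that at any point $x\in\mathcal O$ the real tangent space $T_x\mathcal O$ together with $J\,T_x\mathcal O$ spans the full complex tangent space $T_x(G/P)$ — this follows from the fact that $\mathfrak g_0 + J\mathfrak g_0 = \mathfrak g$, so the $G_0$-orbit directions already generate the $G$-orbit, which is open (in fact all of $G/P$). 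Hence $\mathcal O$ is a generic CR (indeed totally real of maximal real-spanning type, equivalently "maximally real") submanifold.

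First I would fix a point $x\in U\cap\mathcal O$, which lies in $Z$, and pass to a small coordinate polydisc $V\ni x$ in which $Z\cap V$ is cut out by finitely many holomorphic functions, one of which, say $\varphi$, is not identically zero on $V$. The goal is to derive $\varphi\equiv 0$, contradicting $Z\subsetneq U$ (after using irreducibility/connectedness of $U$ and the identity theorem to propagate). Restricting to $\mathcal O$: since $\varphi$ vanishes on $U\cap\mathcal O\supset V\cap\mathcal O$, the real-analytic function $\varphi|_{\mathcal O}$ vanishes identically near $x$. Now the decisive step is that a holomorphic function vanishing on a maximally real-spanning real-analytic submanifold must vanish identically: locally one can choose $G_0$-translates, i.e. real-analytic curves inside $\mathcal O$ through $x$ whose velocity vectors, together with the complex structure $J$, span $T_x(G/P)$ over $\mathbb R$; differentiating the relation $\varphi\circ(\text{curve})\equiv 0$ and using the Cauchy–Riemann equations for $\varphi$ forces all partial derivatives of $\varphi$ at $x$ to vanish, hence (being holomorphic) $\varphi$ vanishes to infinite order at $x$, so $\varphi\equiv 0$ on the connected polydisc $V$.

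Having shown every local defining function of $Z$ vanishes identically near each point of $U\cap\mathcal O$, and since $U$ is connected (and $G/P$, hence $U$, is reduced and locally irreducible), the identity theorem propagates this: $Z$ contains an open subset of $U$ and therefore $Z=U$, contradicting properness. Alternatively, and perhaps more cleanly, I would phrase the whole argument as: $U\setminus Z$ is open, dense (as complement of a proper subvariety), and connected; but $U\cap\mathcal O$ would then be a nonempty subset of the complement's closure yet disjoint from it — so it suffices to show $\mathcal O$ is not contained in any proper subvariety, which reduces exactly to the maximal-reality claim above.

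The main obstacle is establishing cleanly that $\mathcal O$ is maximally real-spanning (the CR statement that its tangent directions plus $J$ of them span), and then invoking the classical fact that holomorphic functions vanishing on such a set vanish identically; both are standard but the first requires the little Lie-theoretic computation that the $G_0$-orbit through $x$ has, at $x$, tangent space $\mathfrak g_0/(\mathfrak g_0\cap \mathfrak p_x)$ whose $\mathbb R$-span together with its $J$-image is $\mathfrak g/\mathfrak p_x = T_x(G/P)$, which is immediate from $\mathfrak g_0 + J\mathfrak g_0 = \mathfrak g$. Everything else is a routine application of the identity theorem.
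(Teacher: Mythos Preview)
Your proposal is correct and takes essentially the same approach as the paper: both reduce the lemma to the Lie-theoretic computation that $T_x\mathcal O + J\,T_x\mathcal O \cong (\mathfrak g_0 + J\mathfrak g_0 + \mathfrak p_x)/\mathfrak p_x = \mathfrak g/\mathfrak p_x = T_x(G/P)$, i.e.\ that $\mathcal O$ is a generic (maximally real-spanning) CR submanifold. The paper simply observes from this that $T_x\mathcal O$ is not contained in any proper $J$-invariant subspace and concludes immediately, whereas you spell out the consequence via local defining functions and the identity theorem; the extra detail is fine but not needed.
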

\begin{proof}
Let $x\in \mathcal O$, $T_x(\mathcal O)$ be the real tangent space of $\mathcal O$ at $x$ and $J$ be the complex structure operator.  Note that we have the canonical isomorphisms $T_x(\mathcal O)\cong \mathfrak g_0/(\mathfrak g_0\cap\mathfrak p_x)\cong(\mathfrak g_0+\mathfrak p_x)/\mathfrak p_x$, where $\mathfrak p_x\subset\mathfrak g$ is the Lie algebra of the isotropy group $P_x$ at $x$, then
\begin{eqnarray*}
T_x(\mathcal O)+ J(T_x(\mathcal O))&\cong& (\mathfrak g_0+\mathfrak p_x)/\mathfrak p_x+J(\mathfrak g_0+\mathfrak p_x)/\mathfrak p_x\\
&=&(\mathfrak g_0+J\mathfrak g_o+\mathfrak p_x)/\mathfrak p_x\\
&=&\mathfrak g/\mathfrak p_x\cong T_x(G/P)
\end{eqnarray*}
Hence, we see that $T_x(\mathcal O)$ cannot be contained in a $J$-invariant proper tangent subspace at $x$ and the desired statement follows.
\end{proof}

 \begin{proposition} \label{holomorphic function vanishing on real points}
Let $\mathcal O$ be a real group orbit on $G/P$. Suppose $U\subset G/P$ is a connected open subset such that $U \cap \mathcal O \not=\emptyset$ and $u: U \rightarrow \mathbb C$ is a holomorphic function such that $u|_{U \cap\mathcal O} \equiv 0$, then $u$ is identically zero on $U$.
 \end{proposition}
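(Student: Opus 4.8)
The plan is to deduce this immediately from Lemma~\ref{subvariety lemma}. I would consider the zero locus $Z:=u^{-1}(0)\subset U$. Since $u$ is holomorphic, $Z$ is a closed complex analytic subset of $U$, and by hypothesis $U\cap\mathcal O\subset Z$. Now I would argue by contradiction: suppose $u$ does not vanish identically on $U$. Because $U$ is connected, a nonzero holomorphic function on $U$ cannot vanish on any nonempty open subset, so $Z$ has empty interior; in particular $Z$ is a \emph{proper} complex analytic subvariety of $U$. But then $Z$ is a proper complex analytic subvariety of $U$ containing $U\cap\mathcal O$, which contradicts Lemma~\ref{subvariety lemma}. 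Hence $u\equiv 0$ on $U$.

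There is essentially no obstacle here: all the content sits in Lemma~\ref{subvariety lemma}, whose proof exploits that $T_x(\mathcal O)+J\,T_x(\mathcal O)=T_x(G/P)$, so that $\mathcal O$ cannot be locally contained in any proper complex analytic subvariety. The only points deserving a word of care are that the connectedness of $U$ is exactly what forces $Z$ to be proper once $u\not\equiv 0$, and that ``proper complex analytic subvariety'' in Lemma~\ref{subvariety lemma} is to be read as a closed analytic subset strictly smaller than $U$ — which is precisely what $Z$ is under the contradiction hypothesis.
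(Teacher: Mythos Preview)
Your proposal is correct and takes essentially the same approach as the paper: both invoke Lemma~\ref{subvariety lemma} by observing that the zero set of a nonzero holomorphic function on the connected open set $U$ is a proper complex analytic subvariety, which cannot contain $U\cap\mathcal O$. Your write-up simply spells out in a bit more detail what the paper compresses into a single sentence.
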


\begin{proof}
It follows directly from Lemma~\ref{subvariety lemma} since the zero set of any non-trivial holomorphic function on $U$ is a proper complex analytic subvariety in $U$.
\end{proof}

\begin{proof}[Proof of Theorem \ref{theorem extension of subdiagram type}]
Let $\mathcal O$ be a $G_0$-orbit in $G/P$   having a non-trivial holomorphic cover of subdiagram type. Then  there is a parabolic subgroup $Q \subsetneq G$ such that $P \cap Q$ is parabolic and we have the double fibration
\begin{eqnarray*}
\xymatrix{
  &G/(P \cap Q )\ar[dl]_{\mathbf q} \ar[dr]^{\mathbf p}  \\
     \mathcal S \subset G/Q &&G/P  \supset \mathcal O
 }
\end{eqnarray*}
where $\mathcal S \subset G/Q$ is a real group orbit,
and a family $\{\mathcal O_s\}_{s \in \mathcal S}$ of real group orbits of  $\mathcal P_s:={\bf p}({\bf q}^{-1}(s))$ such that
 \begin{enumerate}
 \item  $\mathcal O=\bigcup_{s\in\mathcal S}\mathcal O_s$;

 \item  for each $s\in\mathcal S$ and any holomorphic map $h:W\rightarrow G/P$ defined on an open set $W\subset\mathcal P_s$, satisfying $h(W\cap\mathcal O_s)\subset\mathcal O$, we have that each connected component of $h(W\cap\mathcal O_s)$ is contained in $\mathcal O_t$ for some $t\in\mathcal S$.
\end{enumerate}
 Replacing $U$ by a smaller open subset if necessary, we may assume that $\mathcal P_s \cap U$ is connected for any $\mathcal P_s$ intersecting $U$ (see the proof of Proposition \ref{good neighborhood}).

Since  $f$ is a biholomorphism such that $f(U \cap \mathcal O) \subset \mathcal O$, if we let $s\in\mathcal S$ and fix a connected component of $ \mathcal O_s   \cap U$, its image  under $f$   is contained in some $\mathcal O_t\subset\mathcal P_{t}$   by condition (2).
Now, since $\mathcal O_s$ is a real group orbit of $\mathcal P_s$, following directly from Proposition \ref{holomorphic function vanishing on real points}, we deduce that the image of    a connected component of $ \mathcal P_s \cap U$ under $f$ is also contained in some  $\mathcal P_{t}$.
We are going to show that  this is true for every $Q$-cycle intersecting $U$, i.e., $f:U \subset G/P \rightarrow G/P$ is in fact $Q$-cycle-respecting.

As in the proof of Theorem \ref{mainthm1}, identify the universal family $\mathcal U=G/(P \cap Q)$ with a closed complex submanifold of the Grassmannian bundle $ Gr(k, T(G/P)) $, where $k=\dim_{\mathbb C}(\mathcal P_s)$. Let $V:=f(U)$. Then the differential $df$ induces a map $[df]:Gr(k, T(U)) \rightarrow Gr(k,T(V))$. We claim that

\begin{enumerate}
\item [{\rm (a)}] $[df]$ maps ${\mathbf p}^{-1}(U) $   to ${\mathbf p}^{-1}(V)$;
\item  [{\rm (b)}] $[df]:{\mathbf p}^{-1}(U) \rightarrow {\mathbf p}^{-1}(V)$ is fiber-preserving with respect to $\mathbf q$, i.e., it sends   a fiber  of $\mathbf q$ to a fiber of $\mathbf q$.
\end{enumerate}

Since ${\mathbf p}^{-1}(V) \subset Gr(k, T(V))$ is a closed complex submanifold, to say that $[df]({\mathbf p}^{-1}(U)) \subset {\mathbf p}^{-1}(V)$ is the same as saying that the pullbacks of the local defining holomorphic functions for ${\mathbf p}^{-1}(V)$ vanish on ${\mathbf p}^{-1}(U)$.

Let $(s_0,x_0) \in {\mathbf p}^{-1}(U)$, where $x_0\in G/P$ (respectively,  $s_0 \in G/Q$) is the base point of $G/P$ with the isotropy group $P$ (respectively, $G/Q$ with the isotropy group $Q$).
Take connected open neighborhoods $\mathscr S \subset G/Q$ of $s_0$ and $\chi \subset \mathcal O_{s_0}$ of $x_0$ so that $\mathscr S \times \chi$ can be embedded into ${\mathbf p}^{-1}(U)$  as an open neighborhood of $(s_0,x_0 )$ with ${\mathbf q}(\mathscr S \times \chi )=\mathscr S$. For each $x  \in \chi$ consider the intersection ${\mathbf q}^{-1}(\mathcal S) \cap (\mathscr S \times \{x \})=(\mathcal S \cap \mathscr S) \times \{x \}$. We observe that each point $(s,x ) \in (\mathcal S \cap \mathscr S) \times \{x \}$ as a point in the universal family, corresponds to a holomorphic tangent space of $\mathcal P_s$ at $x $ (since $\mathcal S$ parametrizes such family). Furthermore, as proven above, the image  $f(\mathcal P_s \cap U)$ is contained in some $\mathcal P_t  $. Hence we have
$$[df]((\mathcal S \cap \mathscr S) \times \{x \}) \subset {\mathbf p}^{-1}(V).$$

Now the pullbacks of the local defining functions of ${\bf p}^{-1}(V)$ vanish on $(\mathcal S \cap \mathscr S) \times \{x \}$ for every $x  \in \chi$.  Proposition \ref{holomorphic function vanishing on real points}  implies that these holomorphic functions vanish identically on $\mathscr S \times \{x \}$ for every $x  \in \chi$ and hence on an open neighborhood in ${\mathbf p}^{-1}(U)$. Therefore, $[df]({\mathbf p}^{-1}(U)) \subset {\mathbf p}^{-1}(V)$. 

To see (b)  we first consider the case where $s \in \mathcal S \cap  U^{\sharp}$, where $U^{\sharp}:={\mathbf q}({\mathbf p}^{-1}(U))$. By the arguments at the beginning of the proof,
  we know that the fibers of $\mathbf q$ over $\mathcal S \cap  U^{\sharp}$ are preserved by $[df]$.
  On the other hand, the fiber-preserving property can clearly be translated to the vanishing of a set of holomorphic functions defined on $U^{\sharp}$. We have just seen that these relevant holomorphic functions vanish on $\mathcal S \cap U^{\sharp}$ and Proposition~\ref{holomorphic function vanishing on real points} once again implies that they vanish identically on $ U^{\sharp}$ and therefore $[df]$ is everywhere fiber-preserving with respect to $\mathbf q$, which simply translates to the statement that $f$ is $Q$-cycle-respecting.

We have thus shown that $f$ is a $Q$-cycle-respecting local biholomorphism and by Theorem \ref{mainthm1}, $f$ extends to a biholomorphism of $G/P$.
\end{proof}

\section{Local biholomorphisms respecting tangent spaces of $Q$-cycles}\label{last section}

Recall that $G/(P \cap Q)$ can be regarded as a closed complex submanifold $\mathcal C$ of $Gr(k, T(G/P))$, namely, as the variety of tangent spaces of $Q$-cycles on $G/P$.  We may ask whether we can have an extension theorem parallel to the original Cartan-Fubini extension Theorem for minimal rational curves. That is, does a local biholomorphism that is only known to respect   tangent spaces of the $Q$-cycles extend to a global biholomorphism? In this section we will show that the answer is affirmative when $G/P$ is of Picard number one and give a proof of Theorem~\ref{cartan-fubini}.

The question is whether the preservation of the variety of tangent spaces of $Q$-cycles implies the $Q$-cycle-respecting property. It can be rephrased in the following way.

\noindent \textbf{Question.}   Let $S \subset G/P$ be a locally closed, connected complex submanifold which is an integral variety of $\mathcal C\subset Gr(k, T(G/P))$, i.e., for every $p\in S$, there is a $Q$-cycle, depending on $p$, sharing the same tangent subspace with $S$ at $p$. Is $S$ itself an open subset of a $Q$-cycle?

 An affirmative answer to the above question is a sufficient to prove Theorem~\ref{cartan-fubini}. However, in the case of minimal rational curves, i.e. when the $Q$-cycles are lines, then there exists some holomorphic map $h:\mathbb P^1 \rightarrow G/P$ such that $h(\mathbb P^1)$ is an integral variety of $\mathcal C \subset Gr(1, T(G/P))=\mathbb P(T(G/P))$ and $h(\mathbb P^1)$ is not a line (Section 6 of \cite{CH}). Nevertheless, the Cartan-Fubini extension for minimal rational curves still holds.

If $Q$-cycles are either non-linear or a maximal linear subspace,
the answer to above question is affirmative when $G/P$ is of Picard number 1 with some exceptions, described as follows.

\begin{proposition}  \label{integral varieties} Let $G/P$ be a rational homogeneous space of Picard number 1 and let $Q\subset G$ be a parabolic subgroup such that $P\cap Q$ is parabolic. Assume that the $Q$-cycles are either maximal linear subspaces of $G/P$ or non-linear homogeneous subspaces.
If $S\subset G/P$ is a locally closed, connected complex submanifold which is an integral variety of $G/(P \cap Q)\subset Gr(k, T(G/P))$, then $S$ is an open subset of a $Q$-cycle unless
 \begin{enumerate}
 \item $G/P$ is of type $(B_{\ell}, \alpha_i)$ and  $Q$    is associated with $\{\alpha_{i-1}, \alpha_{\ell}\}$;

 \item $G/P$ is of type $(C_{\ell}, \alpha_{\ell})$ and  $Q$  is associated with  $\{\alpha_{\ell-1} \}$;

 \item $G/P$ is of type $(F_{4},\alpha_1)$   and  $Q$  is associated with  $\{\alpha_{3} \}$;

 \item $G/P$ is of type $(G_{2},\alpha_2)$   and  $Q$  is associated with  $\{\alpha_{1} \}$.
 \end{enumerate}
\end{proposition}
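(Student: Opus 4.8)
The plan is to deduce the proposition from the Schubert rigidity results of Mok--Zhang~\cite{MokZhang}, after recasting the integral-variety hypothesis in the form they treat. By Proposition~\ref{identification}, the universal family $\mathcal C := G/(P\cap Q)$ sits inside $Gr(k, T(G/P))$ as the variety of tangent spaces of $Q$-cycles, and the fiber of $\mathcal C$ over a point $x \in G/P$ is the homogeneous subvariety $\{\,T_x\mathcal P_s : x \in \mathcal P_s\,\} \subset Gr(k, T_x(G/P))$, canonically identified with the $P$-cycle $\mathcal Q_x \subset G/Q$. To say that $S$ is an integral variety of $\mathcal C$ is exactly to say that its Gauss lift $p \mapsto (p, T_pS)$ takes values in $\mathcal C$; differentiating this relation once, the second fundamental form of $S$ at each $p$ is constrained to lie in the subspace of $\mathrm{Sym}^2 T_p^*S \otimes \bigl(T_p(G/P)/T_pS\bigr)$ cut out by the tangent space of $\mathcal C$ at $(p, T_pS)$. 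This is precisely the first-order differential system whose integral manifolds are analysed in~\cite{MokZhang}, and the $Q$-cycles are its obvious solutions.

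The content of the proposition is then the uniqueness of these solutions. Fix $p \in S$ and a $Q$-cycle $\mathcal P_0$ with $T_p\mathcal P_0 = T_pS$. Mok--Zhang's study of the above system --- by repeatedly prolonging it and computing the Lie-algebra cohomology that governs the failure of uniqueness of integral varieties through a fixed point with fixed tangent --- shows that when the $Q$-cycles are non-linear, or are maximal linear subspaces of $G/P$, the system is rigid, in the sense that any germ at $p$ of an integral variety tangent to $\mathcal P_0$ agrees with $\mathcal P_0$ near $p$; the only exceptions, where that cohomology is non-zero and a strictly larger family of integral varieties occurs, are the four families of pairs $(G/P, Q)$ listed in the statement. (The non-maximal linear case genuinely has to be excluded: in $\mathbb P^n$, for instance, every $k$-dimensional submanifold is an integral variety of the structure defined by the linear $\mathbb P^k$'s, so no such rigidity can hold there.) Granting the rigidity, the connectedness of $S$ together with the fact recalled in the proof of Proposition~\ref{identification} --- that a $Q$-cycle is determined by its tangent space at a single point --- forces $S \subset \mathcal P_0$, so $S$ is an open subset of a $Q$-cycle.

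The part that is genuinely our responsibility, and the step I expect to be the main obstacle, is matching the double fibration $G/Q \leftarrow G/(P\cap Q) \rightarrow G/P$ to the precise framework in which the Schubert rigidity theorem of~\cite{MokZhang} is stated, and doing the bookkeeping on the Dynkin diagram of $G$ to determine, marking by marking, which choices of $Q$ yield non-linear $Q$-cycles and which yield maximal linear ones --- so that the hypotheses of the cited theorem are actually met outside the four exceptional configurations and so that the exceptional list reproduced here coincides with theirs. The geometric and differential-systems core of the argument is supplied entirely by~\cite{MokZhang}.
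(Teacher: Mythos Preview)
Your proposal is correct and takes essentially the same approach as the paper: both deduce the result directly from the Schubert rigidity theorems of Mok--Zhang~\cite{MokZhang}. The paper's own proof is in fact far terser than yours---it simply records that the statement is contained in Corollary~1.2 and Lemma~1.2 of~\cite{MokZhang}---so the translation and Dynkin-diagram bookkeeping you anticipate as ``the main obstacle'' is already absorbed into that citation and need not be spelled out.
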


\begin{proof}
The desired statement is included in Corollary 1.2 and Lemma 1.2 of \cite{MokZhang}.
 \end{proof}

 \noindent \textit{Proof of Theorem~\ref{cartan-fubini}}.
If the $Q$-cycles are either non-linear homogeneous subspaces or maximal linear subspaces of $G/P$ other than the exceptions listed in Proposition \ref{integral varieties}, any local biholomorphism $f:U \rightarrow f(U)\subset G/P$ preserving the variety of tangent spaces of $Q$-cycles is $Q$-cycle-respecting by Proposition \ref{integral varieties}. By Theorem \ref{mainthm1}, $f$ can be extended to a biholomorphism of $G/P$.

If the $Q$-cycles are non-maximal linear subspaces of $G/P$ or maximal linear spaces in the list of  Proposition \ref{integral varieties}, then the preservation of the variety of tangent spaces of $Q$-cycles implies the preservation of the variety of tangent spaces of {\it invariant} lines (see the remark below), and thus, by Corollary 5.4 of \cite{Y}, $f$ can be extended to a biholomorphism of $G/P$.
 \qed

\noindent \textbf{Remark.} If $P$ is associated to a long root, then lines in $G/P$ are $Q$-cycles for some $Q$. However, if $P$ is associated to a short root, then   generic lines   are not $Q$-cycles. By an {\it invariant} line we mean a line that is a $Q$-cycle.
The Cartan-Fubini extension Theorem of Hwang-Mok~\cite{HM01} can be applied to a local biholomorphism preserving the set of tangent directions of lines in  $G/P$, while Corollary 5.4 of \cite{Y} can be applied to a local biholomorphism preserving the set of tangent directions of invariant lines in $G/P$.

$\,$

\noindent\textbf{Acknowledgements.} We would like to thank Professor Jun-Muk Hwang for raising a question about the univalence of extension which led us to discover a gap in the original proof of Theorem~\ref{mainthm1}. We are very grateful to Professor Ngaiming Mok for a lot of precious advice regarding the construction of $Q$-towers. We also like to thank Professor Joeseph Wolf for helpful comments.

\end{document}